\let\SIAMkernelLabel\label
\let\SIAMkernelRefstepcounter\refstepcounter
\AddToHook{package/hyperref/before}{%
  \let\label\SIAMkernelLabel
  \let\refstepcounter\SIAMkernelRefstepcounter
}
\documentclass[final,onefignum,onetabnum]{siamart171218}

\usepackage{lipsum}
\usepackage{amsfonts}
\usepackage{graphicx}
\usepackage{algorithmic}
\usepackage{amsmath}
\usepackage{mathtools}
\usepackage{tkz-graph}
 \DeclareGraphicsExtensions{.pdf}

\let\norm\undefined 
\DeclarePairedDelimiter\norm{\lVert}{\rVert}

\newsiamremark{remark}{Remark}
\newsiamremark{assumption}{Assumption}
\newsiamremark{hypothesis}{Hypothesis}
\crefname{hypothesis}{Hypothesis}{Hypotheses}
\newsiamthm{claim}{Claim}

\headers{Regularization of Primal-Dual Dynamics}{M. Vaquero and J. Cort\'es}

\title{On the Regularization of Primal-Dual Dynamics in time-dependent non-convex Optimization}

\author{Miguel Vaquero\thanks{School of Science and Technology, IE University,
Madrid, Spain (\email{miguel.vaquero@ie.edu}).}
\and Jorge Cort\'es\thanks{Department of Mechanical and Aerospace Engineering,
University of California San Diego, La Jolla, CA 92093, USA
(\email{cortes@ucsd.edu}).}}

\usepackage{amsopn}

\usepackage{booktabs}
\usepackage{tabularx}
\usepackage{placeins}
\usetikzlibrary{positioning,arrows.meta,fit,backgrounds,calc}
\newcolumntype{Y}{>{\raggedright\arraybackslash}X}
\definecolor{guideblue}{RGB}{35,78,121}
\definecolor{guideorange}{RGB}{184,92,38}
\definecolor{guidegreen}{RGB}{48,122,88}
\newcommand{\auditfix}[1]{#1}
\newcommand{\greenfix}[1]{#1}
\title{On the Regularization of Primal--Dual Dynamics in Time-Dependent Nonconvex Optimization}

\ifpdf 
\hypersetup{
  pdftitle={On the Regularization of Primal-Dual Dynamics in Time-Dependent Nonconvex Optimization},
  pdfauthor={Miguel Vaquero and Jorge Cort\'es},
  colorlinks=true,
  linkcolor=black,
  citecolor=black,
  urlcolor=black
}
\fi

\newcommand{\real}{\mathbb{R}}
\newcommand{\realnonneg}{\mathbb{R}_{\ge 0}}
\newcommand{\realpos}{\mathbb{R}_{> 0}}
\newcommand{\until}[1]{\{1,\dots,#1\}}
\newcommand{\Lc}{\mathcal{L}}

\newcommand{\inv}[1]{{#1}^{\operatorname{imp}}}

\newcommand{\oprocendsymbol}{\hbox{$\bullet$}}
\newcommand{\oprocend}{\relax\ifmmode\else\unskip\hfill\fi\oprocendsymbol}

\allowdisplaybreaks

\begin{document}

\maketitle
    
\begin{abstract}
We study continuous-time primal--dual dynamics for tracking solutions of
time-dependent constrained optimization problems. First, for uniformly
strongly convex--strongly concave saddle functions, we establish explicit
finite-time tracking and additive-disturbance bounds. For convex problems,
we then introduce a directional primal regularization, combined with dual
Tikhonov regularization, and quantify the resulting optimization bias and
constraint violation. For smooth nonconvex problems, we couple a proximal
local convexification with a two-time-scale primal--dual flow and obtain a
local tracking result under constraint qualification, second-order
sufficiency, and temporal regularity. Numerical traffic-signal experiments at a single intersection
evaluate the proposed method  under vehicle-throughput and
pedestrian-aware objectives, comparing it with standard signal-control
schemes.
\end{abstract}

\begin{keywords}
  saddle-point dynamics, primal-dual algorithms, nonconvex optimization,
  dynamical systems, non-smooth analysis, Implicit Function Theorem.
\end{keywords}

\begin{AMS}
34A34, 
49K15, 
49K40, 
65K05.
\end{AMS}
\section{Introduction}
\paragraph{Problem and motivation}
 Traffic control, power-system operation, and resource allocation often lead to optimization problems that are both time-varying and nonconvex. As the data evolve, an optimizer may move, but successive problem instances are usually related: a solution found at one time can guide the search for the next. This motivates dynamical optimization methods that track a moving solution.

For the sake of concreteness, consider traffic-signal control at a single intersection. A signal cycle is one complete sequence of the intersection’s signal stages. For each cycle, the controller chooses the duration of each stage, subject to a fixed total cycle time, to maximize the number of vehicles served. Because throughput depends on traffic demand and evolving queues, we do not assume an accurate closed-form relation between stage durations and vehicles served. Instead, we evaluate sampled timing plans with a traffic simulator and fit a differentiable surrogate to guide the optimization; this surrogate need not be convex. Queues left at the end of one cycle carry into the next, creating a sequence of related but changing problems. The timing plan found for one cycle can thus help find a solution for the next.

\paragraph{Primal–dual dynamics are a natural candidate for tracking solutions of such constrained problems} In the convex setting, equilibria of primal–dual dynamics are saddle points of the associated Lagrangian and correspond to primal–dual solutions of the optimization problem. Quantitative tracking estimates hold when the saddle function is  strongly convex in the primal variables and strongly concave in the dual variables. Two difficulties prevent this strongly convex–strongly concave theory from applying directly to the class of problems considered here. First, standard optimization Lagrangians are affine in the dual multipliers and therefore lack the strong concavity required by the tracking analysis; even for a two-variable linear program, the corresponding classical saddle flow can oscillate persistently rather than converge~\cite{TH-IL:14,TH-IL:21}. Second, in a nonconvex constrained problem, a KKT solution generally admits only a local saddle-point interpretation. These two difficulties motivate the regularization and local convexification developed in this paper.

\subsection{Statement of contributions}
The paper addresses four questions, moving from time-varying saddle
flows to convex and nonconvex optimization and then to simulation-based
traffic control.

  \begin{itemize}
  \item \emph{How closely can a primal--dual flow follow a moving saddle
  point?} We consider a time-varying saddle function that is uniformly
  strongly convex in the primal variables and strongly concave in the dual
  variables. When its saddle point moves at bounded speed, we bound the
  tracking error between the primal--dual state and the current saddle point in terms of the initial error, the saddle point's motion,
  and disturbances to the flow.
  
   \item \emph{Can primal--dual dynamics track an optimizer of a
  time-varying convex program while limiting the change caused by
  regularization?} The standard (unregularized) Lagrangian used in optimization does not satisfy the strong
  convexity and strong concavity conditions used in our first result.
  Standard Tikhonov regularization restores these properties but can
  move the optimizer. We instead choose a regularization that exploits the
  structure of the equality constraints. We prove tracking and robustness
  bounds and quantify the resulting changes in the optimizer, objective
  value, and feasibility. In our convex numerical tests, this approach
  gives smaller errors relative to the original optimizer while retaining
  comparable robustness to disturbances.

  \item \emph{Can primal--dual dynamics stay near a moving local
  optimizer of a nonconvex problem?} Such an optimizer need not be a
  global Lagrangian saddle point. We use local convexification near the
  moving optimizer and bound how closely the resulting dynamics follow
  it, provided the solution path satisfies suitable regularity
  conditions and the dynamics start nearby the initial optimizer.

  \item \emph{Can the method use an objective available only through
  simulation?} In a traffic-signal experiment, we fit a differentiable
surrogate to SUMO (\cite{PAL-MB-LBW-JE-YE-YF-RH-LL-JR-PW-EW:18})
  simulations and compare the resulting timing plans with equal-split,
  Webster, and max-pressure controllers under vehicle-throughput and
  pedestrian-aware objectives.
  \end{itemize}

\section{Related work}\label{sec:related-work}

\subsection{Saddle flows and time-varying convex optimization}
Continuous-time primal--dual methods go back to the saddle-point dynamics
of Kose and Arrow, Hurwicz, and Uzawa
\cite{KA-LH-HU:58,TK:56}. For fixed convex programs, Lyapunov and invariance
arguments establish convergence of projected saddle flows under several
combinations of convexity, strict convexity, and constraint regularity
\cite{AC-BG-JC:17-sicon,AC-EM-JC:16-scl,DF-FP:10}. For strongly convex optimization problems with affine constraints, global exponential stability of augmented primal–dual gradient dynamics has been established under suitable assumptions \cite{JC-SKN:19-jnls,GQ-NL:19}.

Two viewpoints in the time-varying convex-optimization literature are
especially relevant here; see \cite{AS-ED-SP-GL-GBG:20} for a broad survey.
Running and prediction--correction methods sample the problem, take a
limited number of iterations at each sample, and bound the resulting
tracking error. Representative constrained methods include first-order
prediction--correction schemes~\cite{AS-ED:17-pc} and feedback-based
projected primal--dual iterations~\cite{AB-ED-AS:18}. A second viewpoint
embeds the optimizer in a continuous-time system. Prediction--correction
interior-point dynamics can obtain vanishing error by using temporal
derivatives of the problem data~\cite{MF-SP-VMP-AR:18}. Without such a
feedforward term, contraction analysis yields an error floor proportional
to the motion of the optimizer. This has been shown for centralized and
distributed equality-constrained primal--dual dynamics
\cite{PCV-SJ-FB:20} and, more generally, for monotone inclusions,
equality-constrained programs, and composite minimization
\cite{AD-VC-AG-GR-FB:25}. Projected primal--dual systems for strongly
convex problems with time-varying inequality constraints are studied in
\cite{RR-ADM-UV:24}, while the dynamic embedding in
\cite{MA-IVK-MH:26} combines an explicit tracking bound with continuous
primal feasibility.

\subsection{Regularization and nonconvex solution tracking}
Primal and dual Tikhonov terms are commonly used to make a
Lagrangian strongly convex in the primal variables and strongly
concave in the multipliers, at the price of shifting its saddle
point~\cite{AB-ED-AS:18,JK-AN-UVS:11}. Koshal,
Nedi\'c, and Shanbhag analyze distributed regularized primal--dual methods
and bound the objective-value error induced by the approximation
\cite{JK-AN-UVS:11}; regularized Lagrangians also underlie online
primal--dual tracking with measurement feedback~\cite{AB-ED-AS:18}.
The convex part of the present article develops the
augmentation--regularization mechanism introduced in our preliminary
conference paper~\cite{MV-JC:18-necsys}. That work considered distributed,
time-varying convex optimization and illustrated the construction on a
dynamic traffic-assignment problem. Relative to it, the present treatment
makes the row-space/null-space geometry explicit and quantifies the
resulting regularization bias.

For fixed nonconvex problems, convexification and proximal primal--dual
methods replace the original problem by a sequence of better behaved local
models. Classical convexification procedures appear in~\cite{DPB:79},
local convergence of sequential convex programming is analyzed in
\cite{QTD-MD:10}, and the proximal primal--dual method of~\cite{MH:16-proxPDA}
converges to stationary solutions for smooth linearly constrained
problems. These are iteration-convergence results for a fixed problem,
rather than tracking results for a moving local solution.

The local behavior of a moving KKT point is instead naturally described by
parametric optimization and generalized equations. Strong regularity gives
a locally single-valued Lipschitz solution map
\cite{ALD-RTR:14,SMR:80,SMR:91}; path following and the singularities at
which a branch can cease to be regular are treated in
\cite{JG-DN-FG:01}. Zavala and Anitescu use this framework to analyze
real-time nonlinear optimization as a generalized equation
\cite{VZ-MA:10}. Our treatment employs the same sensitivity machinery to
construct the local implicit primal--dual map combined with a separate Lyapunov analysis of the coupled flow  to establish  the tracking result.

The closest work in scope is the running regularized primal--dual method of
Tang, Dall'Anese, Bernstein, and Low for time-varying constrained
nonconvex programs~\cite{YT-ED-AB-SL:18}. That work studies both a sampled
running algorithm and its continuous-time differential-inclusion limit,
uses dual Tikhonov regularization, and derives local KKT-trajectory error
bounds and nonbifurcation conditions. The two analyses are complementary.
Tang et al. establish a local contraction condition for a direct
dual-regularized primal--dual update. Here, a moving proximal center first
defines a locally strongly convex envelope; an inner primal--dual flow and
an outer descent then evolve on two time scales. This additional state
leads to different assumptions and to an explicit tube-invariance test and
additive-disturbance estimate.

\subsection{Traffic control and simulation-based optimization}
Traffic-signal control spans several distinct modeling traditions.
Webster's fixed-time formulas~\cite{webster1958} and adaptive systems such
as SCOOT and PRODYN~\cite{JJH-JLF-JT:83,PWH-DIR-RDB-MJR:82} optimize or
update cycle plans using analytical traffic relationships and measured
flows. Network models such as the cell-transmission model
\cite{CFD:94} support mathematical-programming and optimal-control
formulations, whereas max-pressure control uses adjacent queue information
and has a throughput-optimality guarantee under its store-and-forward
model~\cite{VaraiyaMaxpressure2013}. Learning-based controllers, such as
PressLight~\cite{HW-CC-GZ-KW-VG-KX-ZL:19}, use training episodes to learn
reactive signal policies.

The traffic component of this paper is closest to simulation-based
optimization. For instance, Osorio and Bierlaire combine a stochastic traffic simulator,
an analytical queueing model, and a derivative-free trust-region method for
urban signal control~\cite{CO-MB:13}. In contrast, we use SUMO evaluations
\cite{PAL-MB-LBW-JE-YE-YF-RH-LL-JR-PW-EW:18} to construct a radial basis function (RBF) surrogate whose gradient enters the regularized primal--dual flow. The
single-intersection experiments compare this optimization pipeline with
classical traffic-control methods.

\section{Notation and preliminaries}\label{sec:notation}

Table~\ref{tab:notation-guide} collects the symbols that carry the main
algorithmic meaning. Hats denote exact or regularized solution curves;
superscript ``imp'' denotes the local implicit solution of the proximal
subproblem.

\begin{table}[h]
\centering
\caption{Notation used across the three analytical layers.}
\label{tab:notation-guide}
\small
\begin{tabularx}{0.96\textwidth}{@{}p{0.22\textwidth}Y@{}}
\toprule
Symbol & Meaning \\
\midrule
$t$, $\tau$, $\kappa$
& Problem time, algorithm time, and their relation $t=\kappa\tau$ in
  the general saddle-flow result. \\
$x$, $p$, $q$
& Primal variable, equality multiplier, and nonnegative inequality
  multiplier. \\
$\hat x(t),\hat p(t),\hat q(t)$
& Reference KKT or saddle-point curve. Subscripts indicate the
  regularization used to define a nearby curve. \\
$\nu_1,\nu_2,\alpha$
& Equality-direction, null-space, and dual regularization parameters in
  the convex construction of Section~\ref{sec:regularization}. \\
$c,\alpha,\nu$
& Proximal, inner dual, and outer primal regularization parameters in
  the local nonconvex construction of Section~\ref{sec:non_convex_setting}. \\
$\epsilon_1,\epsilon_2$
& Inner primal--dual and outer descent time scales in the nonconvex flow. \\
$z^{\rm imp}(t,\ell)$
& Locally unique primal--dual solution of the regularized proximal
  subproblem at parameter $\ell$. \\
\bottomrule
\end{tabularx}
\end{table}

\subsection{Basic definitions}

\paragraph{Euclidean geometry}
We let $\real$ and $\realnonneg$ denote the set of real and
non-negative real numbers, respectively. In the Euclidean space, we use
$\norm{\cdot}$ and $\norm{\cdot}_p$ to denote the $2$- and {\it $p$-norm},
respectively.  The {\it ball} centered at $x\in\mathbb{R}^n$ with radius $r$ is denoted by $B(x;r)$, that is, $B(x;r) = \{y\in\mathbb{R}^n\textrm{ such that }\norm{ y - x } < r\}$. All vectors are considered as column vectors, and the
scalar product is denoted by $\langle \cdot,\cdot\rangle$. The
\emph{kernel} of a matrix $A$ is the subspace of vectors $v$ satisfying
$Av=0$. We denote the minimum and maximum eigenvalues of $A$ by
$\lambda_{\min}(A)$ and $\lambda_{\max}(A)$, respectively. Given
$v\in\mathbb{R}^n$, its components are $v^i$, $i=1,\ldots,n$.

\paragraph{Convexity}
A set $X
\subset \real^m $ is {\it convex} if, for any $x,y \in X$, the segment
$[x,y]=\{\lambda x + (1-\lambda) y\; |\; \lambda \in [0,1]\}$ belongs
to $X$. The {\it normal cone} of a convex set $X$ is $ N_X(z) = \{w \in
\real^m \; | \; \langle w,\overline{z} - z\rangle \leq 0\textrm{ for
  all }\overline{z} \in X \} $ if $z\in X$ and $\emptyset$ otherwise. A function $f:X \rightarrow \real$ on a convex set $X \subset \real^m$
is convex if $f(\lambda x + (1-\lambda) y) \leq \lambda f(x) + (1 -
\lambda)f(y)$ for all $x,y \in X$ and $\lambda \in [0,1]$. It is
{\it strictly convex} if the inequality is strict whenever $x\ne y$ and
$\lambda\in(0,1)$. A function $f$ is {\it $\nu$-strongly convex} when
$x\mapsto f(x)-\nu\norm{x}^2/2$ is convex, $f$ is concave when $-f$ is
convex, and $f$ is $\nu$-strongly concave when $-f$ is $\nu$-strongly
convex.

\paragraph{Smooth and nonsmooth calculus}
For a
vector-valued function $f:\real^m \rightarrow \real^{n}$, we let
$\nabla f$ be the $m \times n$-matrix whose columns are the gradients
$\nabla f^i$ of the component functions $f^i$, $i \in \until{n}$.  A
function $f:\real^m\rightarrow \real$ is {\it locally Lipschitz} at $x \in
\real^m$ if there exist $\delta_x, L_x > 0$ such that $|f(y) -
f(z)|\leq L_x\norm{y - z}$ for all $y, z $ within a distance
$\delta_x$ of~$x$. The function $f$ is locally Lipschitz if it is locally
Lipschitz at $x$, for all $x \in \real^m$, and globally
Lipschitz if there is a constant $L$ such that $|f(y) - f(z)|\leq
L\norm{y - z}$ for all $y, z \in \real^m$. By Rademacher's
Theorem~\cite{FHC:83}, locally Lipschitz functions are differentiable
almost everywhere. At the set of points where $f$ is not
differentiable, denoted $\Omega_f$, the generalized gradient is
defined as $ \partial f(x) = co\{\lim\limits_{i\rightarrow \infty}
\nabla f(x_i):x_i\rightarrow x,x_i\not\in S\cup \Omega_f\} $, where
$co$ denotes the {\it convex hull} and $S\subset \real^m$ is any set of
Lebesgue measure zero.  Given a function of several arguments
$f:\real^{m_1}\times \real^{m_2}\rightarrow \real$, we refer to its
partial derivative in the first argument by $\nabla_x f(x,y)$ or
$\displaystyle\frac{\partial f}{\partial x}(x,y)$ indistinctly, and so
on. We omit reference to the evaluation point when there is no
confusion.

\paragraph{Saddle points}
The point $(\overline{x},\overline{y})$ is a saddle point
of $f:X \times Y\rightarrow \mathbb{R}$ if
$f(\overline{x},y)\leq f(\overline{x},\overline{y})\leq
f(x,\overline{y})$ for all $x \in X$ and $y \in
Y$. A function of two arguments $f:X\times Y\rightarrow \mathbb{R}$ is {\it convex-concave} when the function $f(\cdot{ },y)$ is convex for all $y\in Y$ and $f(x,\cdot{ })$ is concave for all $x\in X$. We say that $f(x,y)$ is {\it ($\nu,\alpha$)-strongly-convex-concave} when it is $\nu$-strongly convex in $x$ and $\alpha$-strongly concave in $y$, and just strongly-convex-concave when there exist $(\nu,\alpha)$ such that the function is  ($\nu,\alpha$)-strongly-convex-concave. For functions of three variables this is applied with $y=(p,q)$: strong concavity is required \emph{jointly} in the last two variables (separate strong concavity in each does not imply it, e.g.\ $-(p^2+q^2)/2+2pq$), and joint concavity is what the tracking analysis uses. We will deal frequently with functions of three variables $f:\mathbb{R}^{m_1}\times \mathbb{R}^{m_2}\times \mathbb{R}_{\geq 0}^{m_3}$ ($m_1$, $m_2$ and $m_3 \in \mathbb{N}$) that are convex in the first variable and jointly concave in the second and third variable. We say that $(\overline x, \overline y,\overline z)$ is a {\it saddle point} of the function $f$ when
$f(\overline x,y,z) \leq f(\overline x,\overline y,\overline z)
\leq f(x,\overline y,\overline z)$ for all $x$, $y$, and $z$ in
$\mathbb{R}^{m_1}\times \mathbb{R}^{m_2}\times
\mathbb{R}_{\geq 0}^{m_3}$.
\subsection{Projected primal--dual dynamics}
Given a function of three variables $f(x,p,q)$ we define the associated  primal-dual dynamics as 

\begin{equation}\label{primal_dual} \tag{Primal-Dual}
  \begin{aligned}
       \displaystyle\frac{dx}{d\tau} & = - \displaystyle\frac{\partial
      f}{\partial x}(x(\tau),p(\tau),q(\tau)),
    \\
    \displaystyle\frac{dp}{d\tau} & = \displaystyle\frac{\partial
      f}{\partial p}(x(\tau),p(\tau),q(\tau)),
    \\
    \displaystyle\frac{dq}{d\tau} & = [\displaystyle\frac{\partial
      f}{\partial q}(x(\tau),p(\tau),q(\tau))]^+_{q},
\end{aligned}
  \end{equation}
where the projection is applied componentwise and
\[
[b]^+_a = \begin{cases}
b, & a>0,\\
\max\{b,0\}, & a=0.
\end{cases}
\]
\section{Problem formulation and motivating questions}\label{sec:theoretical_formulation}
For each $t\geq0$, consider the constrained problem
\begin{align}
  \begin{split}\label{problem_original_time_dependent}
    \min\limits_x & \quad f_t(x),
    \\
    \textrm{s.t } & \quad h_t(x) = 0,
    \\
    & \quad g_t(x)\leq 0,
  \end{split}
\end{align}
where $f_t:\real^m\to\real$, $h_t:\real^m\to\real^{n_1}$, and
$g_t:\real^m\to\real^{n_2}$ are twice continuously differentiable in
$x$. We follow a continuous curve of minimizers $\hat x(t)$ and associated
multipliers $\hat p(t)$ and $\hat q(t)$ satisfying
\begin{equation}\label{kkt}
  \begin{array}{l}
    \nabla f_t(\hat{x}(t))  + \sum\limits_i^{n_1}\hat{p}^i(t)\nabla
    h^i_t(\hat{x}(t)) + \sum\limits_i^{n_2}\hat{q}^i(t)\nabla g^i_t(\hat{x}(t)) = 0, 
    \\
    \noalign{\medskip} 
    h_t(\hat{x}(t)) = 0,\ 
    g_t(\hat{x}(t)) \leq 0,\ 
    \hat{q}^i(t) \geq 0,\ 
    \hat{q}^i(t)g^i_t(\hat{x}(t)) = 0\textrm{ for }i\in\{1,\ldots,n_2\}.
  \end{array}
\end{equation}
The \emph{Linear Independence Constraint Qualification} (LICQ) holds at
$\hat x(t)$ when the equality-constraint gradients and the gradients of
the active inequalities are linearly independent. In the convex setting,
\emph{Slater's constraint qualification} holds when an equality-feasible
point $\overline x(t)$ satisfies $g_t^i(\overline x(t))<0$ for every
inequality. These conditions ensure the existence of KKT multipliers in the
settings where they are invoked; see \cite{DPB:99,ALD-RTR:14}.

\subsection{Why regularization is needed}
Our aim is to track $\hat x(t)$ with an explicit error bound while
remaining robust to bounded implementation errors. In the convex setting,
the connection between KKT points and Lagrangian saddle points suggests a
primal--dual approach. Using the time-dependent Lagrangian, define
\begin{equation}\label{lagrangian}
  \mathcal{L}^t(x,p,q) = f_t(x) + p^Th_t(x) + q^Tg_t(x).
\end{equation}
For a convex problem satisfying the relevant constraint qualification, a
KKT point is a saddle point of~\eqref{lagrangian}. This motivates the flow
\begin{align}
\label{primal_dual_lagrangian}
    \displaystyle\frac{dx}{d\tau} & = - \displaystyle\frac{\partial
      \mathcal{L}^t}{\partial x}(x(\tau),p(\tau),q(\tau)), \notag
    \\
    \displaystyle\frac{dp}{d\tau} & = \displaystyle\frac{\partial
      \mathcal{L}^t}{\partial p}(x(\tau),p(\tau),q(\tau)), \notag
    \\
    \displaystyle\frac{dq}{d\tau} & = [\displaystyle\frac{\partial
      \mathcal{L}^t}{\partial q}(x(\tau),p(\tau),q(\tau))]^+_{q},
    \end{align}
as a means to follow the solution. Without sufficient strong convexity and concavity, however,
the flow need not converge to a fixed equilibrium. Consider
\begin{align*}
\min\limits_{x} &\quad  x^1 + x^2, \\
\textrm{s.t.} & \quad x^1 + x^2 -1 = 0.
\end{align*}
Its Lagrangian is $\mathcal{L}=x^1+x^2+p(x^1+x^2-1)$.
Figure~\ref{fig:pd} shows persistent oscillations of the primal variables
under the corresponding saddle flow.
\begin{figure}[H]
  \centering
 \includegraphics[width=0.58\textwidth,keepaspectratio=true]{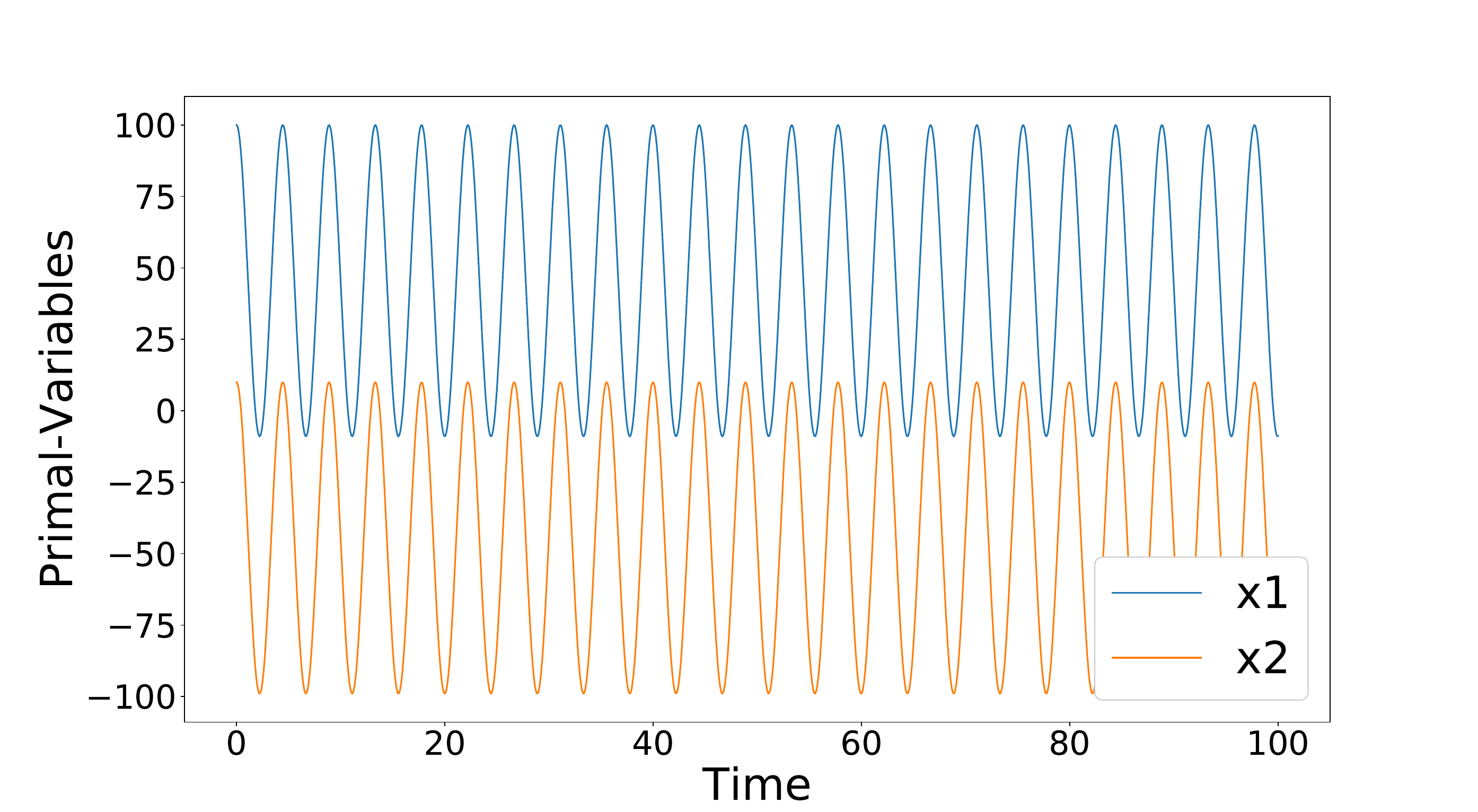}
\caption{Unregularized saddle flow for the flat constrained problem above.
The primal variables oscillate rather than converge to a fixed
primal--dual equilibrium.}\label{fig:pd}
\end{figure}

\section{Tracking and robustness for strongly convex--strongly concave saddle flows}\label{sec:tracking-robustness}
This section derives tracking and robustness bounds for the projected
primal--dual flow generated by a uniformly strongly convex--strongly
concave function $F$. The result is the analytical core used in both the
convex and nonconvex constructions.
\subsection{Tracking a moving saddle point}
\begin{theorem}[Bounds on primal-dual tracking of time-varying saddle points]\label{th:tracking}
For each $t\in\realnonneg$, let
$F^t:\real^m\times\real^{n_1}\times\realnonneg^{n_2}\to\real$ be
$\mathcal C^1$ and $(\nu,\alpha)$-strongly-convex-concave, with
$\nu,\alpha>0$ independent of $t$. Assume that its gradient is locally
Lipschitz in $(x,p,q)$, uniformly on compact time intervals, and depends
continuously on $t$. Let
$\hat z(t)=(\hat x(t),\hat p(t),\hat q(t))$ be its unique saddle point.
Assume that $\hat z$ is locally absolutely continuous and that
\[
  \norm{\dot{\hat z}(t)}\leq\delta
\]
for almost every $t$.  If $q(0)\geq0$, then every trajectory of
\eqref{primal_dual}, written as
$z(\tau)=(x(\tau),p(\tau),q(\tau))$, with $t=\kappa\tau$ and $\kappa>0$,
satisfies
\[
\begin{aligned}
&\norm{z(\tau)-\hat z(\kappa\tau)}\\
&\quad\leq e^{-m\tau}\norm{z(0)-\hat z(0)}
 +\frac{\kappa\delta}{m}\bigl(1-e^{-m\tau}\bigr),
\end{aligned}
\]
where $m=\min\{\nu,\alpha\}$. In particular,
\[
\limsup_{\tau\to\infty}
\norm{z(\tau)-\hat z(\kappa\tau)}
\leq\frac{\kappa\delta}{m}.
\]
\end{theorem}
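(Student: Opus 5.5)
We compare a trajectory $z(\tau)$ of \eqref{primal_dual}, with $F^{\kappa\tau}$ in place of $f$, with the rescaled saddle curve $w(\tau)=\hat z(\kappa\tau)$. The Lyapunov-type quantity is the distance $V(\tau)=\tfrac12\norm{z(\tau)-w(\tau)}^2$. The bound will follow from a differential inequality of the form $\dot V\le -2mV+\kappa\delta\sqrt{2V}$ almost everywhere. Equivalently, $e(\tau)=\norm{z(\tau)-w(\tau)}$ satisfies $D^+e\le -m e+\kappa\delta$ in the Dini sense, and a comparison lemma then gives the stated estimate.

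\emph{Step 1 (well-posedness).} Existence and forward invariance of $q\ge 0$ follow from the projected-dynamics theory for locally Lipschitz vector fields with time-continuous dependence. The solutions are Carath\'eodory and absolutely continuous, and $q(\tau)\ge0$ because the projection prevents $q$ from leaving the orthant. Since $w$ is locally absolutely continuous with $\norm{\dot w}\le\kappa\delta$ a.e., $V$ is absolutely continuous. Global existence will come a posteriori from the bound on $V$, which keeps $z$ in a compact set on bounded intervals.

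\emph{Step 2 (monotonicity with the projection).} Write $X(z,\tau)=(-\nabla_x F,\nabla_p F,\nabla_q F)$, evaluated with $F^{\kappa\tau}$. At the saddle point, $\nabla_xF^t(\hat z)=0$, $\nabla_pF^t(\hat z)=0$, and the variational inequality $\langle\nabla_qF^t(\hat z),q-\hat q\rangle\le0$ holds for all $q\ge0$. The projection changes only the $q$-components with $q^i=0$ and $\partial_{q^i}F<0$, replacing them by $0$. In those components $(q^i-\hat q^i)\partial_{q^i}F=-\hat q^i\partial_{q^i}F\ge0$, so
\[
\langle [X]^+ ,z-w\rangle\le\langle X(z),z-w\rangle .
\]
Subtracting $\langle X(\hat z),z-w\rangle\ge0$ (this is the variational inequality) and using strong monotonicity, which comes from $\nu$-strong convexity in $x$ and joint $\alpha$-strong concavity in $(p,q)$, gives
\[
\langle [X]^+,z-w\rangle\le-\nu\norm{x-\hat x}^2-\alpha\norm{(p,q)-(\hat p,\hat q)}^2\le -m\norm{z-w}^2 .
\]
This is the key step and the one I expect to need the most care: the sign bookkeeping of the projection against the complementarity at $\hat q$, and the use of joint concavity so that the cross terms in $(p,q)$ are handled correctly.

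\emph{Step 3 (conclusion).} Almost everywhere,
\[
\dot V=\langle [X]^+,z-w\rangle-\langle\dot w,z-w\rangle\le -m e^2+\kappa\delta e .
\]
To avoid dividing by $e$ at zero, apply this to $\sqrt{V+\varepsilon}$ or use a Gr\"onwall-type comparison for $e$ directly. This gives $e(\tau)\le e^{-m\tau}e(0)+\frac{\kappa\delta}{m}(1-e^{-m\tau})$, and letting $\tau\to\infty$ yields the limsup statement.
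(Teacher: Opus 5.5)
Your proposal is correct and is essentially the paper's argument. It uses the same squared-distance Lyapunov function, the same componentwise projection inequality, strong convexity--concavity combined with the saddle (variational) property of $\hat z$, and the same scalar comparison; you simply package the four first-order convexity/concavity inequalities as strong monotonicity of $(-\nabla_xF^t,\nabla_yF^t)$, where the paper keeps them as function-value saddle gaps, and your $\sqrt{V+\varepsilon}$ device handles the zero-distance case a bit more carefully than the paper's Dini remark. One slip to fix: the term you subtract satisfies $\langle X(\hat z),z-w\rangle=\langle\nabla_qF^t(\hat z),q-\hat q\rangle\le0$, not $\ge0$, and it is precisely this sign that lets you drop it.
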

\begin{proof}
Write $z=(x,p,q)$, $y=(p,q)$,
$\hat z=(\hat x,\hat p,\hat q)$, and $\hat y=(\hat p,\hat q)$,
all evaluated at $t=\kappa\tau$. On every compact time interval, the
unprojected vector field is continuous in time and locally Lipschitz in
$z$, with a uniform local Lipschitz constant.  The standard
projected-dynamical-system construction therefore gives a unique local
Carath\'eodory solution.  More explicitly, one may truncate the vector
  field outside a sufficiently large ball and apply the standard
time-dependent extension of
\cite[Proposition~$2.2$]{AC-EM-JC:16-scl} (obtained by the same Euler
polygonal construction on a compact time interval) to the resulting
globally Lipschitz field.  The estimate below then shows that the solution
does not reach the truncation boundary; compare the proof of
\cite[Lemma~$4.3$]{AC-EM-JC:16-scl}.  Continuation on successive compact
time intervals gives a global solution.  The nonnegative orthant is
forward invariant. Moreover, componentwise projection gives
\[
 \langle q-\hat q,[\nabla_qF^t]^+_q\rangle
 \leq \langle q-\hat q,\nabla_qF^t\rangle.
\]
For
$V(\tau)=\norm{z(\tau)-\hat z(\kappa\tau)}^2$, the chain rule and
$t=\kappa\tau$ give, at almost every $\tau$,
\begin{align*}
\frac12\dot V
&=\left\langle z-\hat z,
       \dot z-\kappa\dot{\hat z}(t)\right\rangle\\
&=-\left\langle x-\hat x,\nabla_xF^t(x,y)\right\rangle
  +\left\langle p-\hat p,\nabla_pF^t(x,y)\right\rangle\\
&\quad
  +\left\langle q-\hat q,
       [\nabla_qF^t(x,y)]^+_q\right\rangle
  +\left\langle\hat z-z,\kappa\dot{\hat z}(t)\right\rangle\\
&\leq-\left\langle x-\hat x,\nabla_xF^t(x,y)\right\rangle
  +\left\langle y-\hat y,\nabla_yF^t(x,y)\right\rangle
  +\left\langle\hat z-z,\kappa\dot{\hat z}(t)\right\rangle,
\end{align*}
where the last line uses the projection inequality above. Strong convexity
in $x$ and strong concavity in $y$, respectively, yield
\begin{align*}
-\left\langle x-\hat x,\nabla_xF^t(x,y)\right\rangle
&\leq F^t(\hat x,y)-F^t(x,y)
  -\frac\nu2\norm{x-\hat x}^2,\\
\left\langle y-\hat y,\nabla_yF^t(x,y)\right\rangle
&\leq F^t(x,y)-F^t(x,\hat y)
  -\frac\alpha2\norm{y-\hat y}^2.
\end{align*}
Substitution gives
\begin{align*}
\frac12\dot V
&\leq F^t(\hat x,y)-F^t(x,y)-\frac\nu2\norm{x-\hat x}^2\\
&\quad+F^t(x,y)-F^t(x,\hat y)
  -\frac\alpha2\norm{y-\hat y}^2
  +\langle \hat z-z,\kappa\dot{\hat z}(t)\rangle.
\end{align*}
The two function differences in this display satisfy
\begin{align*}
&F^t(\hat x,y)-F^t(x,y)+F^t(x,y)-F^t(x,\hat y)\\
&\quad=\bigl[F^t(\hat x,y)-F^t(\hat x,\hat y)\bigr]
 +\bigl[F^t(\hat x,\hat y)-F^t(x,\hat y)\bigr].
\end{align*}
Because $\hat y$ maximizes the $\alpha$-strongly concave
$F^t(\hat x,\cdot)$ over the convex set
$\real^{n_1}\times\realnonneg^{n_2}$ and $y(\tau)$ is feasible,
$F^t(\hat x,y)-F^t(\hat x,\hat y)\leq-\frac{\alpha}{2}\norm{y-\hat y}^2$;
likewise $\hat x$ is the unconstrained minimizer of the $\nu$-strongly
convex $F^t(\cdot,\hat y)$, so
$F^t(\hat x,\hat y)-F^t(x,\hat y)\leq-\frac{\nu}{2}\norm{x-\hat x}^2$.
Combining these two bounds with the preceding display and using
Cauchy--Schwarz and $\norm{\dot{\hat z}(t)}\leq\delta$ yields
\[
 \frac12\dot V
 \leq-\nu\norm{x-\hat x}^2-\alpha\norm{y-\hat y}^2
      +\kappa\delta\norm{z-\hat z}
 \leq-mV+\kappa\delta\sqrt V,
 \qquad m=\min\{\nu,\alpha\}.
\]
Set $r(\tau)=\sqrt{V(\tau)}=\norm{z(\tau)-\hat z(\kappa\tau)}$.
Whenever $r(\tau)>0$, division by $r(\tau)$ gives
\[
  \dot r(\tau)\leq-mr(\tau)+\kappa\delta.
\]
The same inequality holds in the upper-Dini-derivative sense when
$r(\tau)=0$. Multiplying by $e^{m\tau}$ and integrating from $0$ to
$\tau$ therefore yields
\[
 r(\tau)
 \leq e^{-m\tau}r(0)
   +\frac{\kappa\delta}{m}\bigl(1-e^{-m\tau}\bigr),
\]
which is the claimed estimate; taking the limit superior proves the final
statement.
\end{proof}
\begin{remark}[Interpretation]
 Tracking therefore
improves when the saddle curve moves more slowly (smaller $\delta$), the
algorithm runs faster relative to problem time (smaller $\kappa$), or the
curvature modulus $m$ is larger.
\end{remark}

\subsection{Additive disturbances}
We next characterize disturbances that enter the dynamics additively.
Formally, with \(t=\kappa\tau\), we consider the perturbed projected
primal--dual dynamics
\begin{align}
\begin{split}\label{primal_dual_dynamics_perturbed}
  \frac{dx}{d\tau}(\tau)
  &=
  -\nabla_x F^t\bigl(x(\tau),p(\tau),q(\tau)\bigr)
  + e_x(t),
  \\
  \frac{dp}{d\tau}(\tau)
  &=
  \nabla_p F^t\bigl(x(\tau),p(\tau),q(\tau)\bigr)
  + e_p(t),
  \\
  \frac{dq}{d\tau}(\tau)
  &=
  \left[
    \nabla_q F^t\bigl(x(\tau),p(\tau),q(\tau)\bigr)
    + e_q(t)
  \right]^+_{q(\tau)}.
\end{split}
\end{align}
Here, \(e_x\), \(e_p\), and \(e_q\) are measurable, locally essentially
bounded vector-valued functions representing additive disturbances in the
primal and dual dynamics. Together with the spatial regularity assumptions
on the vector field, these conditions place the system within the standard
existence theory for time-dependent projected dynamical systems and ensure
the existence of local Carathéodory solutions for every initial condition
satisfying \(q(0)\geq 0\).
This additive
disturbance model includes a structured approximation whenever the
difference between the approximate and exact vector fields is bounded.

\begin{proposition}[Robustness to additive disturbances]
\label{robustness}
Under the assumptions of Theorem~\ref{th:tracking}, suppose additionally
that the disturbances satisfy
\[
\bigl\|\bigl(e_x(t),e_p(t),e_q(t)\bigr)\bigr\|
\leq \delta_e
\]
for almost every \(t\). Let \(m=\min\{\nu,\alpha\}\), let
\(z(\tau)=(x(\tau),p(\tau),q(\tau))\), and define
\[
r(\tau)=\bigl\|z(\tau)-\hat z(\kappa\tau)\bigr\|.
\]
Then every trajectory of the perturbed projected primal--dual dynamics
satisfies
\[
r(\tau)
\leq
e^{-m\tau}r(0)
+\frac{\kappa\delta+\delta_e}{m}
 \bigl(1-e^{-m\tau}\bigr).
\]
Consequently,
\[
\limsup_{\tau\to\infty}r(\tau)
\leq
\frac{\kappa\delta+\delta_e}{m}.
\]
\end{proposition}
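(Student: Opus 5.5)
The plan is to repeat the Lyapunov argument from the proof of Theorem~\ref{th:tracking} for the perturbed flow \eqref{primal_dual_dynamics_perturbed}. The disturbance will enter the key differential inequality only as one extra additive term, bounded by $\delta_e r$.

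\textbf{Step 1: well-posedness.} Existence and global continuation of Carath\'eodory solutions follow as in Theorem~\ref{th:tracking}. The disturbances are measurable and essentially bounded, so the vector field is still locally Lipschitz in $z$ and integrably bounded in time on compact intervals. This is the standard time-dependent projected-system setting mentioned before the proposition. The truncation argument carries over once the a priori bound of Step 3 is available, and $q\ge 0$ remains forward invariant.

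\textbf{Step 2: the key projection inequality.} This is the only point needing new care, because the projection now acts on $\nabla_qF^t+e_q$ rather than on $\nabla_qF^t$. I will check it componentwise. If $q^i>0$, the projection is the identity and equality holds. If $q^i=0$, then $q^i-\hat q^i=-\hat q^i\le 0$, and $\max\{b,0\}\ge b$ with $b=\partial_{q^i}F^t+e_q^i$. Hence
\[
\langle q-\hat q,[\nabla_qF^t+e_q]^+_q\rangle\le\langle q-\hat q,\nabla_qF^t+e_q\rangle .
\]

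\textbf{Step 3: the differential inequality.} With $V=\norm{z-\hat z(\kappa\tau)}^2$, the computation of $\tfrac12\dot V$ is exactly as before, plus the term $\langle z-\hat z,e(t)\rangle$ with $e=(e_x,e_p,e_q)$. The strong convexity/concavity estimates and the saddle-point optimality bounds are reused verbatim. Applying Cauchy--Schwarz to both the drift term and the disturbance term gives
\[
\tfrac12\dot V\le -mV+(\kappa\delta+\delta_e)\sqrt V .
\]

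\textbf{Step 4: integration.} For $r=\sqrt V$, wherever $r>0$ this gives $\dot r\le -mr+\kappa\delta+\delta_e$. At $r=0$ the same bound holds in the upper Dini sense, since $r(\tau+h)\le\int_\tau^{\tau+h}\norm{\dot z-\kappa\dot{\hat z}}$. A comparison (Gr\"onwall) argument with the integrating factor $e^{m\tau}$ then yields the stated bound, and the $\limsup$ estimate follows immediately.
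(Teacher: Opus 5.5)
Your proposal is correct and follows the paper's proof essentially step for step. Both arguments use the perturbed projection inequality (which you verify componentwise, whereas the paper only asserts it), add the term $\langle z-\hat z,e\rangle\le\delta_e\sqrt V$ to the Lyapunov inequality of Theorem~\ref{th:tracking}, and close with the same scalar comparison for $\sqrt V$. One simplification: $r=\norm{z-\hat z(\kappa\,\cdot)}$ is absolutely continuous and $\dot r=0$ at almost every point of $\{r=0\}$, so $\dot r\le -mr+\kappa\delta+\delta_e$ holds almost everywhere, and the Dini-derivative step (whose integral justification you only outline) is not actually needed.
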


\begin{proof}
Let
\[
V(\tau)=\bigl\|z(\tau)-\hat z(\kappa\tau)\bigr\|^2.
\]
The projection inequality used in the proof of
Theorem~\ref{th:tracking} remains valid for the perturbed dual field:
\[
\left\langle
q-\hat q,
\left[\nabla_qF^t(z)+e_q(t)\right]^+_q
\right\rangle
\leq
\left\langle
q-\hat q,
\nabla_qF^t(z)+e_q(t)
\right\rangle.
\]
Therefore, the total contribution of the disturbances to
\(\tfrac12\dot V\) is bounded by
\[
\begin{aligned}
&\left\langle x-\hat x,e_x(t)\right\rangle
+\left\langle p-\hat p,e_p(t)\right\rangle
+\left\langle q-\hat q,e_q(t)\right\rangle\\
&\qquad\leq
\bigl\|z-\hat z(\kappa\tau)\bigr\|
\bigl\|\bigl(e_x(t),e_p(t),e_q(t)\bigr)\bigr\|
\leq \delta_e\sqrt{V}.
\end{aligned}
\]
Combining this estimate with the Lyapunov inequality from
Theorem~\ref{th:tracking} gives, for almost every \(\tau\),
\[
\frac12\dot V
\leq
-mV+(\kappa\delta+\delta_e)\sqrt V.
\]
Applying the same scalar comparison argument as in
Theorem~\ref{th:tracking} yields
\[
\sqrt{V(\tau)}
\leq
e^{-m\tau}\sqrt{V(0)}
+\frac{\kappa\delta+\delta_e}{m}
 \bigl(1-e^{-m\tau}\bigr),
\]
which proves both claims.
\end{proof}

\begin{remark}[Interpretation]
The motion of the saddle point and the implementation disturbances enter
the ultimate tracking bound additively. This decomposition makes it
possible to distinguish tracking error caused by the evolution of the
optimization problem from error caused by imperfect implementation of the
primal--dual dynamics.
\end{remark}
\section{Directional regularization for convex problems}\label{sec:regularization}
In this section, we adapt the results of the previous section to solve time-dependent convex optimization problems. That is, in problem~\eqref{problem_original_time_dependent}, we assume that $f_t(x)$ and $g_t(x)$ are convex and $h_t(x) = Ax - b_t$, where $A\in\mathbb R^{n_1\times m}$ has full row rank.

We start by noting that the results on tracking and robustness of
Section~\ref{sec:tracking-robustness} are not directly applicable to the time-varying optimization problem~\eqref{problem_original_time_dependent} by simply
taking $F^t = \mathcal{L}^t$. This is because the
Lagrangian~\eqref{lagrangian} is linear in the dual variables and
hence it fails to be strongly concave in these arguments. Regularization is a means to overcome this issue, but it produces a modification of the saddle-points of the Lagrangian. For instance, the standard Tikhonov regularization, see~\cite{JK-AN-UVS:11}, of $\Lc^t$ would
make the Lagrangian strongly-convex-concave by selecting $\nu$, $\alpha \in
\realpos$, and defining
\begin{align}\label{abstract_problem::augmented_lagrangian_classical}
  \hat{\mathcal{L}}^t_{\nu,\alpha}(x,p,q) = f_t(x) + p^T(Ax - b_t) +
  q^Tg_t(x) + \frac{\nu}{2}\norm{x}^2 - \frac{\alpha}{2}
  \big(\norm{p}^2 +\norm{q}^2 \big).
\end{align}
In general, this regularization perturbs the location of saddle points
without making any use of the underlying structure of the problem. Namely, the regularization term $\nu\norm{x}^2/2$ acts equally in all directions, and does not take advantage of the fact that adding
terms that satisfy the equality constraints are harmless to the solution of the optimization problem, leaving only some directions to be regularized (explicitly described below). Motivated by this observation, we instead propose a
geometry-aware regularization.
\subsection{Directional construction}
\begin{definition}[Augmented-Regularized Lagrangian] Given a Lagrangian $\mathcal{L}^t$  and  $\nu_1$, $\nu_2$, $\alpha\in\mathbb{R}_{\geq 0}$, we introduce the corresponding Augmented-Regularized Lagrangian as
\begin{align}\label{abstract_problem::augmented_lagrangian}
  \mathcal{L}^t_{\nu,\alpha}(x,p,q) & =f_t(x) + p^T(Ax - b_t) + q^Tg_t(x)
  \notag
  \\
  & \quad + \frac{\nu_1}{2}\norm{Ax - b_t}^2 + \frac{\nu_2}{2}
  \sum_{l=1}^{\dim (\ker(A))}(v_l^Tx)^2 - \frac{\alpha}{2} \big(\norm{p}^2
  +\norm{q}^2\big),
\end{align}
where $\{v_l \}$ is an orthonormal basis of the kernel of $A$.
\end{definition}
 We keep $A$ time-independent in this paper.  A time-varying extension
would require, in addition to constant rank, a locally absolutely
continuous projector onto $\ker A(t)$, a uniform lower bound on the
positive singular values of $A(t)$, and the temporal regularity required
by Theorem~\ref{th:tracking}.  Such an extension is not claimed here.
 We refer to the saddle points of this modified Lagrangian as $(\hat x_{\nu,\alpha},\hat p_{\nu,\alpha},\hat q_{\nu,\alpha})$.
Note that the addition of the term $\nu_1\norm{h_t(x)}^2/2$ does not
modify the location of the saddle points of the original problem,
while the addend $\nu_2 \sum_l(v_l^Tx)^2/2$ makes the augmented
Lagrangian strongly convex in the primal variables, as stated next.

\begin{proposition}[Strong convexity and concavity from directional regularization]\label{prop:subop-cons}
  For  $\nu_1$, $\nu_2$, $\alpha \in \realpos$, 
  \begin{align*}
    \nu = \lambda_{\min}(\nu_1A^TA +\nu_2\sum_l v_lv_l^T)
  \end{align*}
  is positive, and the
  Lagrangian~\eqref{abstract_problem::augmented_lagrangian} is
  $\nu$-strongly convex and $\alpha$-strongly concave on
  $\real^{m} \times \real^{n_1} \times \realnonneg^{n_2}$.
\end{proposition}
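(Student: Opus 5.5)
The plan is to split the Lagrangian into pieces whose convexity in $x$ and concavity in $(p,q)$ can be read off directly, and then show that the quadratic regularization matrix is positive definite. Set
\[
M=\nu_1A^TA+\nu_2P,\qquad P=\sum_l v_lv_l^T .
\]
Here $P$ is the orthogonal projector onto $\ker(A)$, because $\{v_l\}$ is an orthonormal basis of the kernel.

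\emph{Positivity of $\nu$.} We show $x^TMx>0$ for every $x\neq0$. Decompose $x=x_r+x_k$ with $x_k\in\ker(A)$ and $x_r\in\ker(A)^\perp=\operatorname{range}(A^T)$. Then
\[
x^TMx=\nu_1\norm{Ax_r}^2+\nu_2\norm{x_k}^2 .
\]
If this quantity vanishes, then $x_k=0$ and $Ax_r=0$. The latter gives $x_r\in\ker(A)\cap\ker(A)^\perp=\{0\}$. Hence $M$ is symmetric positive definite and $\nu=\lambda_{\min}(M)>0$. Full row rank of $A$ is not even needed for this step. One can also note the explicit value
\[
\nu=\min\{\nu_1\sigma_{\min}^2(A),\nu_2\},
\]
where $\sigma_{\min}(A)$ is the smallest positive singular value. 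This follows because $M$ acts as $\nu_1A^TA$ on $\operatorname{range}(A^T)$ and as $\nu_2 I$ on $\ker(A)$, and both subspaces are invariant under $A^TA$.

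\emph{Strong convexity in $x$.} Fix $(p,q)\in\real^{n_1}\times\realnonneg^{n_2}$ and subtract $\frac{\nu}{2}\norm{x}^2$. What remains is
\[
f_t(x)+p^T(Ax-b_t)+q^Tg_t(x)+\Big[\tfrac{\nu_1}{2}\norm{Ax-b_t}^2+\tfrac{\nu_2}{2}x^TPx-\tfrac{\nu}{2}\norm{x}^2\Big].
\]
The first three terms are convex: $f_t$ is convex, $p^T(Ax-b_t)$ is affine, and $q^Tg_t$ is a nonnegative combination of convex functions. This is the one place where $q\geq0$ is essential. The bracketed term equals
\[
\tfrac12x^T(M-\nu I)x-\nu_1b_t^TAx+\text{const},
\]
which is convex because $M-\nu I\succeq0$ by the definition of $\lambda_{\min}$. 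This gives $\nu$-strong convexity in the sense of the paper's definition.

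\emph{Strong concavity in $(p,q)$.} Fix $x$. The function is jointly
\[
\text{affine in }(p,q)\;-\;\tfrac{\alpha}{2}\norm{(p,q)}^2 .
\]
Adding $\frac{\alpha}{2}\norm{(p,q)}^2$ leaves an affine, hence concave, function on the convex domain $\real^{n_1}\times\realnonneg^{n_2}$. So the Lagrangian is jointly $\alpha$-strongly concave, which is the joint notion the paper requires.

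The only nontrivial step is the positive definiteness of $M$, i.e., making sure the two regularizers together cover all directions. The orthogonal decomposition $\real^m=\ker(A)\oplus\operatorname{range}(A^T)$ handles it. A minor point to state explicitly is that convexity in $x$ holds only for $q\ge0$, which is why the domain is restricted to $\realnonneg^{n_2}$.
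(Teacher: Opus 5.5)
Your proof is correct. It rests on the same two facts as the paper's proof: the row space and the kernel of $A$ are complementary, so $M=\nu_1A^TA+\nu_2P$ is positive definite; and the remaining primal terms are convex because $f_t$ and each $g_t^i$ are convex and $q\geq0$.

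The execution differs. The paper works at the level of the primal Hessian $\nabla^2f_t+\sum_iq^i\nabla^2g_t^i+M$, viewed as a sum of positive-semidefinite matrices. It gets $\lambda_{\min}(M)>0$ by asserting, as ``a straightforward argument,'' that the spectrum of $M$ is the union of the positive eigenvalues of $\nu_1A^TA$ and of $\nu_2P$. You instead check the definition directly, by subtracting $\tfrac{\nu}{2}\norm{x}^2$ and verifying convexity, and you prove $x^TMx>0$ through the orthogonal splitting $x=x_r+x_k$.

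What your route buys:
\begin{itemize}
\item it does not use second derivatives, so it covers nonsmooth convex $f_t,g_t$;
\item it supplies the spectral argument that the paper only sketches;
\item it writes out the joint $\alpha$-strong concavity in $(p,q)$, which the paper's proof leaves implicit.
\end{itemize}
The paper's version is shorter, given its standing $\mathcal C^2$ assumption.

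One small caveat about your aside. The formula $\nu=\min\{\nu_1\sigma_{\min}^2(A),\nu_2\}$ needs $\ker A\neq\{0\}$. If $n_1=m$, then $P=0$ and $\nu=\nu_1\sigma_{\min}^2(A)$. This does not affect the main argument.
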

\begin{proof}
  The primal component of the Hessian of $\mathcal{L}^t_{\nu,\alpha}$ is
  \[
  \nabla_x^2\mathcal{L}^t_{\nu,\alpha}=\nabla^2f_t
  +\sum_iq^i\nabla^2g^i_t+\nu_1A^TA+\nu_2\sum_l v_lv_l^T.
  \]
  This is the sum of positive-semidefinite matrices (using
  convexity of $f_t$ and of each $g_t^i$ together with $q\geq0$).
  A straightforward argument shows that the eigenvalues of the matrix $\nu_1A^TA + \nu_2\sum
  v_lv_l^T$ are the union of the strictly positive eigenvalues of the matrix
  $\nu_1A^TA$ and the strictly positive eigenvalues of the matrix $\nu_2\sum
  v_lv_l^T$, and the result follows.
\end{proof}

\begin{remark}[Interpretation]
The row space of $A$ and the kernel of $A$ are complementary.
The $\nu_1$ term supplies convexity in the former directions, the $\nu_2$
term in the latter, and $\alpha$ supplies dual concavity. This geometric
separation is the key difference from Tikhonov regularization.

The directional construction is particularly suited when affine equalities
encode persistent conservation, balance, or coupling relations. In
network-flow models, $A$ may be a reduced
node--edge incidence matrix, with its kernel describing feasible
circulations. In resource-allocation and scheduling problems, the rows of
$A$ may encode total-resource or budget constraints, while its kernel
contains redistributions that preserve those totals. Similar structures
arise in power, water, communication, inventory, and transportation
networks. Redundant balance equations may be removed to obtain the
full-row-rank representation assumed here.
\end{remark}

\subsection{Regularization bias}

The regularization of the Lagrangian comes at the cost of perturbing
the location of the saddle points corresponding to the original
optimization problem. Our next results provide explicit bounds on the size of
the perturbation as a function of the parameters employed in the
regularization.  The results are analogous to those presented in \cite{JK-AN-UVS:11} for Tikhonov regularization. We will assume that the following assumptions hold throughout the rest of this section.
\begin{assumption}[Slater's condition]\label{assumption::slater}
For each $t$, there is a point $\overline{x}(t)$ such that
$A\overline{x}(t)-b_t=0$ and $g_t^i(\overline{x}(t))<0$ for every
$i\in\{1,\ldots,n_2\}$.
\end{assumption}
\begin{assumption}[Compactness of the feasible set]\label{assumption::comp}
The set
\[
X_t=\{x\in\mathbb R^m:Ax-b_t=0,\ g_t(x)\leq0\}
\]
is compact, and $D_t$ is chosen so that $\max_{x\in X_t}\norm{x}\leq D_t$.
\end{assumption}
\begin{assumption}[Lipschitz gradients]\label{assumption::grad_lips}
The gradients $\nabla f_t$ and $\nabla g_t^i$ are globally Lipschitz,
with constants $L_t^f$ and $L_t^g$, respectively, where the latter is
valid for every $i\in\{1,\ldots,n_2\}$.
\end{assumption}

Let $P=\sum_l v_lv_l^T$ and introduce the Lagrangian with primal
regularization only,
\[
\mathcal L_\nu^t=\mathcal L^t+\frac{\nu_1}{2}\norm{Ax-b_t}^2
+\frac{\nu_2}{2}x^TPx.
\]
Let $(\hat x_\nu,\hat p_\nu,\hat q_\nu)$ and
$(\hat x_{\nu,\alpha},\hat p_{\nu,\alpha},\hat q_{\nu,\alpha})$
denote saddle points of $\mathcal L_\nu^t$ and
$\mathcal L_{\nu,\alpha}^t$, respectively.

\begin{proposition}[Explicit perturbation bounds for augmented regularization]
\label{prop:perturbation-bounds}
Fix $t$ and define
\begin{align*}
\lambda_1&=\lambda_{\min}(\nu_1A^TA+\nu_2P),
&\lambda_2&=\lambda_{\max}(\nu_2P),\\
\gamma_t&=-\max_i g_t^i(\overline x(t))>0,
&M_t^f&=\max_{x\in X_t}\norm{\nabla f_t(x)},\\
&&M_t^g&=\max_{x\in X_t}\norm{\nabla g_t(x)}_2,
\end{align*}
where $\norm{\nabla g_t(x)}_2$ is the induced matrix norm. Let $d_t$ be
any known lower bound satisfying $d_t\leq f_t(\hat x_\nu(t))$
\greenfix{(see Remark~\ref{rem:computable-dt} for explicit computable
choices)}, and set
\begin{align*}
\mathsf Q_t&=
\frac{f_t(\overline x(t))+(\lambda_2/2)\norm{\overline x(t)}^2-d_t}
{\gamma_t},\\
\mathsf P_t&=\norm{(AA^T)^{-1}A}
\bigl(M_t^f+\lambda_2D_t+M_t^g\mathsf Q_t\bigr),\\
\rho_t&=\sqrt{\frac{\alpha(\mathsf P_t^2+\mathsf Q_t^2)}{4\lambda_1}}.
\end{align*}
Then
\begin{align}
\norm{\hat q_\nu(t)}&\leq\mathsf Q_t,
&\norm{\hat p_\nu(t)}&\leq\mathsf P_t,
\label{bounds-p-q}\\
\norm{\hat x_{\nu,\alpha}(t)-\hat x_\nu(t)}&\leq\rho_t.
\label{bound-primal-displacement}
\end{align}
Moreover,
\begin{align}
\left|f_t(\hat x_{\nu,\alpha}(t))-f_t(\hat x(t))\right|
&\leq M_t^f\rho_t+\frac{L_t^f}{2}\rho_t^2
+\frac{\lambda_2}{2}D_t^2,
\label{bound-objective}\\
\max\{0,g_t^i(\hat x_{\nu,\alpha}(t))\}
&\leq M_t^g\rho_t+\frac{L_t^g}{2}\rho_t^2,
\quad i=1,\ldots,n_2,
\label{bound-ineq-violation}\\
\norm{A\hat x_{\nu,\alpha}(t)-b_t}
&\leq\norm A\,\rho_t.
\label{bound-eq-violation}
\end{align}
\end{proposition}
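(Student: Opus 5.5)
The proof will follow the scheme of Koshal--Nedi\'c--Shanbhag for Tikhonov regularization, adapted to the directional term. It has three stages: dual bounds for the primal-only regularized Lagrangian $\mathcal L_\nu^t$, a displacement bound from adding the dual regularization, and first-order Taylor bounds for the objective and constraints.

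\textbf{Step 1: bounds \eqref{bounds-p-q}.}
Since $\mathcal L_\nu^t$ is convex in $x$ and linear in $(p,q)$, $(\hat x_\nu,\hat p_\nu,\hat q_\nu)$ is a KKT triple of the primal-regularized problem
\[
\min\; f_t(x)+\tfrac{\nu_2}{2}x^TPx \quad\text{over } X_t ,
\]
where the $\nu_1$ term vanishes on $X_t$.

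For $\hat q_\nu$, evaluate the saddle inequality $\mathcal L^t_\nu(\hat x_\nu,\hat p_\nu,\hat q_\nu)\le \mathcal L^t_\nu(\overline x,\hat p_\nu,\hat q_\nu)$. Complementary slackness makes the left side equal to $f_t(\hat x_\nu)+\frac{\nu_2}{2}\hat x_\nu^TP\hat x_\nu\ge d_t$. The right side is at most $f_t(\overline x)+\frac{\lambda_2}{2}\norm{\overline x}^2-\gamma_t\sum_i\hat q^i_\nu$, because $\overline x$ is equality feasible and $x^TPx\le\norm{x}^2$ with $\lambda_2=\nu_2$. Hence $\norm{\hat q_\nu}\le\norm{\hat q_\nu}_1\le\mathsf Q_t$.

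For $\hat p_\nu$, use stationarity:
\[
A^T\hat p_\nu=-\nabla f_t(\hat x_\nu)-\nu_2P\hat x_\nu-\nabla g_t(\hat x_\nu)\hat q_\nu .
\]
Here $\nu_1A^T(A\hat x_\nu-b_t)=0$ by feasibility. Multiply by $(AA^T)^{-1}A$, which is legitimate since $A$ has full row rank. Then bound the right side using $\hat x_\nu\in X_t$, $\norm{\hat x_\nu}\le D_t$, and the definitions of $M^f_t$ and $M^g_t$. This gives $\mathsf P_t$.

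\textbf{Step 2: bound \eqref{bound-primal-displacement}.}
This is the main obstacle. Write $\phi(x,y)=\mathcal L^t_\nu$, so that $\mathcal L^t_{\nu,\alpha}=\phi-\frac\alpha2\norm y^2$. Then $\phi$ is $\lambda_1$-strongly convex in $x$ (Proposition~\ref{prop:subop-cons}) and linear in $y$. Chain four saddle inequalities:
\begin{align*}
\phi(\hat x_{\nu,\alpha},\hat y_\nu)-\tfrac\alpha2\norm{\hat y_\nu}^2
&\le \phi(\hat x_{\nu,\alpha},\hat y_{\nu,\alpha})-\tfrac\alpha2\norm{\hat y_{\nu,\alpha}}^2\\
&\le \phi(\hat x_\nu,\hat y_{\nu,\alpha})-\tfrac\alpha2\norm{\hat y_{\nu,\alpha}}^2
\le \phi(\hat x_\nu,\hat y_\nu)-\tfrac\alpha2\norm{\hat y_{\nu,\alpha}}^2 .
\end{align*}
Strong convexity at the unconstrained minimizer gives
\[
\phi(\hat x_{\nu,\alpha},\hat y_\nu)\ge\phi(\hat x_\nu,\hat y_\nu)+\tfrac{\lambda_1}{2}\norm{\hat x_{\nu,\alpha}-\hat x_\nu}^2 .
\]
Therefore $\frac{\lambda_1}{2}\norm{\Delta x}^2\le\frac\alpha2\bigl(\norm{\hat y_\nu}^2-\norm{\hat y_{\nu,\alpha}}^2\bigr)\le\frac\alpha2(\mathsf P_t^2+\mathsf Q_t^2)$.

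This yields $\sqrt{\alpha(\mathsf P_t^2+\mathsf Q_t^2)/\lambda_1}$, which is a factor $2$ larger than $\rho_t$. To recover the missing factor I will also use the $\alpha$-strong concavity in $y$ symmetrically. Adding $\frac\alpha2\norm{\hat y_{\nu,\alpha}-\hat y_\nu}^2$-type terms, and using $\norm a^2-\norm b^2\le\frac12\norm a^2+\ldots$ through completion of squares, should reduce the estimate to
\[
\lambda_1\norm{\Delta x}^2\le\tfrac\alpha4\norm{\hat y_\nu}^2 ,
\]
that is, the maximum of $\alpha\langle\hat y_\nu,u\rangle-\alpha\norm u^2$ type expressions. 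If that fails, I will settle for the bound without the factor.

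\textbf{Step 3: bounds \eqref{bound-objective}--\eqref{bound-eq-violation}.}
For equality violation, $A\hat x_\nu=b_t$ gives $\norm{A\hat x_{\nu,\alpha}-b_t}=\norm{A(\hat x_{\nu,\alpha}-\hat x_\nu)}\le\norm A\rho_t$.

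For inequality violation, apply the descent lemma to each $g^i_t$ around $\hat x_\nu\in X_t$. Using $g^i_t(\hat x_\nu)\le0$ and $\norm{\nabla g_t^i}\le M^g_t$ gives \eqref{bound-ineq-violation}. The same lemma gives $|f_t(\hat x_{\nu,\alpha})-f_t(\hat x_\nu)|\le M^f_t\rho_t+\frac{L^f_t}{2}\rho_t^2$.

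It then remains to show $|f_t(\hat x_\nu)-f_t(\hat x)|\le\frac{\lambda_2}{2}D_t^2$. Both points lie in $X_t$. Optimality of each for its own objective gives
\[
f_t(\hat x)\le f_t(\hat x_\nu)\le f_t(\hat x)+\tfrac{\nu_2}{2}\bigl(\hat x^TP\hat x-\hat x_\nu^TP\hat x_\nu\bigr)\le f_t(\hat x)+\tfrac{\lambda_2}{2}D_t^2 .
\]
The triangle inequality then finishes the proof.
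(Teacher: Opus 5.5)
Steps~1 and~3 match the paper's argument and are fine. The gap is in Step~2: your proposal never reaches the constant $\frac{\alpha}{4\lambda_1}$ stated in \eqref{bound-primal-displacement}.

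Write $\Delta x=\hat x_{\nu,\alpha}-\hat x_\nu$. Your chain uses strong convexity once and gives $\lambda_1\norm{\Delta x}^2\le\alpha\norm{\hat y_\nu}^2$, i.e.\ a bound of $2\rho_t$. The repair you sketch inserts the $\alpha$-strong-concavity term into the first link. That recovers only a factor $\sqrt2$, because it yields
$\frac{\lambda_1}{2}\norm{\Delta x}^2\le\frac\alpha2\bigl(\norm{\hat y_\nu}^2-\norm{\hat y_{\nu,\alpha}}^2-\norm{\hat y_\nu-\hat y_{\nu,\alpha}}^2\bigr)=\alpha\langle\hat y_\nu-\hat y_{\nu,\alpha},\hat y_{\nu,\alpha}\rangle\le\frac\alpha4\norm{\hat y_\nu}^2$.
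This gives $\sqrt2\,\rho_t$, not the $\lambda_1\norm{\Delta x}^2\le\frac\alpha4\norm{\hat y_\nu}^2$ you anticipate. Your fallback of ``settling for the bound without the factor'' would prove a weaker proposition. In that case \eqref{bound-objective}--\eqref{bound-eq-violation} would all hold only with $\rho_t$ replaced by $2\rho_t$.

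The missing ingredient is a second use of primal strong convexity. The middle link of your chain, $\phi(\hat x_{\nu,\alpha},\hat y_{\nu,\alpha})\le\phi(\hat x_\nu,\hat y_{\nu,\alpha})$, also improves by $\frac{\lambda_1}{2}\norm{\Delta x}^2$. This holds because the dual penalty is constant in $x$, so $\hat x_{\nu,\alpha}$ is the unconstrained minimizer of the $\lambda_1$-strongly convex function $\phi(\cdot,\hat y_{\nu,\alpha})$. With both improvements (primal strong convexity at both links and dual strong concavity), your chain gives
$\lambda_1\norm{\Delta x}^2\le\alpha\bigl(\langle\hat y_{\nu,\alpha},\hat y_\nu\rangle-\norm{\hat y_{\nu,\alpha}}^2\bigr)\le\frac\alpha4\norm{\hat y_\nu}^2\le\frac\alpha4(\mathsf P_t^2+\mathsf Q_t^2)$,
which is exactly $\rho_t$.

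The paper reaches the same estimate by a different organization. It adds the two strong-convexity inequalities at the two saddle points. It then uses that $\mathcal L_\nu^t$ is affine in $y$ to reduce the sum to $\langle\nabla_y\mathcal L_\nu^t(\hat x_{\nu,\alpha},\hat y_{\nu,\alpha})-\nabla_y\mathcal L_\nu^t(\hat x_\nu,\hat y_\nu),\hat y_\nu-\hat y_{\nu,\alpha}\rangle$. Finally it applies the two dual first-order variational inequalities: the unregularized one contributes a nonpositive term, and the regularized one contributes $\alpha\langle\hat y_{\nu,\alpha},\hat y_\nu-\hat y_{\nu,\alpha}\rangle$.
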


\begin{proof}
All quantities below are evaluated at the fixed time $t$. Since the dual
variables of $\mathcal L_\nu^t$ are unregularized, $\hat x_\nu$ is
feasible for the original problem. The saddle inequalities and the
strictly feasible point $\overline x$ give
\begin{align*}
d_t
&\leq\mathcal L_\nu^t(\hat x_\nu,\hat p_\nu,\hat q_\nu)\\
&\leq\mathcal L_\nu^t(\overline x,\hat p_\nu,\hat q_\nu)\\
&\leq f_t(\overline x)-\gamma_t\norm{\hat q_\nu}_1
  +\frac{\lambda_2}{2}\norm{\overline x}^2.
\end{align*}
Rearrangement and $\norm{\cdot}\leq\norm{\cdot}_1$ prove the bound on
$\hat q_\nu$. Stationarity at $\hat x_\nu$, together with
$A\hat x_\nu-b_t=0$, gives
\[
\hat p_\nu=-(AA^T)^{-1}A
\bigl(\nabla f_t(\hat x_\nu)+\nu_2P\hat x_\nu
      +\nabla g_t(\hat x_\nu)\hat q_\nu\bigr).
\]
Compactness of $X_t$ and the bound on $\hat q_\nu$ now prove the bound
on $\hat p_\nu$.

For the displacement estimate, write $y=(p,q)$.
At fixed $\hat y_{\nu,\alpha}$, the dual penalty is independent of $x$:
\[
 \mathcal L_{\nu,\alpha}^t(x,\hat y_{\nu,\alpha})
 =\mathcal L_\nu^t(x,\hat y_{\nu,\alpha})
 -\frac{\alpha}{2}\norm{\hat y_{\nu,\alpha}}^2.
\]
The primal saddle inequality for $\mathcal L_{\nu,\alpha}^t$ therefore
remains unchanged after cancelling this common constant, and hence
\[
 \hat x_{\nu,\alpha}
 =\operatorname*{argmin}_x
   \mathcal L_\nu^t(x,\hat y_{\nu,\alpha}).
\]

Since $\mathcal L_\nu^t$ is $\lambda_1$-strongly convex in $x$, the
primal minimizing conditions at the two saddle points give
\begin{align*}
\mathcal L_\nu^t(\hat x_\nu,\hat y_{\nu,\alpha})
&\geq \mathcal L_\nu^t(\hat x_{\nu,\alpha},\hat y_{\nu,\alpha})
 +\frac{\lambda_1}{2}
  \norm{\hat x_{\nu,\alpha}-\hat x_\nu}^2,\\
\mathcal L_\nu^t(\hat x_{\nu,\alpha},\hat y_\nu)
&\geq \mathcal L_\nu^t(\hat x_\nu,\hat y_\nu)
 +\frac{\lambda_1}{2}
  \norm{\hat x_{\nu,\alpha}-\hat x_\nu}^2.
\end{align*}
Adding these inequalities and using that $\mathcal L_\nu^t$ is affine
in $y$ yields
\begin{align*}
\lambda_1\norm{\hat x_{\nu,\alpha}-\hat x_\nu}^2
&\leq
\left\langle
\nabla_y\mathcal L_\nu^t(\hat x_{\nu,\alpha},\hat y_{\nu,\alpha})
-\nabla_y\mathcal L_\nu^t(\hat x_\nu,\hat y_\nu),
\hat y_\nu-\hat y_{\nu,\alpha}
\right\rangle.
\end{align*}
The dual maximizing conditions, evaluated at the other saddle point,
are
\begin{align*}
\left\langle
\nabla_y\mathcal L_\nu^t(\hat x_\nu,\hat y_\nu),
\hat y_{\nu,\alpha}-\hat y_\nu
\right\rangle&\leq0,\\
\left\langle
\nabla_y\mathcal L_\nu^t(\hat x_{\nu,\alpha},\hat y_{\nu,\alpha})
-\alpha\hat y_{\nu,\alpha},
\hat y_\nu-\hat y_{\nu,\alpha}
\right\rangle&\leq0.
\end{align*}
Both comparison points belong to
$\mathbb R^{n_1}\times\mathbb R_{\geq0}^{n_2}$. Combining the last three
inequalities therefore gives
\begin{align*}
\lambda_1\norm{\hat x_{\nu,\alpha}-\hat x_\nu}^2
&\leq\alpha\bigl(\langle\hat y_{\nu,\alpha},\hat y_\nu\rangle
                 -\norm{\hat y_{\nu,\alpha}}^2\bigr)\\
&\leq\frac\alpha4\norm{\hat y_\nu}^2
\leq\frac\alpha4(\mathsf P_t^2+\mathsf Q_t^2),
\end{align*}
which proves \eqref{bound-primal-displacement}.

The Lipschitz-gradient Taylor estimate and the Cauchy--Schwarz inequality
give
\begin{align*}
|f_t(\hat x_{\nu,\alpha})-f_t(\hat x_\nu)|
&\leq \bigl|\bigl\langle\nabla f_t(\hat x_\nu),
      \hat x_{\nu,\alpha}-\hat x_\nu\bigr\rangle\bigr|
      +\frac{L_t^f}{2}
       \norm{\hat x_{\nu,\alpha}-\hat x_\nu}^2\\
&\leq \norm{\nabla f_t(\hat x_\nu)}
       \norm{\hat x_{\nu,\alpha}-\hat x_\nu}
      +\frac{L_t^f}{2}
       \norm{\hat x_{\nu,\alpha}-\hat x_\nu}^2\\
&\leq M_t^f\rho_t+\frac{L_t^f}{2}\rho_t^2.
\end{align*}
The last inequality uses $\hat x_\nu\in X_t$, and hence
$\norm{\nabla f_t(\hat x_\nu)}\leq M_t^f$, together with
\eqref{bound-primal-displacement}.

Because both $\hat x$ and $\hat x_\nu$ are feasible and $\hat x$
minimizes $f_t$ on $X_t$,
\[
0\leq f_t(\hat x_\nu)-f_t(\hat x)
\leq\frac{\nu_2}{2}\hat x^TP\hat x
\leq\frac{\lambda_2}{2}D_t^2.
\]
This proves \eqref{bound-objective}. Applying the same Taylor estimate
to each $g_t^i$, using $g_t^i(\hat x_\nu)\leq0$, proves
\eqref{bound-ineq-violation}. Finally,
$A\hat x_\nu=b_t$ and \eqref{bound-primal-displacement} give
\eqref{bound-eq-violation}.
\end{proof}

\begin{remark}[Computing the lower bound $d_t$]
\label{rem:computable-dt}
The lower bound required in Proposition~\ref{prop:perturbation-bounds} is
directly obtainable from the quantities already present in
Assumption~\ref{assumption::comp}. Indeed, for any reference point
$x_t^0\in\mathbb R^m$, convexity of $f_t$ and
$\norm{\hat x_\nu(t)}\leq D_t$ give
\[
 f_t(\hat x_\nu(t))
 \geq f_t(x_t^0)-\left\langle\nabla f_t(x_t^0),x_t^0\right\rangle
      -D_t\norm{\nabla f_t(x_t^0)}.
\]
Consequently, one may use the explicit choice
\[
 d_t=f_t(x_t^0)-\left\langle\nabla f_t(x_t^0),x_t^0\right\rangle
      -D_t\norm{\nabla f_t(x_t^0)}.
\]
In particular, taking $x_t^0=0$ yields
$d_t=f_t(0)-D_t\norm{\nabla f_t(0)}$; if $f_t$ is known to be
nonnegative, one may simply take $d_t=0$. A tighter computable alternative
is $d_t=\min_{\norm{x}\leq D_t}f_t(x)$, which is a convex problem over a
ball and does not involve the original constraints.
\end{remark}

The next corollary combines the convexity, concavity, perturbation, and tracking
results.

\begin{corollary}[Tracking and performance with the augmented-regularized Lagrangian]
\label{cor:convex-tracking}
Suppose that $f_t$ and $g_t^i$ are convex and twice continuously
differentiable, $h_t(x)=Ax-b_t$, and
Assumptions~\ref{assumption::slater}--\ref{assumption::grad_lips} hold.
Fix $\nu_1,\nu_2,\alpha>0$ and consider the projected primal--dual
dynamics initialized with $q(0)\geq0$. In addition, suppose that the saddle curve
of $\mathcal L_{\nu,\alpha}^t$ and its vector field satisfy all temporal
regularity and speed assumptions in Theorem~\ref{th:tracking}. Then:
\begin{itemize}
\item[(i)] The primal-dual dynamics tracks the saddle curve with the
guarantees of Theorem~\ref{th:tracking}, and with the robustness guarantee
of Proposition~\ref{robustness} when bounded disturbances are present,
where $\lambda_{\min}(\nu_1A^TA+\nu_2P)$ is a primal strong-convexity
modulus and $\alpha$ is the joint strong-concavity modulus in $(p,q)$.
Consequently, the contraction modulus in the tracking and robustness bounds
is
\[
m=\min\bigl\{\lambda_{\min}(\nu_1A^TA+\nu_2P),\alpha\bigr\}.
\]
\item[(ii)] Let $t=\kappa\tau$ and define the primal tracking error
\[
e_x(\tau)=\norm{x(\tau)-\hat x_{\nu,\alpha}(t)}.
\]
With $z(\tau)=(x(\tau),p(\tau),q(\tau))$, define the projected
primal--dual residual
\begin{align*}
r_{\rm pd}(\tau)=\Big(&
\norm{\nabla_x\mathcal L_{\nu,\alpha}^t(z(\tau))}^2
+\norm{\nabla_p\mathcal L_{\nu,\alpha}^t(z(\tau))}^2\\
&+\norm{[\nabla_q\mathcal L_{\nu,\alpha}^t(z(\tau))]^+_{q(\tau)}}^2
\Big)^{1/2}.
\end{align*}
In addition to the primal--dual estimate in part~(i), the tracking error
always satisfies the computable pointwise bound
\begin{equation}
e_x(\tau)
\leq\norm{z(\tau)-\hat z_{\nu,\alpha}(t)}
\leq
\greenfix{\frac{r_{\rm pd}(\tau)}{
\min\{\lambda_{\min}(\nu_1A^TA+\nu_2P),\alpha\}}}
=:\epsilon_x^{\rm res}(\tau).
\label{bound-residual-tracking}
\end{equation}
The trajectory itself satisfies
\begin{align*}
\left|f_t(x(\tau))-f_t(\hat x(t))\right|
&\leq M_t^f\bigl(\rho_t+e_x(\tau)\bigr)
 +\frac{L_t^f}{2}\bigl(\rho_t+e_x(\tau)\bigr)^2
 +\frac{\lambda_2}{2}D_t^2,\\
\max\{0,g_t^i(x(\tau))\}
&\leq M_t^g\bigl(\rho_t+e_x(\tau)\bigr)
 +\frac{L_t^g}{2}\bigl(\rho_t+e_x(\tau)\bigr)^2,
 \quad i=1,\ldots,n_2,\\
\norm{Ax(\tau)-b_t}
&\leq\norm A\bigl(\rho_t+e_x(\tau)\bigr).
\end{align*}
\end{itemize}
\end{corollary}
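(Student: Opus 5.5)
Part~(i) follows by combining earlier results, and part~(ii) comes from a strong-monotonicity (error-bound) argument together with the triangle inequality and the Taylor estimates from the proof of Proposition~\ref{prop:perturbation-bounds}.

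\textbf{Part (i).} By Proposition~\ref{prop:subop-cons}, $\mathcal L^t_{\nu,\alpha}$ is $\lambda_{\min}(\nu_1A^TA+\nu_2P)$-strongly convex in $x$ and $\alpha$-strongly concave jointly in $(p,q)$. Both moduli are independent of $t$, since $A$ and $P$ are constant. The $\mathcal C^2$ hypotheses make the gradient locally Lipschitz, and the temporal regularity is assumed. Hence Theorem~\ref{th:tracking} and Proposition~\ref{robustness} apply verbatim, with $m=\min\{\lambda_{\min}(\nu_1A^TA+\nu_2P),\alpha\}$.

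\textbf{Residual bound.} Set $\mathcal Z=\real^m\times\real^{n_1}\times\realnonneg^{n_2}$ and define the saddle operator
\[
G(z)=\bigl(\nabla_x\mathcal L^t_{\nu,\alpha}(z),\,-\nabla_p\mathcal L^t_{\nu,\alpha}(z),\,-\nabla_q\mathcal L^t_{\nu,\alpha}(z)\bigr).
\]
\begin{enumerate}
\item \emph{Strong monotonicity.} Adding the strong-convexity and strong-concavity inequalities at $z$ and $\hat z=\hat z_{\nu,\alpha}(t)$, exactly as in the proof of Theorem~\ref{th:tracking}, gives $\langle G(z)-G(\hat z),z-\hat z\rangle\ge m\norm{z-\hat z}^2$ for all $z\in\mathcal Z$.
\item \emph{Variational inequality.} The saddle point satisfies $\langle G(\hat z),w-\hat z\rangle\ge0$ for all $w\in\mathcal Z$. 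Since $q(\tau)\ge0$ by forward invariance, $z(\tau)\in\mathcal Z$, and so $m\norm{z-\hat z}^2\le\langle G(z),z-\hat z\rangle$.
\item \emph{Replacing $\nabla_q$ by its projection.} For components with $q^i>0$ the two terms coincide. For $q^i=0$ the $q$-term is $(\partial_{q^i}\mathcal L)\,\hat q^i\le\max\{\partial_{q^i}\mathcal L,0\}\,\hat q^i$, using $\hat q^i\ge0$. This gives $-\langle\nabla_q\mathcal L,q-\hat q\rangle\le-\langle[\nabla_q\mathcal L]^+_q,q-\hat q\rangle$.
\item \emph{Conclusion.} Cauchy--Schwarz then bounds $\langle G(z),z-\hat z\rangle$ by $r_{\rm pd}\norm{z-\hat z}$. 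Dividing by $\norm{z-\hat z}$, with the case $z=\hat z$ trivial, yields \eqref{bound-residual-tracking}. The first inequality $e_x\le\norm{z-\hat z}$ there is immediate.
\end{enumerate}

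\textbf{Trajectory performance bounds.} By the triangle inequality and \eqref{bound-primal-displacement},
\[
\norm{x(\tau)-\hat x_\nu(t)}\le\rho_t+e_x(\tau).
\]
Now repeat the end of the proof of Proposition~\ref{prop:perturbation-bounds}, with $x(\tau)$ in place of $\hat x_{\nu,\alpha}$ and expanding around the feasible point $\hat x_\nu\in X_t$:
\begin{itemize}
\item The Lipschitz-gradient Taylor estimate gives $|f_t(x)-f_t(\hat x_\nu)|\le M^f_t(\rho_t+e_x)+\tfrac{L^f_t}{2}(\rho_t+e_x)^2$. Adding $0\le f_t(\hat x_\nu)-f_t(\hat x)\le\tfrac{\lambda_2}{2}D_t^2$ gives the objective bound.
\item The same estimate for each $g^i_t$, together with $g^i_t(\hat x_\nu)\le0$, gives the inequality-violation bound.
\item $A\hat x_\nu=b_t$ gives $\norm{Ax-b_t}\le\norm A(\rho_t+e_x)$.
\end{itemize}

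\textbf{Main obstacle.} The only delicate step is the projected residual bound: the projection inequality must go in the right direction at components with $q^i=0$, and the variational inequality must be applied with $z(\tau)$ as the comparison point. Both rely on $q(\tau)\ge0$, which holds by the forward invariance established in Theorem~\ref{th:tracking}.
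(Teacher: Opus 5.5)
Your proposal is correct and follows the paper's proof. Part~(i) combines Proposition~\ref{prop:subop-cons} with Theorem~\ref{th:tracking} and Proposition~\ref{robustness}. Part~(ii) uses the projection inequality at components with $q^i=0$, then Cauchy--Schwarz to get $m\norm{z-\hat z_{\nu,\alpha}(t)}^2\le r_{\rm pd}\norm{z-\hat z_{\nu,\alpha}(t)}$, then the triangle-inequality and Taylor estimates around the feasible point $\hat x_\nu(t)$. The only difference is cosmetic: you obtain the contraction inequality from strong monotonicity of the saddle operator plus the first-order variational inequality at $\hat z_{\nu,\alpha}(t)$, whereas the paper reuses the function-value saddle-gap argument of Theorem~\ref{th:tracking}; both give the same modulus $m=\min\{\lambda_{\min}(\nu_1A^TA+\nu_2P),\alpha\}$.
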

\begin{proof}
Part~(i) follows from Proposition~\ref{prop:subop-cons},
Theorem~\ref{th:tracking}, and Proposition~\ref{robustness}.  For part~(ii),
write $y=(p,q)$ and
$\hat y=(\hat p_{\nu,\alpha},\hat q_{\nu,\alpha})$, all at the fixed
time $t$.  Let
\[
v_t(z)=
\begin{bmatrix}
-\nabla_x\mathcal L_{\nu,\alpha}^t(z)\\
\nabla_p\mathcal L_{\nu,\alpha}^t(z)\\
[\nabla_q\mathcal L_{\nu,\alpha}^t(z)]^+_q
\end{bmatrix},
\]
which is the right-hand side of the projected primal--dual dynamics.  The
componentwise projection satisfies the same inequality used in the proof
of Theorem~\ref{th:tracking}, namely
\[
\left\langle q-\hat q_{\nu,\alpha},
[\nabla_q\mathcal L_{\nu,\alpha}^t(z)]^+_q\right\rangle
\leq
\left\langle q-\hat q_{\nu,\alpha},
\nabla_q\mathcal L_{\nu,\alpha}^t(z)\right\rangle.
\]
Strong convexity in $x$, strong concavity in $y$, and the saddle
inequalities therefore give, exactly as in that proof,
\begin{align*}
\left\langle z-\hat z_{\nu,\alpha}(t),v_t(z)\right\rangle
&\leq \mathcal L_{\nu,\alpha}^t(\hat x_{\nu,\alpha},y)
       -\mathcal L_{\nu,\alpha}^t(x,y)
-\frac{\lambda_1}{2}\norm{x-\hat x_{\nu,\alpha}}^2\\
&\quad+\mathcal L_{\nu,\alpha}^t(x,y)
       -\mathcal L_{\nu,\alpha}^t(x,\hat y)
-\frac{\alpha}{2}\norm{y-\hat y}^2.
\end{align*}
Adding and subtracting
$\mathcal L_{\nu,\alpha}^t(\hat x_{\nu,\alpha},\hat y)$, the strong saddle
inequalities give the additional  estimate
\begin{align*}
\mathcal L_{\nu,\alpha}^t(\hat x_{\nu,\alpha},y)
-\mathcal L_{\nu,\alpha}^t(x,\hat y)
&=\mathcal L_{\nu,\alpha}^t(\hat x_{\nu,\alpha},y)
  -\mathcal L_{\nu,\alpha}^t(\hat x_{\nu,\alpha},\hat y)\\
&\quad+\mathcal L_{\nu,\alpha}^t(\hat x_{\nu,\alpha},\hat y)
  -\mathcal L_{\nu,\alpha}^t(x,\hat y)\\
&\leq-\frac{\alpha}{2}\norm{y-\hat y}^2
       -\frac{\lambda_1}{2}
        \norm{x-\hat x_{\nu,\alpha}}^2.
\end{align*}
Consequently,
\[
\left\langle z-\hat z_{\nu,\alpha}(t),v_t(z)\right\rangle
\leq-\lambda_1\norm{x-\hat x_{\nu,\alpha}}^2
       -\alpha\norm{y-\hat y}^2
\leq-\min\{\lambda_1,\alpha\}
       \norm{z-\hat z_{\nu,\alpha}(t)}^2.
\]
where $\lambda_1=\lambda_{\min}(\nu_1A^TA+\nu_2P)$.  On the other hand,
Cauchy--Schwarz gives
\[
-\left\langle z-\hat z_{\nu,\alpha}(t),v_t(z)\right\rangle
\leq\norm{z-\hat z_{\nu,\alpha}(t)}\norm{v_t(z)}.
\]
Since $\norm{v_t(z(\tau))}=r_{\rm pd}(\tau)$, cancellation (with the
zero-distance case being immediate) proves
\eqref{bound-residual-tracking}.  Next,
the triangle inequality and \eqref{bound-primal-displacement} give
\[
\norm{x(\tau)-\hat x_\nu(t)}\leq e_x(\tau)+\rho_t.
\]
Apply the same Lipschitz-gradient Taylor estimates used in the proof of
Proposition~\ref{prop:perturbation-bounds}.  Equality feasibility follows from
$A\hat x_\nu(t)=b_t$.  The remaining saddle-point bounds are precisely
those of Proposition~\ref{prop:perturbation-bounds}.  Finally, the
right-hand sides are nondecreasing functions of $e_x(\tau)\geq0$, so
\eqref{bound-residual-tracking} permits its replacement by
$\epsilon_x^{\rm res}(\tau)$.
\end{proof}
\begin{remark}[Interpretation]
The conclusion separates dynamic tracking error from static
regularization bias: Theorem~\ref{th:tracking} controls the former, while
Proposition~\ref{prop:perturbation-bounds} controls objective and
feasibility errors relative to the original problem. 
\end{remark}

\section{Local convexification for nonconvex problems}\label{sec:non_convex_setting}

We now allow $f_t$, $h_t$, and $g_t$ in
\eqref{problem_original_time_dependent} to be nonconvex. The goal is local:
we follow a regular KKT trajectory inside a neighborhood. A direct extension of
Section~\ref{sec:regularization} is not enough because KKT points of a
nonconvex problem need not be global saddle points. The
augmentation term has been used to convexify optimization problems
previously. See, for instance, \cite{DPB:75-siam}, where the term $\frac{c}{2}\norm{h_t(x)}^2$ is added to the Lagrangian in order to obtain a locally
convex Lagrangian and convex optimization algorithms can be applied
afterward. Although this approach works well in terms of
convexification, it is not enough for our tracking purposes, which
should enjoy stronger convergence properties. Another issue of maintaining the same approach as in Section~\ref{sec:regularization} is that
the computation of the kernel $\nabla h_t(x)$ at any point can be very
demanding. In
order to overcome the technical difficulties of the problem we resort
to proximal regularization, along the lines of
\cite{DPB:79,ALD-RTR:14,SMR:91}. The basic idea is to replace the
nonconvex problem locally by a family of strongly convex subproblems.

\subsection{Local model and assumptions}
The following assumptions  hold throughout this section.

\begin{assumption}[Local KKT regularity and second-order sufficiency]\label{assumption::seven}
We assume that LICQ holds along the curve of minimizers of problem~\eqref{problem_original_time_dependent}, denoted by $\hat{x}(t)$. Also, at the minimizers $\hat{x}(t)$ (and the corresponding unique Lagrange
    multipliers $\hat{p}(t)$ and $\hat{q}(t)$), the inequality 
    \[
    z^T\nabla_{xx}\mathcal{L}^t(\hat{x}(t),\hat{p}(t),\hat{q}(t))z >
    0
    \]
    holds for every nonzero $z\in\mathbb{R}^m$ such that $\nabla
      h_t^i(\hat{x}(t))^Tz = 0$ for $i\in\{1,\ldots,n_1\}$ and $
      \nabla g_t^j(\hat{x}(t))^Tz = 0$ for each $j$ such that
      $\hat{q}^j(t) > 0$. This is a strong second-order sufficient condition.
\end{assumption}

\begin{assumption}[Temporal regularity]\label{assumption::time-regularity}
Fix a compact interval $[0,t_f]$. In a neighborhood of
$\{(t,\hat x(t)):t\in[0,t_f]\}$, the functions $f_t$, $h_t$, and $g_t$,
together with their first and second derivatives with respect to $x$, are
continuous in $(t,x)$ and locally Lipschitz in $t$, uniformly with respect
to $x$ on compact subsets. The KKT curve
$t\mapsto(\hat x(t),\hat p(t),\hat q(t))$ is continuous on $[0,t_f]$.
\end{assumption}

\subsection{Local envelope and implicit solution map}

\begin{definition}[Local convexification envelope]
For $c>0$, define $\varphi_c^t(\ell)$ locally by

\begin{equation}\label{def-varphi}
\begin{aligned}
\varphi_c^t(\ell)=\min_x\quad
  &f_t(x)+\frac{1}{2c}\norm{x-\ell}^2\\
\text{s.t.}\quad &h_t(x)=0,\qquad g_t(x)\leq0.
\end{aligned}
\end{equation}
The definition is restricted to $\ell$ and local minimizers near
$\hat x(t)$. We call~\eqref{def-varphi} the $(t,\ell)$-subproblem and
write $x_c(t,\ell)$ for its locally selected minimizer.
\end{definition}
The Lagrangian of this subproblem is
\begin{equation}\label{ell-lagrangian}
  \mathcal{L}^{t,\ell}_c(x,p,q)
  =f_t(x)+p^Th_t(x)+q^Tg_t(x)
   +\frac{1}{2c}\norm{\ell-x}^2,
\end{equation}
and its Hessian with respect to the primal variable is
\begin{equation}\label{hessian-lagrangian}
  \nabla_{xx}\mathcal{L}^{t,\ell}_c(x,p,q)
  =\nabla^2f_t(x)+\sum_i p^i\nabla^2h_t^i(x)
   +\sum_i q^i\nabla^2g_t^i(x)+\frac{I}{c}.
\end{equation}
Locally, and for $c>0$ sufficiently small, the term $I/c$ dominates
 and makes the Lagrangian strongly convex in $x$.

The point $\hat x(t)$ is a local minimizer of
$\ell\mapsto\varphi_c^t(\ell)$. The construction is motivated by two
additional properties established below: the envelope has a computable
gradient and is locally strongly convex. We first isolate the
unregularized gradient formula in
Proposition~\ref{prop::gradient_computation};
Proposition~\ref{lemma::gradient_varphi} extends it to the
dual-regularized envelope.

\begin{proposition}[Computation of  $\nabla\varphi_c^t$]\label{prop::gradient_computation}
Under Assumptions~\ref{assumption::seven} and
\ref{assumption::time-regularity}, and for $c>0$ sufficiently small, the
local envelope $\varphi_c^t$ is $\mathcal C^1$ near $\hat x(t)$ and
\[
\nabla \varphi_c^t(\ell)=\frac{\ell-x_c(t,\ell)}{c}.
\]
\end{proposition}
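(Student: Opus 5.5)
The plan is to view the $(t,\ell)$-subproblem as a parametric nonlinear program in the parameter $\ell$, with $t$ held fixed. We apply the implicit function theorem to its KKT system at the base point $\ell=\hat x(t)$, and then obtain the gradient formula from an envelope (Danskin-type) argument. The first step is to check that at $\ell=\hat x(t)$ the point $(\hat x(t),\hat p(t),\hat q(t))$ is a KKT triple of the subproblem. The proximal term contributes $(\hat x(t)-\ell)/c=0$ to stationarity, so \eqref{kkt} transfers verbatim. LICQ is unaffected, since the constraints do not depend on $\ell$.

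The second step is to check that the strong second-order sufficient condition of Assumption~\ref{assumption::seven} also holds for \eqref{hessian-lagrangian}: adding $I/c$ only increases the quadratic form. Moreover, for $c$ small, $\nabla_{xx}\mathcal L_c^{t,\ell}$ is positive definite on the whole space in a neighborhood of $(\hat x(t),\hat p(t),\hat q(t))$. This holds because the Hessian of $\mathcal L^t$ is continuous, so it is bounded there, and $1/c$ can be taken larger than that bound. With LICQ and strong SOSC in hand, Robinson's strong regularity theorem applies (\cite{SMR:80}, \cite[Ch.~2]{ALD-RTR:14}). It yields neighborhoods $U\ni\hat x(t)$ and $W\ni(\hat x(t),\hat p(t),\hat q(t))$, together with a Lipschitz single-valued map $\ell\mapsto(x_c(t,\ell),p_c(t,\ell),q_c(t,\ell))$ on $U$ giving the unique KKT triple in $W$. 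The same second-order conditions make $x_c(t,\ell)$ a strict local minimizer, which is the locally selected minimizer used to define $\varphi_c^t$. If one prefers to avoid generalized equations, the alternative is to assume strict complementarity and use the classical implicit function theorem on the active-set KKT system. The Jacobian of that system is nonsingular by LICQ together with SOSC on the tangent space.

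The third step is differentiability. Write $\varphi_c^t(\ell)=\mathcal L_c^{t,\ell}(x_c,p_c,q_c)$; this equality holds because of feasibility, $h_t(x_c)=0$, and complementarity. For $\ell,\ell'\in U$ we compare the two values, using the optimality of $x_c(t,\ell)$ for the subproblem at $\ell$, with $x_c(t,\ell')$ feasible for it, and symmetrically with the roles exchanged. Because the objective depends on $\ell$ only through $\frac{1}{2c}\norm{x-\ell}^2$, this gives
\[
\tfrac{1}{2c}\norm{x_c(\ell)-\ell'}^2-\tfrac{1}{2c}\norm{x_c(\ell)-\ell}^2
\ge \varphi_c^t(\ell')-\varphi_c^t(\ell)
\ge \tfrac{1}{2c}\norm{x_c(\ell')-\ell'}^2-\tfrac{1}{2c}\norm{x_c(\ell')-\ell}^2 .
\]
This sandwich needs $x_c(\ell')$ to be a competitor for the local minimization at $\ell$. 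That is guaranteed by restricting the minimization to a fixed small ball around $\hat x(t)$ in which $x_c$ is the unique minimizer. Expanding both sides, each bound equals $\langle (\ell-x_c(\ell))/c,\ell'-\ell\rangle+O(\norm{\ell'-\ell}^2+\norm{x_c(\ell')-x_c(\ell)}\norm{\ell'-\ell})$. The Lipschitz continuity of $x_c$ makes the remainder $o(\norm{\ell'-\ell})$, so the gradient is $(\ell-x_c(t,\ell))/c$. Continuity of $x_c$ then gives $\mathcal C^1$.

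The main obstacle is the second step: securing a single-valued, Lipschitz localization of the solution map when strict complementarity may fail. It must also be checked that the \emph{local} minimizer defining $\varphi_c^t$ coincides with this KKT branch, uniformly enough that the ball used in the envelope argument can be fixed independently of $\ell\in U$. I would handle this with Robinson's theorem plus a uniqueness argument. For $c$ small the subproblem objective, restricted to a ball around $\hat x(t)$, is strongly convex on the feasible set to first order. Hence any local minimizer in the ball is KKT by LICQ and lies in $W$, and therefore equals $x_c(t,\ell)$.
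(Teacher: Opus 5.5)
Your proposal is correct. Its first half matches the paper: LICQ plus the strong second-order condition, reinforced by the $I/c$ term, gives strong regularity and a single-valued Lipschitz KKT branch via Dontchev--Rockafellar's Theorem~2G.9. The routes diverge at the differentiability step.

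The paper writes $\varphi_c^t(\ell)=\mathcal L_c^{t,\ell}(x_c,p_c,q_c)$ and differentiates this composition only at the almost-every $\ell$ where the Lipschitz selection is differentiable. It cancels the $D_\ell x_c$, $D_\ell p_c$, $D_\ell q_c$ terms using stationarity, feasibility, and a case analysis of strictly active, inactive, and weakly active inequalities. It then upgrades to differentiability everywhere by noting that the Clarke generalized gradient is the singleton $\{(\ell-x_c)/c\}$ and invoking Clarke's Proposition~2.2.4.

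You instead use the fact that the feasible set does not depend on $\ell$, and you sandwich $\varphi_c^t(\ell')-\varphi_c^t(\ell)$ between two explicit quadratic differences. This gives Fr\'echet differentiability at every point at once. It needs no Rademacher theorem, no Clarke calculus, and no multiplier bookkeeping, and it only uses continuity of $x_c$, since the remainder is at most $\frac1c\|\ell'-\ell\|\,\|x_c(t,\ell')-x_c(t,\ell)\|+\frac1{2c}\|\ell'-\ell\|^2$.

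The price is the point you flag yourself. The sandwich needs $x_c(t,\ell)$ to minimize over a fixed, $\ell$-independent region, whereas the paper works only with the KKT value along the branch and never needs such a region. Your sketch of that step (``strongly convex on the feasible set to first order'') is loose. As phrased, it also does not exclude a minimizer of the ball-restricted problem lying on the boundary of the ball.

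The clean fix uses the Hessian bound you already established. Suppose $\nabla_{xx}\mathcal L_c^{t,\ell}(\cdot,p,q)\succeq\mu I$ on a fixed closed ball $\bar B$ around $\hat x(t)$ for all $(p,q)$ near $(\hat p(t),\hat q(t))$. Then for every feasible $x\in\bar B$,
\[
f_t(x)+\frac1{2c}\|x-\ell\|^2\;\geq\;\mathcal L_c^{t,\ell}(x,p_c,q_c)\;\geq\;\mathcal L_c^{t,\ell}(x_c,p_c,q_c)+\frac\mu2\|x-x_c(t,\ell)\|^2 .
\]
The first inequality uses $h_t(x)=0$, $q_c\geq0$, and $g_t(x)\leq0$; the second uses strong convexity on $\bar B$ and stationarity at $x_c$. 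By feasibility and complementarity, $\mathcal L_c^{t,\ell}(x_c,p_c,q_c)$ equals the subproblem objective at $x_c(t,\ell)$. Hence $x_c(t,\ell)$ is the unique minimizer over the feasible set intersected with $\bar B$ for every $\ell\in U$, which is exactly what your sandwich requires.
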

\begin{proof}
Fix $t$. At the reference solution, LICQ and
Assumption~\ref{assumption::seven}, together with the added Hessian term
$I/c$ in \eqref{hessian-lagrangian}, give the strong second-order
condition for the $(t,\ell)$-subproblem. The parametric KKT result
\cite[Theorem~$2$G.$9$]{ALD-RTR:14} therefore provides locally unique,
locally Lipschitz selections $(x_c,p_c,q_c)$ as functions of $\ell$. At a KKT point,
complementarity permits writing its local value as
\[
\varphi_c^t(\ell)=\mathcal L_c^{t,\ell}(x_c,p_c,q_c).
\]
At every point where this Lipschitz selection is differentiable, the
chain rule and primal stationarity cancel the derivative multiplying
$D_\ell x_c$.  The equality residual multiplying $D_\ell p_c$ is zero.
For an inequality with $q_c^i>0$, its residual is zero; for a strictly
inactive inequality, $q_c^i$ is locally zero; and at a weakly active
switching point both $q_c^i$ and its residual are zero.  Consequently,
\[
\nabla\varphi_c^t(\ell)=\frac{\ell-x_c(t,\ell)}c
\]
at almost every $\ell$.  The right-hand side is continuous.  Since the
envelope is locally Lipschitz, its Clarke generalized gradient at every
remaining point is therefore the singleton containing this vector.
By \cite[Proposition~$2.2.4$]{FHC:83}, the envelope is strictly
differentiable there.  The continuous gradient formula proves the
$\mathcal C^1$ claim.
\end{proof}

The identity above suggests an outer gradient flow
$\dot\ell=-\nabla\varphi_c^t(\ell)$. Computing its gradient exactly would
require solving a new $(t,\ell)$-subproblem at every instant. Instead, we
run an inner primal--dual flow whose primal state approximates
$x_c(t,\ell)$, and use that state to form an inexact outer direction. To
obtain a contracting inner saddle flow, we regularize only the dual
variables and define
\[
\mathcal{L}^{t,\ell}_{c,\alpha}(x,p,q) = f_t(x) + p^Th_t(x) +
q^Tg_t(x) + \displaystyle\frac{1}{2c}\norm{\ell - x}^2
-\displaystyle\frac{\alpha^2}{2}\norm{p}^2
-\displaystyle\frac{\alpha^2}{2}\norm{q}^2.
\]
The square in $\alpha^2$ makes the dependence on the perturbation parameter
symmetric around zero while providing concavity $\alpha^2$.

We base our constructions on the application of a recent generalization of the Implicit Function Theorem  (IFT) that includes generalized equations, see~\cite{ALD-RTR:14}. An
important feature, in contrast to the standard IFT, is that the
 obtained functions typically lack differentiability but often exhibit
 Lipschitz continuity. Particularly illuminating are the references
\cite{ALD-RTR:14,JG-DN-FG:01}.  For the sake of clarity of the
exposition, we define
\[
D(x,p,q,t,\ell,\alpha) =
\left(\displaystyle\frac{\partial\mathcal{L}^{t,\ell}_{c,\alpha}}{\partial
    x}(x,p,q),-\displaystyle\frac{\partial\mathcal{L}^{t,\ell}_{c,\alpha}}{\partial
    p}(x,p,q),-\displaystyle\frac{\partial\mathcal{L}^{t,\ell}_{c,\alpha}}{\partial
    q}(x,p,q)\right).
\]
Assumption~\ref{assumption::time-regularity} implies that $D$ and its
linearization with respect to $(x,p,q)$ are locally Lipschitz in $t$,
uniformly on compact subsets of the remaining variables.
Set
\[
C=\mathbb R^m\times\mathbb R^{n_1}\times
  \mathbb R_{\geq0}^{n_2}.
\]
\begingroup
\medskip
\noindent\textit{Projection and normal-cone formulation.}
For $q\in\mathbb R_{\geq0}^{n_2}$, the projected dual equilibrium
condition is equivalently
\[
\bigl[\nabla_q\mathcal L_{c,\alpha}^{t,\ell}(x,p,q)\bigr]^+_q=0
\quad\Longleftrightarrow\quad
0\in-\nabla_q\mathcal L_{c,\alpha}^{t,\ell}(x,p,q)
  +N_{\mathbb R_{\geq0}^{n_2}}(q).
\]
More generally,
\[
\nabla_q\mathcal L_{c,\alpha}^{t,\ell}(x,p,q)
-\bigl[\nabla_q\mathcal L_{c,\alpha}^{t,\ell}(x,p,q)\bigr]^+_q
\in N_{\mathbb R_{\geq0}^{n_2}}(q).
\]
Consequently, the inclusion
$D(x,p,q,t,\ell,\alpha)+N_C(x,p,q)\ni0$ used below is precisely the
normal-cone representation of the equilibrium conditions of the projected
primal--dual dynamics.
\medskip
\endgroup
\begin{proposition}[Local implicit solution map]
\label{prop:implicit-solution-map}

  Consider a continuous curve of local minimizers $\hat{x}(t)$ and its Lagrange multipliers, $\hat p(t)$ and $\hat q(t)$,
  satisfying Assumptions~\ref{assumption::seven} and
  \ref{assumption::time-regularity}.
  For $c>0$ sufficiently small, the variational inclusion
  \begin{equation}\label{equation::variational}
    D(x,p,q,t,\ell,\alpha) + N_C(x,p,q) \ni 0 
  \end{equation}
  admits a locally unique solution near
  $(\hat x(t),\hat p(t),\hat q(t))$. This solution is locally Lipschitz in
  the parameters $(t,\ell,\alpha)$ near $(t,\hat x(t),0)$ and is denoted
  by $\inv{x}(t,\ell,\alpha),\inv{p}(t,\ell,\alpha)$, and
  $\inv{q}(t,\ell,\alpha)$.
\end{proposition}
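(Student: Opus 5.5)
The plan is to apply the implicit function theorem for generalized equations, \cite[Theorem~$2$G.$9$]{ALD-RTR:14}, together with its strong-regularity form \cite{SMR:80}, to the inclusion \eqref{equation::variational}. The unknowns are $z=(x,p,q)$ and the parameters are $(t,\ell,\alpha)$. The base point is $\bar z=(\hat x(t_0),\hat p(t_0),\hat q(t_0))$ with parameters $(t_0,\hat x(t_0),0)$.

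\textbf{Base point.} First I check that $\bar z$ solves the inclusion at these parameters. At $\ell=\hat x(t_0)$ the proximal term $\frac{1}{2c}\norm{\ell-x}^2$ has zero gradient at $x=\hat x(t_0)$. At $\alpha=0$ the dual penalty disappears. Hence $D(\bar z,t_0,\hat x(t_0),0)+N_C(\bar z)\ni0$ reduces exactly to the KKT system \eqref{kkt}, written through the normal-cone reformulation stated just before the proposition.

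\textbf{Strong regularity.} Next I verify that the linearized inclusion $D(\bar z)+\nabla_zD(\bar z)(z-\bar z)+N_C(z)\ni w$ has a single-valued Lipschitz solution map in $w$ near $0$. The Jacobian $\nabla_z D$ has the KKT block structure: the upper-left block is $\nabla_{xx}\mathcal L^t+I/c$, and the off-diagonal blocks are $\pm\nabla h_t$ and $\pm\nabla g_t$. Its dual diagonal block is $\alpha^2 I$, which vanishes at $\alpha=0$. The normal cone acts only on $q$.

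By Robinson's characterization, strong regularity of such a KKT generalized equation follows from two conditions. The first is LICQ at $\hat x(t_0)$. The second is positive definiteness of the Hessian block on the subspace orthogonal to the gradients of the equality constraints and the strongly active inequalities. Both are supplied by Assumption~\ref{assumption::seven}, and adding $I/c$ only strengthens the second. In fact, for $c$ small enough the Hessian block $\nabla_{xx}\mathcal L^t+I/c$ is positive definite on all of $\real^m$ near $\bar z$. This ``for $c$ sufficiently small'' clause is what gives a uniform margin later.

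\textbf{Parametric regularity.} For Theorem~2G.9 I also need $D$ to be strictly differentiable in $z$ and Lipschitz in the parameters, uniformly near the base point. $D$ is affine in $\ell$ and quadratic (so smooth) in $\alpha$. In $t$, Assumption~\ref{assumption::time-regularity} gives local Lipschitz dependence of $D$ and of $\nabla_zD$, uniformly in $z$ on compacts, as noted just before the proposition. Note that $D$ is not differentiable in $t$. I will therefore use the version of the theorem that requires only a Lipschitz modulus in the parameter together with strict differentiability in $z$ (2B/2G style). This yields a localization $s(t,\ell,\alpha)$ of the solution map that is single-valued and Lipschitz near $(t_0,\hat x(t_0),0)$, with a constant controlled by the regularity modulus times the parameter Lipschitz modulus.

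\textbf{Main obstacle.} I expect the hardest part to be locality along the curve: making the result hold near every $t\in[0,t_f]$ and identifying the localization with the one centered at $(\hat x(t),\hat p(t),\hat q(t))$. The concern is that the radii could shrink as $t_0$ varies. I plan to handle this through compactness of $[0,t_f]$ and continuity of the KKT curve. The strong-regularity modulus is upper semicontinuous in the data, so a finite cover gives uniform neighborhoods and a uniform choice of $c$. Local uniqueness then makes the overlapping localizations agree, which produces $\inv{x},\inv{p},\inv{q}$. Finally, at $(t,\hat x(t),0)$ the curve point $(\hat x(t),\hat p(t),\hat q(t))$ itself solves the inclusion, so by uniqueness the solution map passes through the KKT curve.
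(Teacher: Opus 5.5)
Your proposal is correct and follows essentially the same route as the paper's proof. You identify the inclusion at $(t_0,\hat x(t_0),0)$ with the KKT generalized equation of the proximal subproblem, get strong regularity from LICQ and the strong second-order condition (only reinforced by $I/c$) via \cite[Theorem~2G.9]{ALD-RTR:14}, apply the Lipschitz-in-parameter generalized implicit function theorem using Assumption~\ref{assumption::time-regularity}, and glue the local maps along $[0,t_f]$ by a finite cover with local uniqueness on overlaps.
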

\begin{proof}
Fix $t_0\in[0,t_f]$ and set
\[
u_0=(\hat x(t_0),\hat p(t_0),\hat q(t_0)),
\qquad \theta_0=(t_0,\hat x(t_0),0).
\]
At $(u_0,\theta_0)$, inclusion~\eqref{equation::variational} is the KKT
generalized equation for the proximal subproblem.  Its Lagrangian Hessian
in the primal variable is
\[
\nabla_{xx}^2\mathcal L^{t_0}(u_0)+\frac Ic.
\]
LICQ and Assumption~\ref{assumption::seven} therefore satisfy the strong
second-order condition in
\cite[Theorem~$2$G.$9$]{ALD-RTR:14}; equivalently, the linearized KKT
generalized equation has a Lipschitz single-valued inverse, as characterized
in \cite[Theorem~$2$G.$8$]{ALD-RTR:14}.  Assumption~
\ref{assumption::time-regularity} then permits applying the generalized
Implicit Function Theorem \cite[Theorem~$2$B.$7$]{ALD-RTR:14}, which yields
a locally unique solution $u^{\rm imp}(t,\ell,\alpha)$, locally Lipschitz in
its parameters.

Because the reference solution graph is compact and the second-order and
LICQ conditions are open under the stated continuity assumptions, a
finite-cover argument permits common choices of $c$, parameter radii, and
Lipschitz constants along $[0,t_f]$, after shrinking the neighborhoods if
necessary.  Local uniqueness makes the selections agree on overlaps.
This constructs the asserted implicit map along the full compact curve.
\end{proof}
The mappings $\inv{x}$, $\inv{p}$, and $\inv{q}$ describe the locally
selected saddle point of $\mathcal{L}^{t,\ell}_{c,\alpha}$ when
$(t,\ell,\alpha)$ are treated as parameters. For compactness, write
\[
 z^{\rm imp}(t,\ell,\alpha)
 :=\bigl(\inv{x}(t,\ell,\alpha),\inv{p}(t,\ell,\alpha),
          \inv{q}(t,\ell,\alpha)\bigr).
\]
The positive design parameters $c$ and $\alpha$ will be selected inside
the uniform neighborhoods constructed below.

\begin{definition}[Dual-regularized local envelope]
For sufficiently small $\alpha>0$, define $\varphi^t_{c,\alpha}$ in a
neighborhood of $\hat x(t)$ by
  \begin{align}
    \varphi^t_{c,\alpha}(\ell)
    &= f_t(\inv{x}(t,\ell,\alpha))
     + \inv{p}(t,\ell,\alpha)^Th_t(\inv{x}(t,\ell,\alpha))
       \label{equation::varphi}\\
    &\quad + \inv{q}(t,\ell,\alpha)^Tg_t(\inv{x}(t,\ell,\alpha))
     + \frac{1}{2c}\norm{\inv{x}(t,\ell,\alpha)-\ell}^2\nonumber\\
    &\quad - \frac{\alpha^2}{2}\norm{\inv{p}(t,\ell,\alpha)}^2
     - \frac{\alpha^2}{2}\norm{\inv{q}(t,\ell,\alpha)}^2.\nonumber
  \end{align}
\end{definition}
\begin{remark}
Note that $\varphi^t_{c,0}(\ell) = \varphi_{c}^t(\ell)$.  \oprocend
\end{remark}

\begin{proposition}[Local envelope regularity]\label{lemma::gradient_varphi}
Under Assumptions~\ref{assumption::seven} and
\ref{assumption::time-regularity}, and for $c>0$ sufficiently small,
\greenfix{there exist $\bar\alpha>0$, $\sigma>0$, and a neighborhood
radius independent of $t$ and $\alpha$ such that, for every
$t\in[0,t_f]$ and $0\leq\alpha\leq\bar\alpha$,}
$\varphi^t_{c,\alpha}$ satisfies the following properties in that
neighborhood of $\hat x(t)$.
\begin{enumerate}
\item  $\varphi^t_{c,\alpha}$ is $\mathcal{C}^1$ and verifies $ \nabla
  \varphi^t_{c,\alpha}(\ell) = \displaystyle\frac{\ell - \inv{x}(t,\ell,\alpha)}{c}$.  

\item  \greenfix{$\varphi^t_{c,\alpha}$ is $\sigma$-strongly convex in
  this common neighborhood} (and hence strictly convex there).

\item  $\varphi^t_{c,\alpha}$ has a locally unique critical point
  $\hat x_\alpha(t)$ close to $\hat x_0(t)=\hat x(t)$.  It depends
  locally Lipschitz-continuously on $\alpha$ and satisfies
  $\hat x_\alpha(t)=\inv{x}(t,\hat x_\alpha(t),\alpha)$.
\end{enumerate}
\end{proposition}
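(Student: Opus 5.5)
Item~1 follows the argument of Proposition~\ref{prop::gradient_computation}, now applied to the regularized Lagrangian $\mathcal L^{t,\ell}_{c,\alpha}$. By Proposition~\ref{prop:implicit-solution-map}, $z^{\rm imp}(t,\cdot,\alpha)$ is locally Lipschitz, and $\varphi^t_{c,\alpha}(\ell)=\mathcal L^{t,\ell}_{c,\alpha}(z^{\rm imp}(t,\ell,\alpha))$ by~\eqref{equation::varphi}, so $\varphi^t_{c,\alpha}$ is locally Lipschitz. At any $\ell$ where $z^{\rm imp}$ is differentiable, we use the chain rule:
\begin{itemize}
\item the term multiplying $D_\ell\inv{x}$ vanishes by primal stationarity;
\item the term multiplying $D_\ell\inv{p}$ vanishes because $\nabla_p\mathcal L^{t,\ell}_{c,\alpha}=h_t-\alpha^2p=0$ at the solution;
\item for $\inv{q}$, either the component is locally zero (strictly inactive) or its residual $g^i_t-\alpha^2 q^i$ is zero (active or weakly active), so the product with $D_\ell\inv{q}$ vanishes a.e.
\end{itemize}
The remaining explicit derivative is $(\ell-\inv{x})/c$. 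This holds almost everywhere and the expression is continuous. Clarke's gradient is therefore a singleton everywhere, and \cite[Proposition~2.2.4]{FHC:83} gives $\mathcal C^1$. Uniformity of the neighborhood in $t$ and $\alpha\in[0,\bar\alpha]$ comes from the finite-cover construction already in Proposition~\ref{prop:implicit-solution-map}, with $\alpha$ treated as a parameter near $0$.

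For item~2, I would show that $\ell\mapsto\ell-\inv{x}(t,\ell,\alpha)$ is strongly monotone with modulus $c\sigma$. This requires bounding the Lipschitz constant of $\ell\mapsto\inv{x}$ by some $\beta<1$, uniformly. The plan is a monotonicity comparison of two solutions $u_1,u_2$ at parameters $\ell_1,\ell_2$.

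Write $\mathcal L^{t,\ell}_{c,\alpha}=\Phi(x,y)+\frac1{2c}\norm{x-\ell}^2$, where $\Phi$ collects the remaining terms. Testing the variational inclusion~\eqref{equation::variational} at $u_1$ against $u_2$ and vice versa, and adding, gives
\[
\langle\nabla_x\Phi(u_1)-\nabla_x\Phi(u_2),x_1-x_2\rangle-\langle\nabla_y\Phi(u_1)-\nabla_y\Phi(u_2),y_1-y_2\rangle+\tfrac1c\norm{x_1-x_2}^2\le\tfrac1c\langle\ell_1-\ell_2,x_1-x_2\rangle.
\]
The $y$-terms coupling through $\nabla h,\nabla g$ cancel to first order, since the cross terms are antisymmetric. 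The $-\alpha^2$ dual terms contribute with a favorable sign. The remaining error is quadratic in $\norm{u_1-u_2}$ times the neighborhood radius, and is controlled by the Lipschitz bound on $z^{\rm imp}$.

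What is left is $\langle\nabla_{xx}^2\mathcal L^t(\hat u)(x_1-x_2),x_1-x_2\rangle\ge-\mu\norm{x_1-x_2}^2$, where $\mu$ bounds the negative curvature of the Hessian near the curve. This yields
\[
(1/c-\mu-o(1))\norm{x_1-x_2}^2\le\tfrac1c\norm{\ell_1-\ell_2}\norm{x_1-x_2},
\]
so $\beta\le 1/(1-c\mu-o(1))$, which exceeds $1$. This is the main obstacle: the crude monotonicity bound only gives $\beta$ slightly above $1$, whereas strong convexity needs $\beta<1$.

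The fix I would try is to exploit the second-order sufficiency of Assumption~\ref{assumption::seven} on the critical subspace, together with the fact that $\inv{x}$ moves only within the (linearized) constraint manifold. Concretely, at differentiability points compute $J=D_\ell\inv{x}$ via the linearized KKT system, whose inverse exists by \cite[Theorem~2G.8]{ALD-RTR:14}. On the critical subspace $T$ with orthogonal projector $\Pi_T$, this gives $J=\Pi_T(\Pi_T(cH+I)\Pi_T)^{-1}\Pi_T$, where $H$ is the reduced Hessian. The reduced Hessian is positive definite, $H\succeq h_0I$ on $T$, which yields $\norm{J}\le 1/(1+ch_0)<1$.

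Hence $I-J\succeq\frac{ch_0}{1+ch_0}I$ in the symmetric sense, using that $J$ is symmetric PSD. Applying this along segments and using the Clarke mean-value theorem, together with continuity in $(t,\alpha)$ and the finite cover, gives a uniform $\sigma=h_0/(2(1+ch_0))$ for $\alpha\le\bar\alpha$. Switching of active sets only changes $T$ among finitely many pieces, each satisfying strong SOSC, so the bound survives.

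For item~3, $\nabla\varphi^t_{c,\alpha}$ is continuous and strongly monotone on a ball around $\hat x(t)$. At $\alpha=0$, $\hat x(t)$ is a zero, since $\inv{x}(t,\hat x(t),0)=\hat x(t)$. The map $(\ell,\alpha)\mapsto(\ell-\inv{x})/c$ is Lipschitz in $\alpha$, so its zero persists. I would show this via the strongly monotone inverse: a Lipschitz perturbation of size $L|\alpha|$ moves the unique zero by at most $L|\alpha|/\sigma$, and that zero stays in the ball for $\bar\alpha$ small. The zero is unique by strong monotonicity, and the fixed-point identity $\hat x_\alpha(t)=\inv{x}(t,\hat x_\alpha(t),\alpha)$ is exactly $\nabla\varphi^t_{c,\alpha}=0$.
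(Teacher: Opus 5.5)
\textbf{Items 1 and 3.} These follow the paper. In item~1 you keep the multipliers and kill the $D_\ell\inv{p}$ and $D_\ell\inv{q}$ terms using $h_t-\alpha^2\inv{p}=0$ and the complementarity case split. The paper instead first eliminates the duals into the penalty $\psi_\alpha^t$. Both arguments close with Clarke's Proposition~2.2.4. Item~3 is the same strongly monotone implicit-function argument.

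\textbf{Item 2 takes a genuinely different route.} The paper works directly at $\alpha>0$. On each active-set piece the Schur complement gives $D_\ell\inv{x}=(I+cN_{\mathcal A})^{-1}$ with $N_{\mathcal A}=H+M_{\mathcal A}M_{\mathcal A}^T/\alpha^2$. Debreu's lemma then turns second-order sufficiency on $\ker M_{\mathcal S_0}^T$ into $N_{\mathcal A}\succeq\gamma I$ for all $\alpha\le\rho_0^{-1/2}$, so the penalty Gram term supplies the normal-direction curvature with an explicit threshold. The case $\alpha=0$ is then obtained by letting $\alpha\downarrow0$ in the strong-monotonicity inequality, which is closed under pointwise limits. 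You instead compute the hard-constrained Jacobian $J_0=\Pi_T(\Pi_T(I+cH)\Pi_T)^{-1}\Pi_T$ at $\alpha=0$. This treats $\alpha=0$ directly and explains the contraction geometrically: the step shrinks the critical subspace by at least $1/(1+ch_0)$ and kills its complement. You then reach $\alpha>0$ by perturbation, at the cost of a non-explicit $\bar\alpha$.

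\textbf{The perturbation step is not yet justified.} First, the claim that $\inv{x}$ moves only within the linearized constraint manifold holds only at $\alpha=0$. For $\alpha>0$, $h_t(\inv{x})=\alpha^2\inv{p}\neq0$ and $D_\ell\inv{x}$ has a nonzero normal component, so your formula is not the Jacobian you need.

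Second, ``continuity in $(t,\alpha)$'' cannot mean continuity of $z^{\rm imp}$. Set $G_\alpha(\ell):=(\ell-\inv{x}(t,\ell,\alpha))/c$. The bound $\norm{\inv{x}(t,\ell,\alpha)-\inv{x}(t,\ell,0)}\le L\alpha$ only gives $\langle G_\alpha(\ell_1)-G_\alpha(\ell_0),\ell_1-\ell_0\rangle\ge\sigma\norm{\ell_1-\ell_0}^2-(2L\alpha/c)\norm{\ell_1-\ell_0}$. This says nothing for nearby $\ell_0,\ell_1$. Strong monotonicity passes from $\alpha>0$ to $\alpha=0$ under convergence of values (the paper's direction), but not conversely.

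\textbf{How to close it.} What works is continuity at the level of Jacobians. On a smooth piece with current positive set $\mathcal A$,
\[
\begin{bmatrix} H+I/c & M_{\mathcal A}\\ M_{\mathcal A}^T & -\alpha^2 I\end{bmatrix}
\begin{bmatrix} D_\ell\inv{x}\\ \ast\end{bmatrix}
=\begin{bmatrix} I/c\\ 0\end{bmatrix}.
\]
At the reference point with $\alpha=0$, this bordered matrix is nonsingular for each of the finitely many $\mathcal A$ with $\mathcal S_0\subseteq\mathcal A\subseteq\{i:g_t^i(\hat x(t_0))=0\}$. This follows from LICQ and second-order sufficiency on $\ker M_{\mathcal A}^T\subseteq\ker M_{\mathcal S_0}^T$.

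Here $-\alpha^2I$ is a regular perturbation, unlike the singular $1/\alpha^2$ dependence of $N_{\mathcal A}$, which is why the paper uses Debreu rather than continuity. Hence $D_\ell\inv{x}$ is symmetric and lies within $\frac{ch_0}{2(1+ch_0)}$ of your $J_0^{\mathcal A}$ on a small enough parameter neighborhood. This yields your $\sigma=h_0/(2(1+ch_0))$. With this written out, your route is complete.
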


\begin{proof}
We separate the argument according to the three conclusions. The
active-set and block-matrix details are given in
Appendix~\ref{app:envelope-derivation}.
Fix $t$ and, for each admissible $\alpha$, abbreviate
$(\inv{x},\inv{p},\inv{q})
=(\inv{x}(t,\ell,\alpha),\inv{p}(t,\ell,\alpha),
  \inv{q}(t,\ell,\alpha))$.

\smallskip
\noindent\emph{Step 1: gradient formula.}
For $\alpha>0$, dual optimality gives
\[
 \inv{p}=\frac{h_t(\inv{x})}{\alpha^2},
 \qquad
 \inv{q}=\frac{(g_t(\inv{x}))_+}{\alpha^2}.
\]
Define
\[
 \psi_\alpha^t(x)
 =f_t(x)+\frac{\norm{h_t(x)}^2}{2\alpha^2}
          +\frac{\norm{(g_t(x))_+}^2}{2\alpha^2}.
\]
Eliminating the dual variables shows that, along the selected local branch,
\[
 \varphi_{c,\alpha}^t(\ell)
 =\psi_\alpha^t(\inv{x})
  +\frac{1}{2c}\norm{\inv{x}-\ell}^2.
\]
Here $\inv{x}$ is the local minimizer near $\hat x(t)$ of
\[
 x\longmapsto \psi_\alpha^t(x)+\frac{1}{2c}\norm{x-\ell}^2,
\]
and therefore satisfies
\[
 \nabla\psi_\alpha^t(\inv{x})+\frac{\inv{x}-\ell}{c}=0.
\]
The chain-rule calculation in
\eqref{eq:regularized-envelope-chain-rule} and the subsequent extension
through the nondifferentiability points of the locally Lipschitz implicit
map give
\[
 \nabla\varphi_{c,\alpha}^t(\ell)
 =\frac{\ell-\inv{x}(t,\ell,\alpha)}c.
\]
For $\alpha=0$, this is Proposition~\ref{prop::gradient_computation};
Proposition~\ref{prop:implicit-solution-map} ensures compatibility of the
two formulas as $\alpha\downarrow0$.

\smallskip
\noindent\emph{Step 2: uniform local strong convexity.}
For $\alpha>0$, define the current positive-multiplier set by
\[
 \mathcal A=\mathcal A_+(t,\ell,\alpha)
 :=\bigl\{i\in\{1,\ldots,n_2\}:(\inv{q})^i>0\bigr\},
\]
and let $g_t^{\mathcal A}$ denote the subvector of $g_t$ indexed by
$\mathcal A$.  On any smooth region on which $\mathcal A$ is fixed, define
\[
\begin{aligned}
 H&=\nabla_{xx}^2\mathcal L^t(\inv{x},\inv{p},\inv{q}),\\
 M_{\mathcal A}
 &=\bigl[\nabla h_t(\inv{x})\ \ \nabla g_t^{\mathcal A}(\inv{x})\bigr],\\
 N_{\mathcal A}
 &=H+\frac{M_{\mathcal A}M_{\mathcal A}^T}{\alpha^2}.
\end{aligned}
\]
Differentiating the KKT system on that region gives the following
sensitivity formulas; see \eqref{current-active-sensitivity} for the
block-matrix derivation:
\[
 D_\ell\inv{x}=(I+cN_{\mathcal A})^{-1},
 \qquad
 D_\ell\nabla\varphi_{c,\alpha}^t
 =N_{\mathcal A}(I+cN_{\mathcal A})^{-1}.
\]
Apply the Debreu quadratic-form implication
\eqref{eq:debreu-implication} as follows.  Fix $t_0$ and set
$\mathcal S_0=\{i:\hat q^i(t_0)>0\}$.  Let $M_{\mathcal S_0}$ be
$M_{\mathcal A}$ with its inequality-gradient columns restricted to
$\mathcal S_0$, and evaluate
$(H_0,M_0)=(H,M_{\mathcal S_0})$ at
$(t,\ell,\alpha)=(t_0,\hat x(t_0),0)$.
Assumption~\ref{assumption::seven} says exactly that
$d^TH_0d>0$ for every $0\ne d\in\ker(M_0^T)$.  Debreu's implication
therefore provides $\rho_0,\gamma_0>0$ such that
\[
 H_0+\rho_0M_0M_0^T\succeq2\gamma_0I.
\]
Continuity then gives, on a sufficiently small parameter neighborhood,
\[
 H+\rho_0M_{\mathcal S_0}M_{\mathcal S_0}^T\succeq\gamma_0I,
 \qquad \mathcal S_0\subseteq\mathcal A.
\]
Therefore, whenever $\alpha\leq\rho_0^{-1/2}$,
\[
 N_{\mathcal A}
 =H+\frac{M_{\mathcal A}M_{\mathcal A}^T}{\alpha^2}
 \succeq H+\rho_0M_{\mathcal S_0}M_{\mathcal S_0}^T
 \succeq\gamma_0I.
\]
A finite cover of the compact reference curve, followed by taking the
smallest $\gamma_0$ and the smallest admissible upper bound on $\alpha$,
gives $N_{\mathcal A}\succeq\gamma I$ uniformly.  Recalling the
sensitivity formula above, on every smooth active-set region,
\[
 D_\ell\nabla\varphi_{c,\alpha}^t
 =N_{\mathcal A}(I+cN_{\mathcal A})^{-1}
 \succeq \frac{\gamma}{1+c\gamma}I=:\sigma I.
\]
The inequality follows from the monotonicity of
$\lambda\mapsto\lambda/(1+c\lambda)$; the spectral calculation is given
after \eqref{current-active-sensitivity}.  Thus the Debreu step turns the
second-order curvature on the constraint-tangent directions into a
uniform positive lower bound for the differential of the outer gradient.
At an active-set switch, the generalized Jacobian is formed from limits
and convex combinations of these matrices, so the same lower bound is
preserved. Continuity of the implicit map then extends the
strong-monotonicity inequality to $\alpha=0$.

\smallskip
\noindent\emph{Step 3: critical-point branch.}
Set
\[
 G(t,\ell,\alpha)
 :=\nabla\varphi_{c,\alpha}^t(\ell)
 =\frac{\ell-\inv{x}(t,\ell,\alpha)}c.
\]
By Step~2, $G(t,\cdot,\alpha)$ is uniformly strongly monotone, while
Proposition~\ref{prop:implicit-solution-map} supplies the required local
parameter regularity. Since $G(t,\hat x(t),0)=0$, the Implicit Function
Theorem for strongly monotone mappings gives a locally unique critical
point $\hat x_\alpha(t)$ depending locally Lipschitz-continuously on
$\alpha$. Finally, $G(t,\hat x_\alpha(t),\alpha)=0$ is equivalent to
\[
 \hat x_\alpha(t)
 =\inv{x}(t,\hat x_\alpha(t),\alpha).
\]
\end{proof}

\begin{remark}[Interpretation]
The proximal envelope converts a local constrained KKT branch
into a differentiable, locally convex outer objective. Its gradient is the
gap between the parameter $\ell$ and the implicit primal solution, divided
by $c$.
\end{remark}

\subsection{Two-time-scale flow and local tracking}
\label{sec:two-time-scale-flow}
We now combine the evolution of the primal--dual dynamics for the
Lagrangians $\mathcal{L}^{t,\ell}_{c,\alpha}$ with the evolution of $\ell$ given by a perturbed gradient descent. This way, while the
variables $(x,p,q)$ chase the solution of the $(t,\ell)$-problem,
$\ell$ tracks the solution of the regularized $t$-problem
$\hat{x}_{\alpha}(t)$, although these two dynamics will run at different
speed.  The convexity obtained in
Proposition~\ref{lemma::gradient_varphi} is local and implicit.  To impose
a prescribed uniform lower bound for the outer dynamics, we add the
regularization
\[
\varphi^t_{c,\alpha,\nu}(\ell)
=\varphi^t_{c,\alpha}(\ell)+\frac{\nu}{2}\norm{\ell}^2.
\]
For sufficiently small $\nu>0$, denote its locally selected critical
point by $\hat{x}_{\alpha,\nu}(t)$. \greenfix{The following remark records
the regularization bias needed below.}

\begingroup
\begin{remark}[Regularization bias]
\label{rem:nonconvex-regularization-bias}
The generalized Implicit Function Theorem used in
Proposition~\ref{prop:implicit-solution-map}, together with the local
strong convexity in Proposition~\ref{lemma::gradient_varphi}, makes the
selected outer critical point locally Lipschitz with respect to
perturbations of its defining equation.  Here
\[
 D(x,p,q,t,\ell,\alpha)-D(x,p,q,t,\ell,0)
 =\alpha^2(0,p,q),
\]
so the perturbation produced by dual regularization is $O(\alpha^2)$ on
the common local neighborhood.  The outer critical-point equation
\[
 \nabla\varphi_{c,\alpha}^t(\ell)+\nu\ell=0
\]
depends linearly on $\nu$.  Compactness of the reference curve therefore
gives uniform constants $C_\alpha,C_\nu>0$ and sufficiently small
parameter ranges such that
\[
 \norm{\hat x_\alpha(t)-\hat x(t)}\leq C_\alpha\alpha^2,
 \qquad
 \norm{\hat x_{\alpha,\nu}(t)-\hat x_\alpha(t)}\leq C_\nu\nu
\]
for every $t\in[0,t_f]$.  Consequently,
\begin{equation}
 \norm{\hat x_{\alpha,\nu}(t)-\hat x(t)}
 \leq C_\alpha\alpha^2+C_\nu\nu.
 \label{nonconvex-total-bias}
\end{equation}
After reducing the parameter ranges if necessary, both selected branches
remain inside the common local neighborhoods used below.
\end{remark}
\endgroup

For an admissible fixed $\alpha>0$, we abbreviate
$z^{\rm imp}(t,\ell):=z^{\rm imp}(t,\ell,\alpha)$ and set
\[
 z=(x,p,q),\qquad \hat\ell(t)=\hat x_{\alpha,\nu}(t),
\]
The local analysis directly tracks
\[
 \underbrace{\norm{\ell(t)-\hat\ell(t)}}_{\text{outer error}},\qquad
 \underbrace{\norm{z(t)-z^{\rm imp}(t,\ell(t))}}_{\text{inner error}}.
\]
We consider the coupled dynamics
\begin{align}
  \begin{split}\label{dynamics}
    \epsilon_1 \dot{x} &= -\displaystyle\frac{\partial
      \mathcal{L}^{t,\ell}_{c,\alpha}}{\partial x},
    \\
    \epsilon_1 \dot{p} &= \displaystyle\frac{\partial
      \mathcal{L}^{t,\ell}_{c,\alpha}}{\partial p},
    \\
    \epsilon_1 \dot{q} &= [\displaystyle\frac{\partial
      \mathcal{L}^{t,\ell}_{c,\alpha}}{\partial q}]^+_q,
    \\
    \epsilon_2\dot{\ell} &= \displaystyle\frac{x - \ell}{c} -\nu \ell.
  \end{split}
\end{align}
Notice that $\ell$ does not evolve following
$-\nabla\varphi^t_{c,\alpha,\nu}$, as we are using $x$ as opposed to
$\inv{x}$, and $x$ is evolving following the primal-dual
dynamics.
Figure~\ref{fig:two-time-scale-architecture} summarizes this feedback
architecture and the two errors used in the local analysis.

\begin{figure}[t]
\centering
\begin{tikzpicture}[
  font=\footnotesize,
  data/.style={draw=black!65, rounded corners=2pt, fill=black!3,
    align=center, text width=4.7cm, minimum height=0.85cm,
    inner sep=4pt},
  target/.style={draw=guidegreen, rounded corners=2pt, fill=guidegreen!7,
    align=center, text width=4.55cm, minimum height=0.9cm,
    inner sep=4pt},
  outerstate/.style={draw=guideblue, very thick, rounded corners=3pt,
    fill=guideblue!7, align=center, text width=4.75cm,
    minimum height=1.35cm, inner sep=5pt},
  innerstate/.style={draw=guideorange, very thick, rounded corners=3pt,
    fill=guideorange!8, align=center, text width=5.1cm,
    minimum height=1.35cm, inner sep=5pt},
  energy/.style={draw=black!55, rounded corners=2pt, fill=black!2,
    align=center, text width=10.6cm, minimum height=0.8cm,
    inner sep=5pt},
  outerpanel/.style={draw=guideblue!70, rounded corners=5pt,
    fill=guideblue!2, inner sep=9pt},
  innerpanel/.style={draw=guideorange!75, rounded corners=5pt,
    fill=guideorange!2, inner sep=9pt},
  tubebox/.style={draw=black!55, densely dashed, rounded corners=7pt,
    inner sep=12pt},
  dataarrow/.style={-{Latex[length=2mm]}, draw=black!65, thick},
  outerarrow/.style={-{Latex[length=2.2mm]}, draw=guideblue, very thick},
  innerarrow/.style={-{Latex[length=2.2mm]}, draw=guideorange, very thick},
  dependency/.style={-{Latex[length=2mm]}, draw=guidegreen!85!black,
    thick},
  feedback/.style={-{Latex[length=2.2mm]}, draw=guideblue!80!black,
    very thick}
]
\node[data] (problem) {\textbf{moving problem data}\\
  $t\mapsto(f_t,h_t,g_t)$};

\node[target, below=1.0cm of problem, xshift=-3.1cm] (outertarget)
  {\textbf{moving outer target}\\
   $\hat\ell(t)=\hat x_{\alpha,\nu}(t)$};
\node[target, below=1.0cm of problem, xshift=3.1cm] (innertarget)
  {\textbf{instantaneous inner target}\\
   $z^{\rm imp}(t,\ell)$};

\node[outerstate, below=1.0cm of outertarget] (outerstate)
  {\textbf{slow outer state $\ell$}\\[-1mm]
   $\epsilon_2\dot\ell=(x-\ell)/c-\nu\ell$};
\node[innerstate, below=1.0cm of innertarget] (innerstate)
  {\textbf{fast primal--dual state $z=(x,p,q)$}\\[-1mm]
   $\Lc=\mathcal L_{c,\alpha}^{t,\ell}$\\[-1mm]
   $\epsilon_1\dot z=(-\nabla_x\Lc,\nabla_p\Lc,
      [\nabla_q\Lc]^+_q)$};

\coordinate (statemid) at ($(outerstate.south)!0.5!(innerstate.south)$);
\node[energy, below=1.3cm of statemid] (energy)
  {$\displaystyle
    \underbrace{\|\ell-\hat\ell(t)\|}_{\text{outer error}},
    \qquad
    \underbrace{\|z-z^{\rm imp}(t,\ell)\|}_{\text{inner error}}$};

\draw[dataarrow] (problem.south west) to[out=-125,in=90]
  (outertarget.north);
\draw[dataarrow] (problem.south east) to[out=-55,in=90]
  (innertarget.north);
\draw[outerarrow] (outertarget) --
  node[left, align=right, text=guideblue] {outer tracking\\scale $\epsilon_2$}
  (outerstate);
\draw[innerarrow] (innertarget) --
  node[right, align=left, text=guideorange] {inner tracking\\scale $\epsilon_1$}
  (innerstate);
\draw[dependency] (outerstate.east) to[out=20,in=200]
  node[above, sloped, text=guidegreen!55!black]
  {$\ell$ sets the proximal center} (innertarget.west);
\draw[feedback] (innerstate.south) -- ++(0,-0.55) -|
  node[pos=0.25, below, text=guideblue!80!black]
  {primal feedback $x$} (outerstate.south);

\begin{scope}[on background layer]
  \node[outerpanel, fit=(outertarget)(outerstate)] (outerpanel) {};
  \node[innerpanel, fit=(innertarget)(innerstate)] (innerpanel) {};
  \node[tubebox, fit=(problem)(outerpanel)(innerpanel)(energy)]
    (trackingtube) {};
\end{scope}
\node[font=\bfseries\footnotesize, text=guideblue, anchor=south west]
  at ([xshift=2mm,yshift=1mm]outerpanel.north west) {Outer loop};
\node[font=\bfseries\footnotesize, text=guideorange, anchor=south east]
  at ([xshift=-2mm,yshift=1mm]innerpanel.north east) {Inner loop};
\node[font=\scriptsize, text=black!65, anchor=south east]
  at ([xshift=-2mm,yshift=1mm]trackingtube.south east)
  {local tube for the tracking theorem};
\end{tikzpicture}
\caption{Two-time-scale feedback architecture in
Section~\ref{sec:two-time-scale-flow}.  For the current pair $(t,\ell)$,
the fast projected primal--dual dynamics drives $z$ toward the implicit
saddle $z^{\rm imp}(t,\ell)$.  Its primal component $x$ closes the slow
outer loop for $\ell$, whose moving target is
$\hat x_{\alpha,\nu}(t)$.  A common admissible radius for the two
displayed errors is constructed below.}
\label{fig:two-time-scale-architecture}
\end{figure}

We next construct uniform neighborhoods in which the two tracking errors
can be controlled. They are chosen to guarantee the following properties.
\begin{itemize}
\item The outer state $\ell$ tracks $\hat x_{\alpha,\nu}(t)$ and remains
inside the neighborhood where the outer envelope is strongly convex.
\item The inner state $z=(x,p,q)$ tracks $z^{\rm imp}(t,\ell)$ and
remains near the reference KKT curve, where the local Lagrangian is
strongly convex--strongly concave.
\end{itemize}
Let
$C_{\min}:=\{(\hat x(t),\hat p(t),\hat q(t)):t\in[0,t_f]\}$
be the reference KKT curve on $[0,t_f]$, and suppose that
Assumptions~\ref{assumption::seven} and
\ref{assumption::time-regularity} hold along it. Choose $c>0$ small enough that
$\mathcal{L}_{c,\alpha}^{t,\ell}$ is $\mu$-strongly convex with respect
to $x$, for some uniform $\mu>0$, on a neighborhood $N_1$ of
$C_{\min}$ and for the nearby values of $\ell$ used below. Without loss
of generality, take $r_1>0$ such that $N_1$ has the form
\[
N_1 = \bigcup_{t\in [0,t_f]} B((\hat x(t), \hat p(t), \hat q(t));r_1).
\]
By compactness and the uniform local Lipschitz continuity of the implicit
mapping, there exist
$r_2>0$ and $\overline{\alpha}>0$ such that
\[
N_2= \bigcup_{t\in [0 ,t_f ]}\{t\}\times B(\hat x(t);r_2) \times(-\overline\alpha,\overline\alpha)
\]
is mapped by $(\inv{x},\inv{p},\inv{q})$ into
$\cup_{t\in[0,t_f]}B((\hat x(t),\hat p(t),\hat q(t));r_1/2)$.
Reduce $\overline\alpha$ if necessary so that it also lies within the
uniform $\alpha$-ranges permitted by
Proposition~\ref{lemma::gradient_varphi} and
Remark~\ref{rem:nonconvex-regularization-bias}.
On this tube the implicit map obeys one joint estimate
\begin{equation}\label{uniform-implicit-lipschitz}
\norm{z^{\rm imp}(t_2,\ell_2,\alpha')
      -z^{\rm imp}(t_1,\ell_1,\alpha')}
\leq e_1|t_2-t_1|+e_2\norm{\ell_2-\ell_1}
\end{equation}
for finite constants $e_1,e_2$, uniformly for
$|\alpha'|<\overline\alpha$, and all pairs whose connecting parameter segment
remains in the tube.

 Once more, by compactness and
Proposition~\ref{lemma::gradient_varphi}, choose
$0<\alpha<\overline\alpha$ and $r_3>0$ such that
$\varphi_{c,\alpha}^t$ is defined and strongly convex in
$B(\hat x_{\alpha}(t);r_3)
\subseteq B(\hat x(t);r_2)$ for $t\in[0,t_f]$.
Using compactness of $[0,t_f]$ and
Remark~\ref{rem:nonconvex-regularization-bias}, choose an admissible
$\nu>0$ and $r_4>0$
such that
$B(\hat x_{\alpha,\nu}(t);r_4)
\subseteq B(\hat x_{\alpha}(t);r_3)$ for all $t\in[0,t_f]$.
Define the explicit tube margin
\begin{equation}\label{nonconvex-tube-radius}
 R=\min\{r_1/2,r_4\}.
\end{equation}
\begingroup
The role of $R$ can be stated directly in terms of the two geometric
errors. If $\norm{\ell(t)-\hat\ell(t)}<R\leq r_4$,
then $\ell$ remains in the outer strong-convexity neighborhood. Moreover,
$z^{\rm imp}(t,\ell)$ lies within $r_1/2$ of the reference KKT point
$\hat z(t)=(\hat x(t),\hat p(t),\hat q(t))$. Hence, if
$\norm{z(t)-z^{\rm imp}(t,\ell(t))}<R\leq r_1/2$,
the triangle inequality gives
$\norm{z-\hat z(t)}<r_1$, and therefore $z\in N_1$.
\endgroup
\FloatBarrier
Figure~\ref{illustration} illustrates the construction at a fixed time.
\begin{figure}[t]
\centering
\begin{tikzpicture}[
  font=\footnotesize,
  maparrow/.style={-{Latex[length=2.2mm]}, very thick,
    draw=guidegreen!80!black},
  errorarrow/.style={-{Latex[length=2mm]}, very thick,
    draw=red!75!black, draw opacity=0.55},
  errorlabel/.style={font=\scriptsize, text=red!75!black,
    text opacity=1, inner sep=1pt},
  radiusline/.style={densely dashed, thick},
  leader/.style={draw=black!55, thin},
  pointlabel/.style={text opacity=1, inner sep=1.4pt},
  mathlabel/.style={text opacity=1, inner sep=1pt},
  callout/.style={draw=black!50, rounded corners=2pt, fill=black!2,
    align=center, text width=11.2cm, inner sep=4pt}
]
\coordinate (xref) at (-3.65,0.25);
\coordinate (xalpha) at (-3.35,-0.15);
\coordinate (ellref) at (-2.95,-0.55);
\coordinate (ellstate) at (-2.45,-0.35);

\coordinate (zref) at (3.5,0);
\coordinate (zimp) at (3.75,0.12);
\coordinate (zstate) at (4.35,0.75);

\filldraw[fill=guideblue!3, draw=guideblue!45, line width=0.8pt]
  (xref) circle (2.08cm);
\filldraw[fill=guidegreen!9, draw=guidegreen!75, line width=0.9pt]
  (xalpha) circle (1.35cm);
\filldraw[fill=guideblue!15, draw=guideblue, line width=1pt]
  (ellref) circle (0.75cm);

\filldraw[fill=guideorange!5, draw=guideorange!65, line width=0.9pt]
  (zref) circle (2.08cm);
\filldraw[fill=guideorange!15, draw=guideorange, line width=1pt]
  (zref) circle (1.04cm);

\draw[maparrow] (ellstate) to[out=28,in=155]
  node[mathlabel, pos=0.52, above, align=center,
    text=guidegreen!55!black]
  {implicit solution map\\[-1mm]
   $(t,\ell,\alpha)\mapsto z^{\rm imp}(t,\ell)$}
  (zimp);

\draw[radiusline, draw=guideblue!60] (xref) -- ++(112:2.08cm)
  node[mathlabel, pos=0.62, left, text=guideblue!75!black] {$r_2$};
\draw[radiusline, draw=guidegreen!80!black] (xalpha) -- ++(-145:1.35cm)
  node[mathlabel, pos=0.64, below, yshift=-1.2mm,
    text=guidegreen!55!black] {$r_3$};
\draw[radiusline, draw=guideblue] (ellref) -- ++(-82:0.75cm)
  node[mathlabel, pos=0.58, right, text=guideblue] {$r_4$};

\draw[radiusline, draw=guideorange!75] (zref) -- ++(-35:2.08cm)
  node[mathlabel, pos=0.63, below, text=guideorange!80!black] {$r_1$};
\draw[radiusline, draw=guideorange] (zref) -- ++(215:1.04cm)
  node[mathlabel, pos=0.62, above, text=guideorange!80!black] {$r_1/2$};

\draw[errorarrow] (ellref) -- (ellstate)
  node[errorlabel, pos=0.5, above, yshift=2.5mm]
  {$\ell(t)-\hat\ell(t)$};
\draw[errorarrow] (zimp) -- (zstate)
  node[errorlabel, pos=0.52, below right, xshift=0.5mm]
  {$z(t)-z^{\rm imp}(t,\ell(t))$};

\fill[black!75] (xref) circle (1.7pt);
\draw[leader] (xref) -- (-4.30,0.82);
\node[pointlabel, anchor=east] at (-4.33,0.84) {$\hat x(t)$};
\fill[guidegreen!65!black] (xalpha) circle (1.8pt);
\draw[leader, draw=guidegreen!65!black] (xalpha) -- (-3.87,-0.29);
\node[pointlabel, anchor=east, text=guidegreen!55!black]
  at (-3.90,-0.29) {$\hat x_{\alpha}(t)$};
\fill[guideblue] (ellref) circle (1.9pt);
\node[pointlabel, anchor=south, text=guideblue]
  at ([yshift=-0.2mm]ellref) {$\hat x_{\alpha,\nu}(t)$};
\fill[black] (ellstate) circle (2.1pt);
\node[pointlabel, anchor=west]
  at ([xshift=2mm,yshift=-1mm]ellstate) {$\ell(t)$};

\fill[black!75] (zref) circle (1.7pt);
\draw[leader] (zref) -- (3.13,-0.88);
\node[pointlabel, anchor=north east] at (3.10,-0.91) {$\hat z(t)$};
\fill[guidegreen!65!black] (zimp) circle (2pt);
\draw[leader, draw=guidegreen!65!black] (zimp) -- (3.62,0.93);
\node[pointlabel, anchor=south, text=guidegreen!55!black]
  at (3.58,0.97) {$z^{\rm imp}(t,\ell)$};
\fill[black] (zstate) circle (2.1pt);
\node[pointlabel, anchor=south west]
  at ([xshift=1mm,yshift=1mm]zstate) {$z(t)$};

\node[font=\bfseries, text=guideblue] at (-3.65,2.62)
  {outer parameter space};
\node[font=\bfseries, text=guideorange!85!black] at (3.5,2.45)
  {primal--dual space};
\node[mathlabel, font=\scriptsize, align=center, text=black!65]
  at (-3.65,-2.12)
  {$B(\hat x(t);r_2)\times
    (-\overline\alpha,\overline\alpha)$};
\node[mathlabel, font=\scriptsize, align=center, text=black!65]
  at (3.5,-2.35)
  {$B(\hat z(t);r_1)\subset N_1$};
\node[callout] at (0,-3.02)
  {$\max\{\|\ell(t)-\hat\ell(t)\|,
             \|z(t)-z^{\rm imp}(t,\ell(t))\|\}<R:=\min\{r_1/2,r_4\}$
   $\Longrightarrow$
   $\ell(t)\in B(\hat x_{\alpha,\nu}(t);r_4)$ and
   $z(t)\in B(\hat z(t);r_1)\subset N_1$};
\end{tikzpicture}
\caption{Nested neighborhoods used in the local analysis at a fixed time
$t$.  The selected outer target
$\hat x_{\alpha,\nu}(t)$ lies in the nested
$r_4$-, $r_3$-, and $r_2$-neighborhoods.  The implicit map sends
$(t,\ell,\alpha)$ to $z^{\rm imp}(t,\ell)$ inside the
$r_1/2$-ball about $\hat z(t)$.  The red arrows show the outer error
$\ell(t)-\hat\ell(t)$ and the inner error
$z(t)-z^{\rm imp}(t,\ell(t))$.  The common margin $R$ keeps both the outer
and primal--dual states inside the neighborhoods where the local analysis
applies.}
\label{illustration}
\end{figure}

\begin{lemma}\label{lemma::ten} Under
Assumption~\ref{assumption::time-regularity}, the curve
$\hat x_{\alpha,\nu}(t)$ is Lipschitz in $t$. Consequently,
it is differentiable almost everywhere and its derivative is essentially
bounded on $[0,t_f]$.
\end{lemma}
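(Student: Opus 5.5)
The plan is to characterize $\hat x_{\alpha,\nu}(t)$ as the zero of a strongly monotone map and then transfer the Lipschitz dependence on $t$ of that map to the zero. Fix the admissible $\alpha,\nu$ and define, on the tube $\bigcup_t\{t\}\times B(\hat x_\alpha(t);r_3)$,
\[
 G(t,\ell)=\nabla\varphi^t_{c,\alpha,\nu}(\ell)=\frac{\ell-\inv{x}(t,\ell,\alpha)}{c}+\nu\ell ,
\]
using the gradient formula of Proposition~\ref{lemma::gradient_varphi}. By definition $G(t,\hat x_{\alpha,\nu}(t))=0$. By part~2 of that proposition, $\varphi^t_{c,\alpha}$ is $\sigma$-strongly convex on the common neighborhood, uniformly in $t$. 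Hence $G(t,\cdot)$ is $(\sigma+\nu)$-strongly monotone there:
\[
\langle G(t,\ell_1)-G(t,\ell_2),\ell_1-\ell_2\rangle\geq(\sigma+\nu)\norm{\ell_1-\ell_2}^2 .
\]
By \eqref{uniform-implicit-lipschitz}, $G$ is Lipschitz in $t$ with constant $e_1/c$, uniformly in $\ell$ on the tube.

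Next take $t_1,t_2\in[0,t_f]$ close enough that both $\hat x_{\alpha,\nu}(t_1)$ and $\hat x_{\alpha,\nu}(t_2)$ lie in a common ball on which the estimates at time $t_2$ apply. This requires $\hat x_{\alpha,\nu}$ to be continuous. Continuity should follow from the local uniqueness and the implicit-function construction of Remark~\ref{rem:nonconvex-regularization-bias}, together with continuity of the KKT curve in Assumption~\ref{assumption::time-regularity} and the inclusion $B(\hat x_{\alpha,\nu}(t);r_4)\subseteq B(\hat x_\alpha(t);r_3)$. With $\ell_i=\hat x_{\alpha,\nu}(t_i)$, the identities $G(t_1,\ell_1)=0=G(t_2,\ell_2)$ give
\[
(\sigma+\nu)\norm{\ell_1-\ell_2}^2\leq\langle G(t_2,\ell_1)-G(t_2,\ell_2),\ell_1-\ell_2\rangle
=\langle G(t_2,\ell_1)-G(t_1,\ell_1),\ell_1-\ell_2\rangle ,
\]
and therefore
\[
\norm{\ell_1-\ell_2}\leq\frac{e_1}{c(\sigma+\nu)}|t_1-t_2| .
\]
Covering $[0,t_f]$ by finitely many such short intervals, or equivalently chaining along a partition, upgrades this local bound to a global Lipschitz bound on $[0,t_f]$ with the same constant.

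The remaining claims are standard. A Lipschitz curve is absolutely continuous, so Rademacher's theorem gives differentiability almost everywhere. The derivative is bounded by the Lipschitz constant, so it is essentially bounded.

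The main obstacle is justifying the a priori continuity that places $\ell_1$ and $\ell_2$ in the same strong-monotonicity neighborhood, and checking that the segment joining $(t_1,\ell_1)$ to $(t_2,\ell_1)$ stays inside the tube where \eqref{uniform-implicit-lipschitz} holds. I would handle both by working inside the uniform radius $r_4$ and shrinking the interval length using uniform continuity of $\hat x$ and of $\hat x_\alpha-\hat x$ on the compact interval $[0,t_f]$.
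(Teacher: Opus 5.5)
Your argument is essentially the paper's proof. Both write $\nabla\varphi^t_{c,\alpha,\nu}$ as a strongly monotone map, extract the time-Lipschitz constant $e_1/c$ from \eqref{uniform-implicit-lipschitz}, compare the two roots by monotonicity and Cauchy--Schwarz, chain over a partition, and conclude with Rademacher. The differences are minor: you use the modulus $\sigma+\nu$ where the paper uses only $\nu$, which gives the sharper constant $e_1/(c(\sigma+\nu))\leq e_3$, and you explicitly flag the localization step that the paper compresses into ``for nearby times''.

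For that localization step, continuity of $\hat x_\alpha$ in $t$ is not established anywhere in the paper, so you should not rely on it. Instead, combine three facts: the $t$-uniform neighborhood of $\hat x(t)$ from Proposition~\ref{lemma::gradient_varphi}, the uniform bias bound \eqref{nonconvex-total-bias}, and uniform continuity of $\hat x$. Together these place $\hat x_{\alpha,\nu}(t_1)$ and $\hat x_{\alpha,\nu}(t_2)$ in a common strong-monotonicity ball, and keep the fixed-$\ell$ time segment inside the tube, once $|t_1-t_2|$ is small.
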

\begin{proof}
Write
\[
F_t(\ell)=\nabla\varphi_{c,\alpha,\nu}^t(\ell)
=\frac{\ell-\inv{x}(t,\ell,\alpha)}c
+\nu\ell.
\]
On the uniform tube, $F_t$ is at least $\nu$-strongly monotone in
$\ell$, and \eqref{uniform-implicit-lipschitz} gives a constant $L_T$
such that
\[
\norm{F_{t_2}(\ell)-F_{t_1}(\ell)}\leq L_T|t_2-t_1|.
\]
Let $\ell_i=\hat x_{\alpha,\nu}(t_i)$, so
$F_{t_1}(\ell_1)=0$ and $F_{t_2}(\ell_2)=0$.  Notice that
$F_{t_1}(\ell_2)$ need not vanish because the mapping itself depends on
time.  Therefore,
\[
F_{t_1}(\ell_2)-F_{t_1}(\ell_1)
=F_{t_1}(\ell_2)-F_{t_2}(\ell_2).
\]
Strong monotonicity and Cauchy--Schwarz now give
\begin{align*}
\nu\norm{\ell_2-\ell_1}^2
&\leq\langle F_{t_1}(\ell_2)-F_{t_1}(\ell_1),
                 \ell_2-\ell_1\rangle\\
&=\langle F_{t_1}(\ell_2)-F_{t_2}(\ell_2),
                 \ell_2-\ell_1\rangle\\
&\leq L_T|t_2-t_1|\norm{\ell_2-\ell_1}.
\end{align*}
Thus $\norm{\ell_2-\ell_1}\leq(L_T/\nu)|t_2-t_1|$ for nearby
times.  A finite partition of $[t_1,t_2]$, using the uniform tube and
compactness, gives the same estimate for arbitrary $t_1,t_2\in[0,t_f]$.
Rademacher's theorem gives differentiability almost everywhere and the
same constant is an essential bound for the derivative.
\end{proof}

\begin{theorem}[Local tracking in the nonconvex case]
\label{proposition::lyapunov}
Let $[0,t_f]$ be an interval on which
Assumptions~\ref{assumption::seven} and
\ref{assumption::time-regularity} hold, and carry out the neighborhood
construction above with the selected
\greenfix{$c>0$, $\alpha>0$, and $\nu>0$}. Write
\[
z=(x,p,q),\qquad
z^{\rm imp}(t,\ell)
=\bigl(\inv{x}(t,\ell,\alpha),\inv{p}(t,\ell,\alpha),
        \inv{q}(t,\ell,\alpha)\bigr),
\]
and let $\hat\ell(t)=\hat x_{\alpha,\nu}(t)$. On the resulting uniform
tube, let $e_1,e_2$ be any finite uniform constants satisfying
\eqref{uniform-implicit-lipschitz}, and make the conservative choices
\begin{equation}\label{explicit-nonconvex-e-constants}
 e_3:=\frac{e_1}{c\nu},
 \qquad
 e_4:=\frac{1+e_2}{c}+\nu.
\end{equation}
As verified below, these constants satisfy
\begin{align*}
\norm{\dot{\hat\ell}(t)}&\leq e_3
 &&\text{for almost every }t,\\
\norm{\nabla\varphi_{c,\alpha,\nu}^t(\ell)}
&\leq e_4\norm{\ell-\hat\ell(t)}
 &&\text{throughout the tube}.
\end{align*}
Let $m=\min\{\mu,\alpha^2\}$. For arbitrary $\beta_1,\beta_2>0$,
define
\begin{align*}
a_1&=-\frac{\nu}{\epsilon_2}+\frac{\beta_1}{2}
       +\frac{\beta_2e_4^2}{2},\\
a_2&=-\frac{m}{\epsilon_1}
 +\frac{1}{2\beta_1c^2\epsilon_2^2}
 +\frac{e_2^2}{2\beta_2\epsilon_2^2}
 +\frac{e_2}{c\epsilon_2}.
\end{align*}
Choose $\epsilon_2>0$ sufficiently small that $a_1<0$, and then choose
$\epsilon_1>0$ sufficiently small that $a_2<0$. Define
\begin{equation}\label{nonconvex-AB-constants}
A:=-\max\{a_1,a_2\}>0,
\qquad
B:=\sqrt{e_1^2+e_3^2}.
\end{equation}

For $q(0)\geq0$, define
\[
V(t)=\norm{\ell(t)-\hat\ell(t)}^2
+\norm{z(t)-z^{\rm imp}(t,\ell(t))}^2.
\]
Assume $\sqrt{V(0)}<R$, where $R$ is defined in
\eqref{nonconvex-tube-radius}.  As long as the trajectory remains in the
constructed tube,
\begin{equation}\label{nonconvex-dissipation-corrected}
\frac12\dot V(t)\leq-AV(t)+B\sqrt{V(t)}
\quad\text{for almost every }t,
\end{equation}
and hence
\begin{equation}\label{nonconvex-tracking-bound-corrected}
\sqrt{V(t)}\leq e^{-At}\sqrt{V(0)}
+\frac BA(1-e^{-At}).
\end{equation}
Since $\norm{\ell(t)-\hat x_{\alpha,\nu}(t)}\leq\sqrt{V(t)}$, the
outer state obeys the explicit tracking bound
\begin{align}
\norm{\ell(t)-\hat x_{\alpha,\nu}(t)}
&\leq e^{-At}
\Bigl(\norm{\ell(0)-\hat x_{\alpha,\nu}(0)}^2 \notag\\
&\qquad\quad
+\norm{z(0)-z^{\rm imp}(0,\ell(0))}^2\Bigr)^{1/2}
+\frac BA(1-e^{-At}).
\label{nonconvex-outer-tracking-bound}
\end{align}
In particular, the explicit condition
\begin{equation}\label{nonconvex-invariance-condition}
 \max\left\{\sqrt{V(0)},\frac BA\right\}<R
\end{equation}
ensures that the trajectory remains in the tube, so the estimate is valid
on all of $[0,t_f]$.
\end{theorem}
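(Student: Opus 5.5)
The argument is a composite Lyapunov analysis of $V=V_o+V_i$, with outer part $V_o=\norm{\ell-\hat\ell(t)}^2$ and inner part $V_i=\norm{z-z^{\rm imp}(t,\ell)}^2$, run while the state stays in the tube. A continuation argument then shows that, under \eqref{nonconvex-invariance-condition}, it never leaves.

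\textbf{Step 1: the constants $e_3,e_4$.} The bound $\norm{\dot{\hat\ell}}\leq e_3$ follows from the proof of Lemma~\ref{lemma::ten}. There $L_T$ may be taken as $e_1/c$, by \eqref{uniform-implicit-lipschitz} applied to $F_t(\ell)=(\ell-\inv{x}(t,\ell,\alpha))/c+\nu\ell$, which yields Lipschitz constant $e_1/(c\nu)$. For $e_4$ we use $\nabla\varphi^t_{c,\alpha,\nu}(\hat\ell)=0$ and write
\[
\nabla\varphi^t_{c,\alpha,\nu}(\ell)=\frac{(\ell-\hat\ell)-(\inv{x}(t,\ell)-\inv{x}(t,\hat\ell))}{c}+\nu(\ell-\hat\ell).
\]
Together with \eqref{uniform-implicit-lipschitz} in $\ell$, this gives the factor $(1+e_2)/c+\nu$.

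\textbf{Step 2: outer derivative.} Rewrite the $\ell$-equation of \eqref{dynamics} as
\[
\epsilon_2\dot\ell=-\nabla\varphi^t_{c,\alpha,\nu}(\ell)+\frac{x-\inv{x}(t,\ell)}{c}.
\]
Then
\[
\tfrac12\dot V_o=-\frac1{\epsilon_2}\langle\ell-\hat\ell,\nabla\varphi^t_{c,\alpha,\nu}(\ell)\rangle+\frac1{c\epsilon_2}\langle\ell-\hat\ell,x-\inv{x}\rangle-\langle\ell-\hat\ell,\dot{\hat\ell}\rangle.
\]
Strong monotonicity with modulus at least $\nu$ (Proposition~\ref{lemma::gradient_varphi} plus the $\nu\ell$ term) makes the first term at most $-(\nu/\epsilon_2)V_o$. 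Young's inequality with weight $\beta_1$ bounds the second term by $\frac{\beta_1}{2}V_o+\frac{1}{2\beta_1c^2\epsilon_2^2}V_i$. The third term is at most $e_3\sqrt{V_o}$.

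\textbf{Step 3: inner derivative.} We have
\[
\tfrac12\dot V_i=\langle z-z^{\rm imp},\dot z\rangle-\langle z-z^{\rm imp},\tfrac{d}{dt}z^{\rm imp}(t,\ell(t))\rangle.
\]
For the first term, at frozen $(t,\ell)$ the map $z^{\rm imp}$ is the saddle point of $\mathcal L^{t,\ell}_{c,\alpha}$. On $N_1$ this Lagrangian is $\mu$-strongly convex in $x$ and $\alpha^2$-strongly concave in $(p,q)$. Hence the computation in the proof of Theorem~\ref{th:tracking}, including the projection inequality, gives $\langle z-z^{\rm imp},\dot z\rangle\leq-(m/\epsilon_1)V_i$.

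This requires the saddle inequalities to hold on the local ball. I would justify it through the local strong convexity and concavity on $N_1$, which is convex ball-wise.

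For the second term, the chain rule in the Lipschitz (Carathéodory) sense and \eqref{uniform-implicit-lipschitz} give
\[
\norm{\tfrac{d}{dt}z^{\rm imp}}\leq e_1+e_2\norm{\dot\ell}.
\]
Bound $\norm{\dot\ell}$ via Step 2's rewriting:
\[
\norm{\dot\ell}\leq\frac{e_4\sqrt{V_o}}{\epsilon_2}+\frac{\sqrt{V_i}}{c\epsilon_2}.
\]
This produces three terms:
\begin{itemize}
\item $e_1\sqrt{V_i}$;
\item $\frac{e_2}{c\epsilon_2}V_i$;
\item $\frac{e_2e_4}{\epsilon_2}\sqrt{V_oV_i}$, which Young's inequality with $\beta_2$ splits into $\frac{\beta_2e_4^2}{2}V_o+\frac{e_2^2}{2\beta_2\epsilon_2^2}V_i$.
\end{itemize}
Matching the normalization in $a_1,a_2$ may require distributing the $1/\epsilon_2$ factors slightly differently; I would adjust the Young weights so that the coefficients come out exactly as $a_1$ and $a_2$.

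\textbf{Step 4: assemble and close.} Summing gives
\[
\tfrac12\dot V\leq a_1V_o+a_2V_i+e_3\sqrt{V_o}+e_1\sqrt{V_i}\leq-AV+B\sqrt V,
\]
using Cauchy--Schwarz on $(e_3,e_1)\cdot(\sqrt{V_o},\sqrt{V_i})$. The scalar comparison argument of Theorem~\ref{th:tracking} (Dini derivative at $V=0$) then yields \eqref{nonconvex-tracking-bound-corrected}, and \eqref{nonconvex-outer-tracking-bound} follows immediately.

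For invariance, the right side of \eqref{nonconvex-tracking-bound-corrected} is a convex combination of $\sqrt{V(0)}$ and $B/A$, so it stays at most $\max\{\sqrt{V(0)},B/A\}<R$. Let $T^*$ be the first exit time. By continuity $\sqrt{V(T^*)}=R$, contradicting the strict bound. By the discussion after \eqref{nonconvex-tube-radius}, the state therefore stays in the tube on all of $[0,t_f]$.

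\textbf{Main obstacle.} I expect the hardest part to be Step 3's inner derivative. It needs a rigorous chain rule for the merely Lipschitz map $t\mapsto z^{\rm imp}(t,\ell(t))$. It also needs to apply the global saddle-point inequalities of Theorem~\ref{th:tracking} when strong convexity and concavity hold only on the tube.
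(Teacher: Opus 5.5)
Your proposal is correct and follows the paper's proof essentially step for step. It uses the same derivation of $e_3=e_1/(c\nu)$ and $e_4$ from Lemma~\ref{lemma::ten} and \eqref{uniform-implicit-lipschitz}, and the same rewriting $\epsilon_2\dot\ell=-\nabla\varphi^t_{c,\alpha,\nu}(\ell)+(x-\inv{x})/c$. It likewise uses the same metric-derivative bound $e_1+e_2\norm{\dot\ell}$ for $z^{\rm imp}(t,\ell(t))$, the same two Young splits, and the same Cauchy--Schwarz assembly, comparison, and first-exit continuation. Your Young splits already give exactly $a_1$ and $a_2$, so no reweighting is needed.

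For the local saddle step you flagged, your ``convex ball-wise'' remark is best made precise through strong monotonicity. Along the segment $[z,z^{\rm imp}]$ inside the ball about $\hat z(t)$, one has $\nabla_{xx}\mathcal L^{t,\ell}_{c,\alpha}\succeq\mu I$ and $\nabla^2_{(p,q)}\mathcal L^{t,\ell}_{c,\alpha}=-\alpha^2 I$. Hence the saddle operator is strongly monotone there. Combined with the normal-cone condition at $z^{\rm imp}$ and the projection inequality, this gives $\langle z-z^{\rm imp},\dot z\rangle\leq-(m/\epsilon_1)V_i$. This route does not require the mixed points $(\inv{x},p,q)$ and $(x,\inv{p},\inv{q})$ used in Theorem~\ref{th:tracking} to lie in $N_1$.
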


\begin{proof}
The projected vector field is locally Lipschitz on the tube in the
projected-dynamical-system sense, so the same local Carath\'eodory
existence and uniqueness argument as in Theorem~\ref{th:tracking} applies.
We first justify the concrete constants in
\eqref{explicit-nonconvex-e-constants}.  At a fixed $\ell$ and for
admissible nearby times, the temporal part of
\eqref{uniform-implicit-lipschitz} gives
\[
\norm{\inv{x}(t_2,\ell,\alpha)-\inv{x}(t_1,\ell,\alpha)}
\leq e_1|t_2-t_1|.
\]
Hence the constant $L_T$ in Lemma~\ref{lemma::ten} can be chosen as
$e_1/c$, and that lemma yields
$\norm{\dot{\hat\ell}(t)}\leq e_1/(c\nu)=e_3$ almost everywhere.
Moreover, the segment joining $\ell$ and $\hat\ell$ remains in the
localized ball. Since
$\nabla\varphi_{c,\alpha,\nu}^t(\hat\ell)=0$, the spatial part of
\eqref{uniform-implicit-lipschitz} therefore gives
\begin{align}
\norm{\nabla\varphi_{c,\alpha,\nu}^t(\ell)}
&=\left\|\frac{(\ell-\hat\ell)
 -\bigl(\inv{x}(t,\ell,\alpha)
          -\inv{x}(t,\hat\ell,\alpha)\bigr)}c
 +\nu(\ell-\hat\ell)\right\|\notag\\
&\leq\left(\frac{1+e_2}{c}+\nu\right)
 \norm{\ell-\hat\ell}
=e_4\norm{\ell-\hat\ell}.
\label{nonconvex-envelope-gradient-bound}
\end{align}
\begingroup
For compactness in the following calculation, write
$x^{\rm imp}:=\inv{x}(t,\ell,\alpha)$.
Since $\hat\ell(t)$ minimizes $\varphi_{c,\alpha,\nu}^t$,
$\nabla\varphi_{c,\alpha,\nu}^t(\hat\ell)=0$.  Strong convexity, with
modulus at least $\nu$, therefore gives
\[
\left\langle \ell-\hat\ell,
 \nabla\varphi_{c,\alpha,\nu}^t(\ell)
 -\nabla\varphi_{c,\alpha,\nu}^t(\hat\ell)\right\rangle
\geq \nu\norm{\ell-\hat\ell}^2.
\]
Recall that the slow dynamics gives
$\dot\ell=(x-\ell)/(c\epsilon_2)-(\nu/\epsilon_2)\ell$.
Moreover, almost everywhere,
$\dot{\hat\ell}(t)=\frac{d}{dt}\hat x_{\alpha,\nu}(t)$ and
$\norm{\dot{\hat\ell}(t)}\leq e_3$.
Consequently, using
$\norm{x-x^{\rm imp}}
\leq\norm{z-z^{\rm imp}(t,\ell)}$, we obtain
\begin{alignat}{3}
\textup{(i)}\quad
\frac12\frac d{dt}\norm{\ell-\hat\ell}^2
&=\langle \ell-\hat\ell,\dot\ell\rangle
&\,&{}
&\,&{}-\langle \ell-\hat\ell,\dot{\hat\ell}\rangle\notag\\
\textup{(ii)}\quad
&=\frac1{c\epsilon_2}\langle \ell-\hat\ell,x-\ell\rangle
&\,&{}-\frac\nu{\epsilon_2}\langle \ell-\hat\ell,\ell\rangle
&\,&{}-\langle \ell-\hat\ell,\dot{\hat\ell}\rangle\notag\\
\textup{(iii)}\quad
&=\frac1{c\epsilon_2}\langle \ell-\hat\ell,x-x^{\rm imp}\rangle
&\,&{}-\frac1{\epsilon_2}
 \left\langle \ell-\hat\ell,\frac{\ell-x^{\rm imp}}c+\nu\ell\right\rangle
&\,&{}-\langle \ell-\hat\ell,\dot{\hat\ell}\rangle\notag\\
\textup{(iv)}\quad
&=\frac1{c\epsilon_2}\langle \ell-\hat\ell,x-x^{\rm imp}\rangle
&\,&{}-\frac1{\epsilon_2}
 \left\langle \ell-\hat\ell,\nabla\varphi_{c,\alpha,\nu}^t(\ell)\right\rangle
&\,&{}-\langle \ell-\hat\ell,\dot{\hat\ell}\rangle\notag\\
\textup{(v)}\quad
&\leq\frac{\norm{\ell-\hat\ell}\norm{z-z^{\rm imp}(t,\ell)}}
 {c\epsilon_2}
&\,&{}-\frac\nu{\epsilon_2}\norm{\ell-\hat\ell}^2
&\,&{}+e_3\norm{\ell-\hat\ell}\notag\\
\textup{(vi)}\quad
&\leq\frac{\norm{z-z^{\rm imp}(t,\ell)}^2}
 {2\beta_1c^2\epsilon_2^2}
&\,&{}+\left(-\frac\nu{\epsilon_2}+\frac{\beta_1}{2}\right)
 \norm{\ell-\hat\ell}^2
&\,&{}+e_3\norm{\ell-\hat\ell}.
\label{outer-error-derivative}
\end{alignat}
Steps~\textup{(i)}--\textup{(v)} use, respectively, the chain rule, the
slow dynamics, addition and subtraction of $x^{\rm imp}$, the
envelope-gradient identity, and strong convexity with Cauchy--Schwarz;
their three contributions remain column-aligned. In \textup{(vi)},
Young's inequality splits the first-column product, and its
$(\beta_1/2)\norm{\ell-\hat\ell}^2$ contribution combines with the
second-column contraction term.
\par
\endgroup
The composition $t\mapsto z^{\rm imp}(t,\ell(t))$ is absolutely
continuous.  The metric estimate
\eqref{uniform-implicit-lipschitz}, rather than a classical chain rule on
the possible nondifferentiability set, gives
\[
\left|\frac d{dt}z^{\rm imp}(t,\ell(t))\right|
\leq e_1+e_2\norm{\dot\ell(t)}
\quad\text{for almost every }t.
\]
The inner Lagrangian is $\mu$-strongly convex in $x$ and
$\alpha^2$-strongly concave in $(p,q)$. Applying the same
add-and-subtract saddle-gap argument as in the proof of
Theorem~\ref{th:tracking}, together with the metric-derivative bound,
yields
\begin{align*}
\frac12\frac d{dt}\norm{z-z^{\rm imp}(t,\ell)}^2
&\leq-\frac m{\epsilon_1}\norm{z-z^{\rm imp}(t,\ell)}^2
 +e_1\norm{z-z^{\rm imp}(t,\ell)}\\*
&\quad+e_2\norm{z-z^{\rm imp}(t,\ell)}\norm{\dot\ell}.
\end{align*}
Furthermore, adding and subtracting
$\inv{x}(t,\ell,\alpha)$ in the slow update gives, for almost every $t$,
\begin{align*}
\dot\ell
&=\frac{x-\ell}{c\epsilon_2}-\frac{\nu}{\epsilon_2}\ell\\
&=\frac{x-\inv{x}(t,\ell,\alpha)}{c\epsilon_2}
 -\frac1{\epsilon_2}
 \left(\frac{\ell-\inv{x}(t,\ell,\alpha)}c+\nu\ell\right)\\
&=\frac{x-\inv{x}(t,\ell,\alpha)}{c\epsilon_2}
 -\frac1{\epsilon_2}\nabla\varphi_{c,\alpha,\nu}^t(\ell).
\end{align*}
The triangle inequality, the gradient bound
\eqref{nonconvex-envelope-gradient-bound}, and the fact that the
primal-component error is bounded by the full primal--dual error now give
\begin{align*}
\norm{\dot\ell}
&\leq\frac1{\epsilon_2}
 \norm{\nabla\varphi_{c,\alpha,\nu}^t(\ell)}
 +\frac1{c\epsilon_2}
 \norm{x-\inv{x}(t,\ell,\alpha)}\\
&\leq\frac{e_4}{\epsilon_2}\norm{\ell-\hat\ell}
 +\frac1{c\epsilon_2}\norm{z-z^{\rm imp}(t,\ell)}.
\end{align*}
Applying Young's inequality to the resulting product of the two error
norms gives
\begin{align}\label{inner-error-derivative}
\frac12\frac d{dt}\norm{z-z^{\rm imp}(t,\ell)}^2
&\leq\frac{\beta_2e_4^2}{2}\norm{\ell-\hat\ell}^2 \notag\\
&\quad+\left(-\frac m{\epsilon_1}
+\frac{e_2^2}{2\beta_2\epsilon_2^2}
+\frac{e_2}{c\epsilon_2}\right)
 \norm{z-z^{\rm imp}(t,\ell)}^2 \notag\\
&\quad+e_1\norm{z-z^{\rm imp}(t,\ell)}.
\end{align}
Adding \eqref{outer-error-derivative} and
\eqref{inner-error-derivative}, and using
\begin{align*}
e_3\norm{\ell-\hat\ell}
+e_1\norm{z-z^{\rm imp}(t,\ell)}
&\leq B\sqrt{\norm{\ell-\hat\ell}^2
 +\norm{z-z^{\rm imp}(t,\ell)}^2}\\
&=B\sqrt V,
\end{align*}
proves
\eqref{nonconvex-dissipation-corrected}. The comparison argument for
$\sqrt V$ proves \eqref{nonconvex-tracking-bound-corrected}.  The
outer-state estimate \eqref{nonconvex-outer-tracking-bound} follows from
$\norm{\ell-\hat\ell}\leq\sqrt V$ and the definition of $V(0)$. The
asserted choice of time scales is possible because $a_1\to-\infty$
as $\epsilon_2\downarrow0$, after which $a_2\to-\infty$ as
$\epsilon_1\downarrow0$.  Finally, under
\eqref{nonconvex-invariance-condition}, the right-hand side of
\eqref{nonconvex-tracking-bound-corrected} is a convex combination of two
numbers strictly smaller than $R$.  A first-exit argument therefore rules
out departure from the tube and extends the local solution and estimate
through $[0,t_f]$.
\end{proof}

\begingroup
\begin{corollary}[Actual primal tracking error]
\label{cor:nonconvex-actual-primal-error}
Under the hypotheses of Theorem~\ref{proposition::lyapunov}, let
\[
 M_\ell:=\sup_{t\in[0,t_f]}\norm{\hat x_{\alpha,\nu}(t)}<\infty.
\]
Then the primal component of the two-time-scale flow satisfies
\begin{align}
 \norm{x(t)-\hat x(t)}
 &\leq (1+e_2)\left(
 e^{-At}\sqrt{V(0)}+\frac BA(1-e^{-At})\right) \notag\\
 &\qquad{}+C_\alpha\alpha^2+(C_\nu+cM_\ell)\nu,
 \label{nonconvex-actual-primal-bound}
\end{align}
where $C_\alpha$ and $C_\nu$ are defined in
Remark~\ref{rem:nonconvex-regularization-bias}. Thus the actual primal
error is the sum of a decaying transient, a moving-target tracking floor,
and the regularization bias.
\end{corollary}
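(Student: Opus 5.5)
The plan is to use a triangle-inequality decomposition that passes through the implicit primal solution and the two regularized reference curves:
\[
x(t)-\hat x(t)=\bigl(x-\inv{x}(t,\ell,\alpha)\bigr)+\bigl(\inv{x}(t,\ell,\alpha)-\inv{x}(t,\hat\ell,\alpha)\bigr)+\bigl(\inv{x}(t,\hat\ell,\alpha)-\hat x_{\alpha,\nu}(t)\bigr)+\bigl(\hat x_{\alpha,\nu}(t)-\hat x(t)\bigr),
\]
with every term evaluated at the trajectory values $\ell=\ell(t)$ and $\hat\ell=\hat\ell(t)=\hat x_{\alpha,\nu}(t)$.

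\textbf{Inner error and outer error.} For the first term, $\norm{x-\inv{x}}\leq\norm{z-z^{\rm imp}(t,\ell)}$ because the primal component is part of the full primal--dual vector. For the second term, the spatial part of \eqref{uniform-implicit-lipschitz} applies, since the segment joining $\ell$ and $\hat\ell$ lies in the tube. This gives the bound $e_2\norm{\ell-\hat\ell}$. Both $\norm{z-z^{\rm imp}}$ and $\norm{\ell-\hat\ell}$ are bounded by $\sqrt{V(t)}$. Hence the first two terms together contribute at most $(1+e_2)\sqrt{V(t)}$. By \eqref{nonconvex-tracking-bound-corrected} this is bounded by $(1+e_2)\bigl(e^{-At}\sqrt{V(0)}+\tfrac BA(1-e^{-At})\bigr)$. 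This estimate is valid on all of $[0,t_f]$ because the invariance condition in Theorem~\ref{proposition::lyapunov} keeps the trajectory in the tube.

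\textbf{Fixed-point defect and regularization bias.} The third term measures how far $\hat\ell$ is from being a fixed point of $\inv{x}$. The point $\hat\ell$ is the critical point of $\varphi^t_{c,\alpha,\nu}$, so Proposition~\ref{lemma::gradient_varphi} gives $(\hat\ell-\inv{x}(t,\hat\ell,\alpha))/c+\nu\hat\ell=0$. Therefore $\inv{x}(t,\hat\ell,\alpha)-\hat\ell=c\nu\hat\ell$, and this term has norm at most $c\nu M_\ell$. Here $M_\ell<\infty$ because $\hat x_{\alpha,\nu}$ is continuous (in fact Lipschitz, by Lemma~\ref{lemma::ten}) on the compact interval. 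The fourth term is bounded by $C_\alpha\alpha^2+C_\nu\nu$ through \eqref{nonconvex-total-bias} in Remark~\ref{rem:nonconvex-regularization-bias}. Summing the four estimates gives \eqref{nonconvex-actual-primal-bound}.

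\textbf{Main obstacle.} The only delicate point is justifying the use of the spatial Lipschitz estimate for the second term. This requires that both $\ell(t)$ and $\hat\ell(t)$ lie in $B(\hat x_{\alpha,\nu}(t);r_4)\subseteq B(\hat x(t);r_2)$, so that the connecting segment stays inside the tube where \eqref{uniform-implicit-lipschitz} holds. I would deduce this from $\sqrt{V(t)}<R\leq r_4$, which holds on $[0,t_f]$ by the first-exit argument concluding the proof of Theorem~\ref{proposition::lyapunov}. Convexity of the ball then guarantees that the segment remains inside it.
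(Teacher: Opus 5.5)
Your proposal is correct and follows the paper's proof essentially step for step: the same four-term triangle inequality through $\inv{x}(t,\ell,\alpha)$, $\inv{x}(t,\hat\ell,\alpha)$ and $\hat\ell$, with \eqref{uniform-implicit-lipschitz} giving $(1+e_2)\sqrt V$, the outer critical-point identity $\inv{x}(t,\hat\ell,\alpha)=(1+c\nu)\hat\ell$, and the bias bound \eqref{nonconvex-total-bias}. The paper leaves two details implicit that you supply: the check that the segment $[\ell(t),\hat\ell(t)]$ stays in $B(\hat x(t);r_2)$, and the observation that validity on all of $[0,t_f]$ rests on the invariance condition \eqref{nonconvex-invariance-condition}.
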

\begin{proof}
Write $x^{\rm imp}(t,\ell)$ for the primal component of
$z^{\rm imp}(t,\ell)$. The triangle inequality and
\eqref{uniform-implicit-lipschitz} give
\begin{align*}
 \norm{x-\hat x}
 &\leq \norm{x-x^{\rm imp}(t,\ell)}
 +\norm{x^{\rm imp}(t,\ell)-x^{\rm imp}(t,\hat\ell)}\\
 &\qquad{}+\norm{x^{\rm imp}(t,\hat\ell)-\hat\ell}
 +\norm{\hat\ell-\hat x}\\
 &\leq (1+e_2)\sqrt V
 +c\nu\norm{\hat\ell}
 +C_\alpha\alpha^2+C_\nu\nu.
\end{align*}
Indeed, the outer critical-point equation implies
$x^{\rm imp}(t,\hat\ell)=(1+c\nu)\hat\ell$.
The claim now follows from
\eqref{nonconvex-tracking-bound-corrected} and the definition of $M_\ell$.
\end{proof}
\endgroup

\begin{remark}[Interpretation]
The two terms in $V$ separate outer error
$\ell-\hat x_{\alpha,\nu}$ from inner error
$z-z^{\rm imp}(t,\ell)$. The rate $A$ is set by curvature and the chosen
time scales; $B$ measures motion of the target and implicit solution map.
Unlike Theorem~\ref{th:tracking}, this guarantee is local; condition
\eqref{nonconvex-invariance-condition} is an explicit sufficient test for
remaining inside the constructed tube.
\end{remark}

\begin{remark}[Time-scale separation versus target motion]
The ratio
\[
\rho=\frac{\epsilon_1}{\epsilon_2}
\]
controls the separation between the fast primal--dual state $z$ and the
slow outer state $\ell$, but it is not the only relevant quantity for a
time-dependent problem. Indeed, under the rescaled time
$\tau=t/\epsilon_2$, the dynamics of a frozen problem depend on
$(\epsilon_1,\epsilon_2)$ only through $\rho$; a common rescaling of both
parameters merely changes the clock. For moving data, however, the problem
is evaluated at $t=\epsilon_2\tau$, so the absolute value of $\epsilon_2$
also determines the speed of the algorithm relative to the moving target.

\end{remark}

\vspace*{0.25cm}

The next result records the effect of bounded additive disturbances.
\begin{proposition}[Robustness in the nonconvex case]
\label{prop:nonconvex-robustness}
Replace \eqref{dynamics} by
\begin{align}
  \begin{split}\label{dynamics_error}
    \epsilon_1 \dot{x} &= -\displaystyle\frac{\partial
      \mathcal{L}^{t,\ell}_{c,\alpha}}{\partial x} + e_x(t),
    \\
    \epsilon_1 \dot{p} &= \displaystyle\frac{\partial
      \mathcal{L}^{t,\ell}_{c,\alpha}}{\partial p}+ e_p(t),
    \\
    \epsilon_1 \dot{q} &= [\displaystyle\frac{\partial
      \mathcal{L}^{t,\ell}_{c,\alpha}}{\partial q} + e_q(t)]^+_q,
    \\
    \epsilon_2\dot{\ell} &= \displaystyle\frac{x - \ell}{c} -\nu \ell+ e_\ell(t),
  \end{split}
\end{align}
where the errors are measurable and, almost everywhere,
\[
\norm{(e_x,e_p,e_q)}\leq\delta_1,
\qquad \norm{e_\ell}\leq\delta_2.
\]
Under the hypotheses and notation of Theorem~\ref{proposition::lyapunov},
let
\[
E=\sqrt{
\left(\frac{\delta_2}{\epsilon_2}\right)^2+
\left(\frac{\delta_1}{\epsilon_1}
+\frac{e_2\delta_2}{\epsilon_2}\right)^2}.
\]
As long as the trajectory remains in the constructed tube,
\[
\frac12\dot V\leq-AV+(B+E)\sqrt V,
\]
and therefore
\[
\sqrt{V(t)}\leq e^{-At}\sqrt{V(0)}
+\frac{B+E}{A}(1-e^{-At}).
\]
If
\begin{equation}\label{nonconvex-robust-invariance-condition}
 \max\left\{\sqrt{V(0)},\frac{B+E}{A}\right\}<R,
\end{equation}
then the disturbed trajectory remains in the tube and the estimate holds
on all of $[0,t_f]$.
\end{proposition}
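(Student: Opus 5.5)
The plan is to repeat the Lyapunov computation in the proof of Theorem~\ref{proposition::lyapunov} for the disturbed system \eqref{dynamics_error}, with the same function $V$. We keep track of every place where the disturbances enter and bound each new term linearly in $\sqrt V$. All structural ingredients carry over unchanged: the constants $e_1,\dots,e_4$, the strong monotonicity of $\nabla\varphi^t_{c,\alpha,\nu}$, the saddle-gap argument for the inner Lagrangian, and the metric-derivative bound for $t\mapsto z^{\rm imp}(t,\ell(t))$. These depend only on the target curves and on the tube, not on the vector field being integrated. Existence of local Carath\'eodory solutions follows as before, because the errors are measurable and essentially bounded.

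\emph{Outer error.} The slow equation now reads $\dot\ell=(x-\ell)/(c\epsilon_2)-(\nu/\epsilon_2)\ell+e_\ell/\epsilon_2$. Steps (i)--(vi) of \eqref{outer-error-derivative} therefore pick up one extra term, $\langle\ell-\hat\ell,e_\ell\rangle/\epsilon_2\le(\delta_2/\epsilon_2)\norm{\ell-\hat\ell}$.

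\emph{Inner error.} There are two new contributions.
\begin{itemize}
\item In the saddle-gap inequality, the projection inequality of Proposition~\ref{robustness} still holds for the perturbed dual field. Since the $\epsilon_1$-scaled flow carries the error $(e_x,e_p,e_q)/\epsilon_1$, the inner inequality gains $(\delta_1/\epsilon_1)\norm{z-z^{\rm imp}}$.
\item The bound on $\norm{\dot\ell}$ gains $\delta_2/\epsilon_2$. Through the metric-derivative term $e_2\norm{z-z^{\rm imp}}\norm{\dot\ell}$, this produces an additional $(e_2\delta_2/\epsilon_2)\norm{z-z^{\rm imp}}$.
\end{itemize}
The other parts of $\norm{\dot\ell}$ are treated with exactly the same Young's inequalities as before. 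The quadratic coefficients $a_1,a_2$ are therefore unchanged, and so is the rate $A$.

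\emph{Assembling.} Summing the two estimates gives
\[
\tfrac12\dot V\le -AV+\bigl(e_3+\tfrac{\delta_2}{\epsilon_2}\bigr)\norm{\ell-\hat\ell}+\bigl(e_1+\tfrac{\delta_1}{\epsilon_1}+\tfrac{e_2\delta_2}{\epsilon_2}\bigr)\norm{z-z^{\rm imp}}.
\]
We split the linear terms as $(e_3,e_1)\cdot(\norm{\ell-\hat\ell},\norm{z-z^{\rm imp}})$ plus $(\delta_2/\epsilon_2,\ \delta_1/\epsilon_1+e_2\delta_2/\epsilon_2)\cdot(\cdots)$. Cauchy--Schwarz bounds these two pieces by $B\sqrt V$ and $E\sqrt V$, respectively, which gives $\tfrac12\dot V\le-AV+(B+E)\sqrt V$.

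\emph{Conclusion.} The same scalar comparison for $r=\sqrt V$ as in Theorem~\ref{th:tracking} yields the exponential bound; at $r=0$ the Dini derivative is used. For invariance, condition \eqref{nonconvex-robust-invariance-condition} makes the right-hand side a convex combination of two numbers strictly below $R$. A first-exit-time argument then shows that the trajectory never leaves the tube, so the estimate extends to all of $[0,t_f]$.

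The main point requiring care is the inner error. The disturbance $e_\ell$ enters there indirectly, through the motion of the target $z^{\rm imp}(t,\ell(t))$. It has to be routed through the metric-derivative bound rather than a classical chain rule, and it must be kept linear, not absorbed by Young's inequality, so that $A$ stays unchanged and the constant comes out exactly as $E$.
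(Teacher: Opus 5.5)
Your proposal is correct and follows the paper's proof: the paper identifies the same three extra linear contributions, namely $(\delta_2/\epsilon_2)\norm{\ell-\hat\ell}$ in the outer estimate and $(\delta_1/\epsilon_1)\norm{z-z^{\rm imp}}$ and $(e_2\delta_2/\epsilon_2)\norm{z-z^{\rm imp}}$ in the inner one, the last arising from the motion of $z^{\rm imp}(t,\ell(t))$. It bounds their sum by $E\sqrt V$ via Cauchy--Schwarz, leaves $a_1,a_2$ and hence $A$ unchanged, and concludes with the same scalar comparison and first-exit argument; your write-up just spells out steps the paper leaves implicit, such as the projection inequality with $e_q$ inside the projection and the enlarged bound on $\norm{\dot\ell}$.
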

\begin{proof}
Relative to the proof of Theorem~\ref{proposition::lyapunov}, the three
additional contributions are bounded by
\[
\frac{\delta_1}{\epsilon_1}b,\qquad
\frac{\delta_2}{\epsilon_2}a,\qquad
\frac{e_2\delta_2}{\epsilon_2}b,
\]
where the last term accounts for motion of the implicit target
$z^{\rm imp}(t,\ell)$. Cauchy--Schwarz bounds their sum by
$E\sqrt{a^2+b^2}$, proving the differential inequality. Scalar
comparison gives the displayed estimate.  Under
\eqref{nonconvex-robust-invariance-condition}, the comparison bound stays
strictly below $R$, and the same first-exit argument as in
Theorem~\ref{proposition::lyapunov} completes the continuation claim.
\end{proof}
\begin{remark}[Interpretation]
The disturbance contribution $E$ separates error entering the
inner primal--dual flow from error entering the outer update. Both enlarge
the local ultimate radius from $B/A$ to $(B+E)/A$.
\end{remark}

\section{Numerical evaluation}\label{sec:applications}
We evaluate the method in four layers that follow the mathematical
development. First, a controlled convex problem isolates the effect of the
directional regularization. Second, we turn the flat saddle-flow
counterexample of Figure~\ref{fig:pd} into a moving nonconvex branch-tracking
problem. Third, a coupled nonconvex test isolates the stability--bias
separation created by the moving proximal center. Fourth, SUMO experiments
examine simulator-in-the-loop deployment at one intersection under
vehicle-throughput and pedestrian-aware objectives.

The convex and traffic trajectories are computed by forward Euler. The
controlled nonconvex tests are instead integrated with high-accuracy
adaptive solvers; in particular, persistence of the conservative flat
oscillator cannot be attributed to Euler amplification or damping.
Continuous-time estimates are not asserted as guarantees for the Euler
implementations. Throughout, the terminal error is the value at the final
recorded iterate; traffic throughput is the number of vehicles discharged
over the stated four-cycle horizon. Dimensions, integration settings,
sampling budgets, and selection rules are reported with the corresponding
experiment.

\subsection{Convex tests}\label{sec:convex-experiment}
We first consider
\begin{equation}
  \min_x\ \norm{x-c(t)}^2
  \qquad\text{subject to}\qquad Ax=0,\quad g(x)\leq0.
  \label{eq:convex-test}
\end{equation}
We compare the directional augmented regularization with Tikhonov
regularization. The independently computed optimizer of the unregularized
problem is used only to measure error; it is not supplied to either flow.
Blue solid curves denote the proposed directional construction, and orange
dashed curves denote Tikhonov regularization.

\begin{figure}[t]
\centering
\begin{minipage}[t]{0.485\textwidth}
  \centering
  \includegraphics[width=\linewidth]{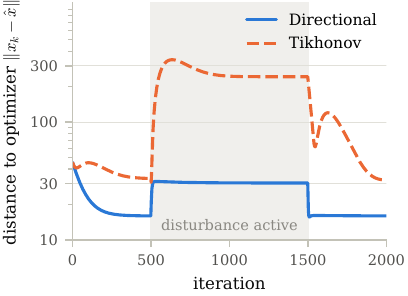}\\[-1mm]
  {\footnotesize (a) Additive disturbance on iterations $500$--$1500$.}
\end{minipage}\hfill
\begin{minipage}[t]{0.485\textwidth}
  \centering
  \includegraphics[width=\linewidth]{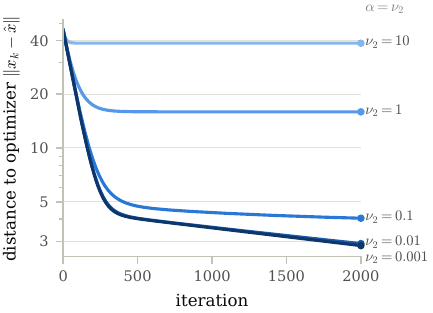}\\[-1mm]
  {\footnotesize (b) Terminal error as $\alpha=\nu_2$ decreases.}
\end{minipage}

\vspace{2mm}
\begin{minipage}[t]{0.485\textwidth}
  \centering
  \includegraphics[width=\linewidth]{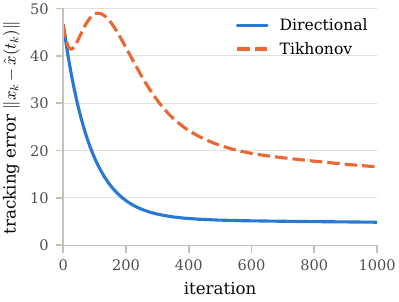}\\[-1mm]
  {\footnotesize (c) Moving target, $\nu_2=\alpha=0.25$.}
\end{minipage}\hfill
\begin{minipage}[t]{0.485\textwidth}
  \centering
  \includegraphics[width=\linewidth]{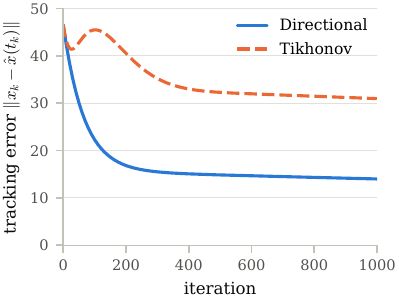}\\[-1mm]
  {\footnotesize (d) Moving target, $\nu_2=\alpha=1$.}
\end{minipage}
\caption{Directional versus Tikhonov regularization on
\eqref{eq:convex-test}. The row-space gain is $\nu_1=20$ in
(a)--(b) and $\nu_1=35$ in (c)--(d). The disturbance in (a) lies in the row
space of $A$, where the equality-residual penalty supplies additional
convexity without moving a feasible optimizer.}
\label{fig:convex-tests}
\end{figure}

In Figure~\ref{fig:convex-tests}(a), we design a disturbance-rejection
experiment by freezing the problem and injecting a sustained row-space
perturbation during iterations $500$--$1499$.  The equality-residual penalty
adds convexity in this direction without moving the feasible optimizer.
During the disturbance, the late-window error is $30.36$ for the directional
flow versus $243.15$ for Tikhonov; after its removal, the terminal errors are
$15.94$ versus $32.16$.  Panel~(b) isolates the associated bias trade-off:
decreasing $\alpha=\nu_2$ from $10$ to $10^{-3}$ lowers the terminal error
from $38.62$ to $2.84$.  Panels~(c)--(d) provide a second test by allowing the
problem data, and hence the optimizer, to move.  The terminal tracking errors
are $4.88$ versus $16.58$ at gain $0.25$
and $14.01$ versus $30.98$ at gain $1$.  Thus the directional construction
rejects exogenous perturbations and tracks endogenous optimizer motion more
effectively, while making its regularization bias explicit.

\subsection{From the flat saddle-flow oscillator to nonconvex tracking}
\label{sec:flat-nonconvex-experiment}
We consider the moving nonconvex problem
\begin{equation}
\begin{aligned}
\min_{(x_1,x_2)\in\real^2}\quad
&f(x_1,x_2)=x_1+\frac{1}{4}(x_2^2-1)^2,\\
\text{subject to}\quad
&x_1-s(t)=0,\qquad s(t)=\frac{3}{4}+\frac{t}{50}.
\end{aligned}
\label{eq:flat-nonconvex-tracking}
\end{equation}
Its two KKT branches are
\[
(x^\star_{1,\pm}(t),x^\star_{2,\pm}(t),p^\star_\pm(t))
=(s(t),\pm1,-1).
\]
The equality fixes the curvature-free coordinate \(x_1\), whereas the
double well selects one of two isolated solutions in the feasible direction.
We track the positive branch.

\paragraph{Why tracking errors persist in the plain primal--dual flow}
The unregularized dynamics are
\begin{equation}
\dot x_1=-(1+p),\qquad
\dot x_2=-x_2(x_2^2-1),\qquad
\dot p=x_1-s(t).
\label{eq:flat-plain-flow}
\end{equation}
To distinguish the frozen KKT point from a trajectory that follows the
moving optimizer, first define the ordinary primal and multiplier errors
relative to the positive KKT branch:
\[
e_x(t):=x_1(t)-s(t),\qquad
e_p(t):=p(t)+1.
\]
Since \(\dot s=1/50\),
\begin{equation}
\begin{aligned}
\dot e_x
 &=\dot x_1-\dot s
   =-(1+p)-\frac{1}{50}
   =-e_p-\frac{1}{50},\\
\dot e_p
 &=\dot p
   =x_1-s(t)
   =e_x.
\end{aligned}
\label{eq:flat-frozen-kkt-errors}
\end{equation}
The constant term in \(\dot e_x\) shifts the center of this oscillator from
\((0,0)\) to \((0,-1/50)\).  Accordingly, define the squared combined error
\[
\mathcal E(t):=e_x(t)^2+\left(e_p(t)+\frac{1}{50}\right)^2.
\]
Using~\eqref{eq:flat-frozen-kkt-errors}, its derivative is
\[
\begin{aligned}
\dot{\mathcal E}(t)
 &=2e_x\dot e_x
   +2\left(e_p+\frac{1}{50}\right)\dot e_p\\
 &=2e_x\left(-e_p-\frac{1}{50}\right)
   +2\left(e_p+\frac{1}{50}\right)e_x
 =0.
\end{aligned}
\]
Consequently,
\begin{equation}
\mathcal E(t)=\mathcal E(0).
\label{eq:flat-plain-invariant}
\end{equation}
If \(\mathcal E(0)>0\), the combined error therefore remains nonzero for all
time.

\paragraph{Regularized two-time-scale flow}
With \(z=(x_1,x_2,p)\) and \(\ell=(\ell_1,\ell_2)\), the proposed dynamics
specialize to
\begin{equation}
\begin{aligned}
\frac{1}{50}\dot x_1
 &=-1-p-2(x_1-\ell_1),&
\frac{1}{50}\dot x_2
 &=-x_2(x_2^2-1)-2(x_2-\ell_2),\\
\frac{1}{50}\dot p
 &=x_1-s(t)-\frac{1}{100}p,&
\frac{1}{10}\dot\ell_i
 &=2(x_i-\ell_i)-\frac{1}{20}\ell_i,
\qquad i\in\{1,2\}.
\end{aligned}
\label{eq:flat-regularized-flow}
\end{equation}
We initialize
\[
(x_1(0),x_2(0),p(0))
=\left(1,\frac{11}{20},-1\right),\qquad
\ell(0)=\left(\frac{40}{41},\frac{22}{41}\right).
\]
Both systems are integrated on \(0\leq t\leq40\) with DOP853, using relative
tolerances \(2\times10^{-11}\) for the plain flow and \(2\times10^{-10}\)
for the regularized flow. The reference curves shown below are evaluated
independently and are used only to measure error.

\begin{figure}[t]
\centering
\includegraphics[width=\textwidth]
  {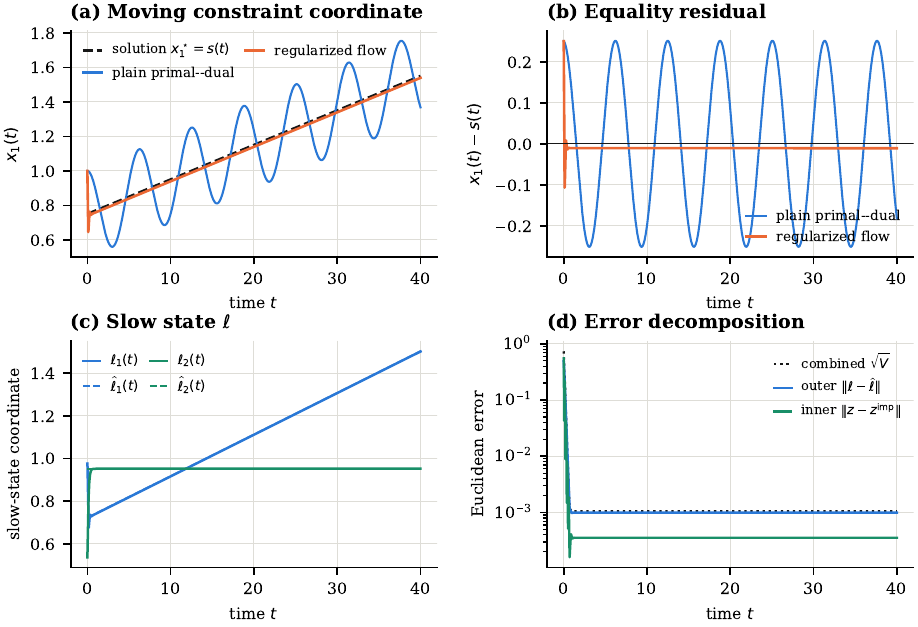}
\caption{Tracking diagnostics for~\eqref{eq:flat-nonconvex-tracking}.
(a) The moving coordinate \(x_1^\star=s(t)\): the plain flow oscillates,
whereas the regularized flow follows its nearby biased target.
(b) The same contrast in the signed equality residual.
(c) The slow state \(\ell\) and its analytic target \(\widehat\ell\).
(d) The outer error \(\|\ell-\widehat\ell\|\), inner error
\(\|z-z^{\rm imp}(t,\ell)\|\), and their combination \(\sqrt V\).}
\label{fig:flat-nonconvex-diagnostics}
\end{figure}

Figure~\ref{fig:flat-nonconvex-diagnostics} shows the mechanism in time.
In panels~(a)--(b), the plain
flow oscillates around the moving solution with constant amplitude, exactly
as predicted by the invariant~\eqref{eq:flat-plain-invariant}; consequently,
neither its primal tracking error nor its equality residual decays. The
regularized flow instead suppresses this oscillation after a short transient
and follows the moving coordinate with a small steady offset. Panels
\ref{fig:flat-nonconvex-diagnostics}(c)--(d) expose the two-time-scale
mechanism behind that behavior: the fast primal--dual state approaches the
instantaneous proximal saddle, while the slow center approaches the selected
moving branch, so both the inner and outer errors become small and remain
bounded as the optimizer moves.

\FloatBarrier
\begin{figure}[t]
\centering
\includegraphics[width=\textwidth]
  {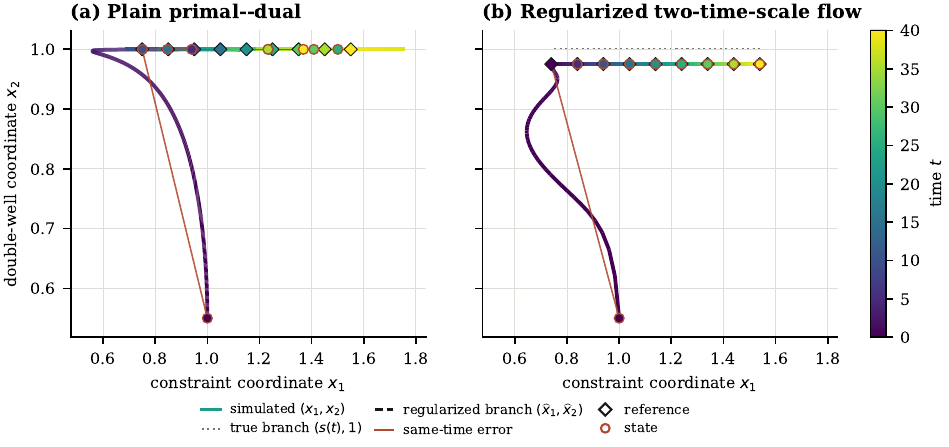}
\caption{The same trajectories in the \((x_1,x_2)\) plane. Diamonds show the
corresponding reference points and circles the simulated states at
\(t=0,5,\ldots,40\); red segments are same-time tracking errors. The plain
trajectory repeatedly advances and retreats along the true \(x_2=1\) branch,
whereas the regularized trajectory settles onto the nearby
\(\widehat x_2=0.9753\) branch and follows it.}
\label{fig:flat-nonconvex-state}
\end{figure}

Figure~\ref{fig:flat-nonconvex-state} gives the geometric counterpart. The
plain trajectory repeatedly advances and
retreats along the positive-well branch, leaving persistent same-time error
segments, whereas the regularized trajectory enters a nearby biased branch
and then advances with it. The remaining displacement from the true branch
is therefore the expected regularization bias, rather than an undamped
tracking oscillation.

\subsection{Coupled nonconvex tracking: stability versus bias}
\label{sec:nonconvex-experiment}
The preceding example is intentionally separable: an ordinary augmentation
can damp its oscillatory coordinate while gradient descent handles its
nonconvex coordinate. We now couple negative curvature in the
constraint-normal direction to a moving double-well branch in order to
isolate why the moving proximal center is useful. Consider
\begin{equation}
\begin{aligned}
\min_{(x_1,x_2)\in\real^2}\quad
&f(x_1,x_2)
=-\frac{3}{2}x_1^2+\frac{1}{4}x_1x_2
  +\frac{1}{2}(x_2^2-1)^2,\\
\text{subject to}\quad
&h_t(x_1,x_2)=x_1-s(t)=0,
\qquad s(t)=-1+\frac{t}{20},\qquad 0\leq t\leq40.
\end{aligned}
\label{eq:coupled-nonconvex-tracking}
\end{equation}
The positive local-solution branch is
\[
x_1^\star(t)=s(t),\qquad
2x_2^\star(t)\bigl((x_2^\star(t))^2-1\bigr)
  +\frac{1}{4}s(t)=0,\qquad
p^\star(t)=3s(t)-\frac{1}{4}x_2^\star(t),
\]
where $x_2^\star(t)$ is the largest real root.

\paragraph{Compared dynamics}
We compare with direct Tikhonov regularization,
\[
\mathcal L_{\nu,\alpha}^{t}(x,p)
=f(x)+p(x_1-s(t))+\frac{\nu}{2}\|x\|^2
-\frac{\alpha^2}{2}p^2.
\]
With the fixed speed scale $1/20$, its saddle flow is
\begin{equation}
\begin{aligned}
\frac{1}{20}\dot x_1&=3x_1-\frac{1}{4}x_2-p-\nu x_1,\\
\frac{1}{20}\dot x_2&=-\frac{1}{4}x_1-2x_2(x_2^2-1)-\nu x_2,\\
\frac{1}{20}\dot p&=x_1-s(t)-\alpha^2p.
\end{aligned}
\label{eq:direct-tikhonov-nonconvex}
\end{equation}
For the proposed regularization, we set
$c=1/5$, $\alpha=1/10$, $\nu=1/20$,
$\epsilon_1=1/1000$, and $\epsilon_2=1/10$ in~\eqref{dynamics}, obtaining
\begin{equation}
\begin{aligned}
\frac{1}{1000}\dot x_1
 &=3x_1-\frac{1}{4}x_2-p-5(x_1-\ell_1),\\
\frac{1}{1000}\dot x_2
 &=-\frac{1}{4}x_1-2x_2(x_2^2-1)-5(x_2-\ell_2),\\
\frac{1}{1000}\dot p
 &=x_1-s(t)-\frac{1}{100}p,\\
\frac{1}{10}\dot\ell_i
 &=5(x_i-\ell_i)-\frac{1}{20}\ell_i,\qquad i\in\{1,2\}.
\end{aligned}
\label{eq:coupled-regularized-flow}
\end{equation}

\begin{figure}[!t]
\centering
\includegraphics[width=\textwidth]
  {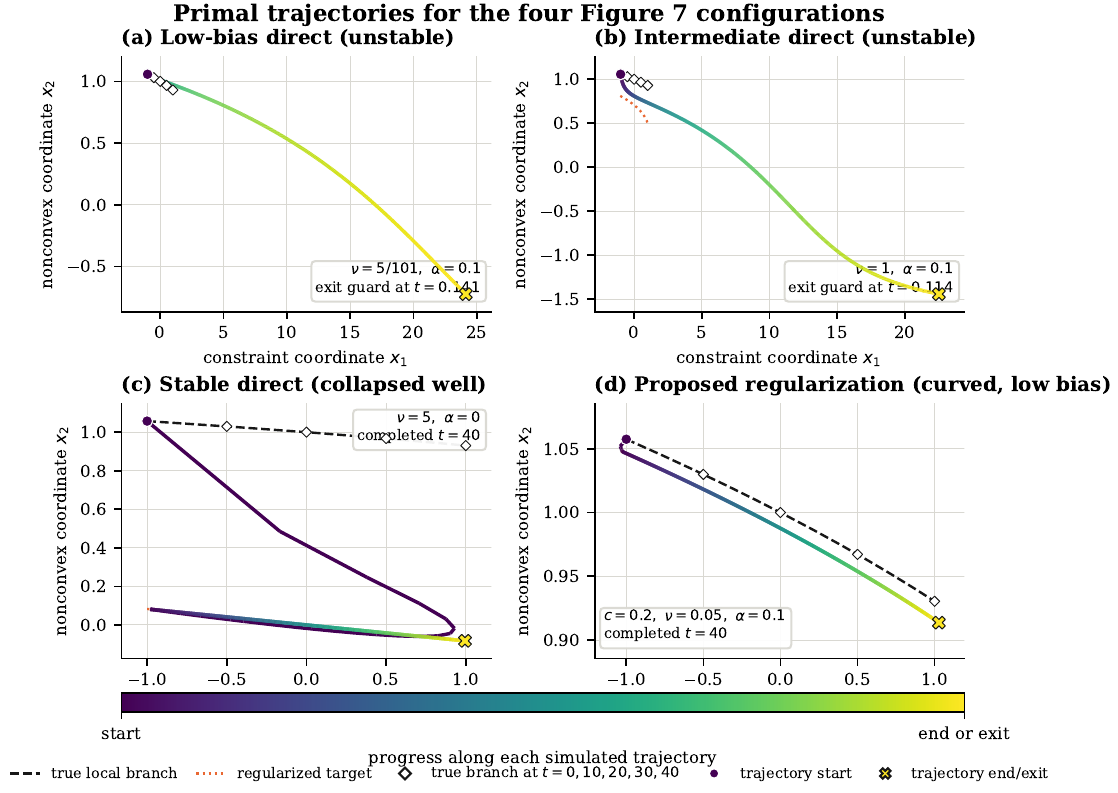}
\caption{Primal trajectories in the $(x_1,x_2)$ state space for the four
configurations.  The corresponding frozen-time landscapes at $t_0=20$ are
shown in Figure~\ref{fig:tikhonov-tradeoff-mechanism}.  Color indicates
progress along each simulated trajectory, the circle marks its initial state,
and the cross marks its terminal state or numerical exit.  Diamonds locate
the true branch at $t=0,10,20,30,40$.  To prevent overflow, the two unstable
direct trajectories in panels (a)--(b) stop when the norm of the full
primal--dual state reaches the exit guard $25$; panels (c)--(d) are integrated
over the entire interval $0\leq t\leq40$.  The stable direct flow suppresses
the instability only by collapsing the desired well, whereas the
proposed dynamics follow the curved positive branch with low bias.}
\label{fig:tikhonov-state-space-trajectories}
\end{figure}

\begin{figure}[!t]
\centering
\includegraphics[width=\textwidth]
  {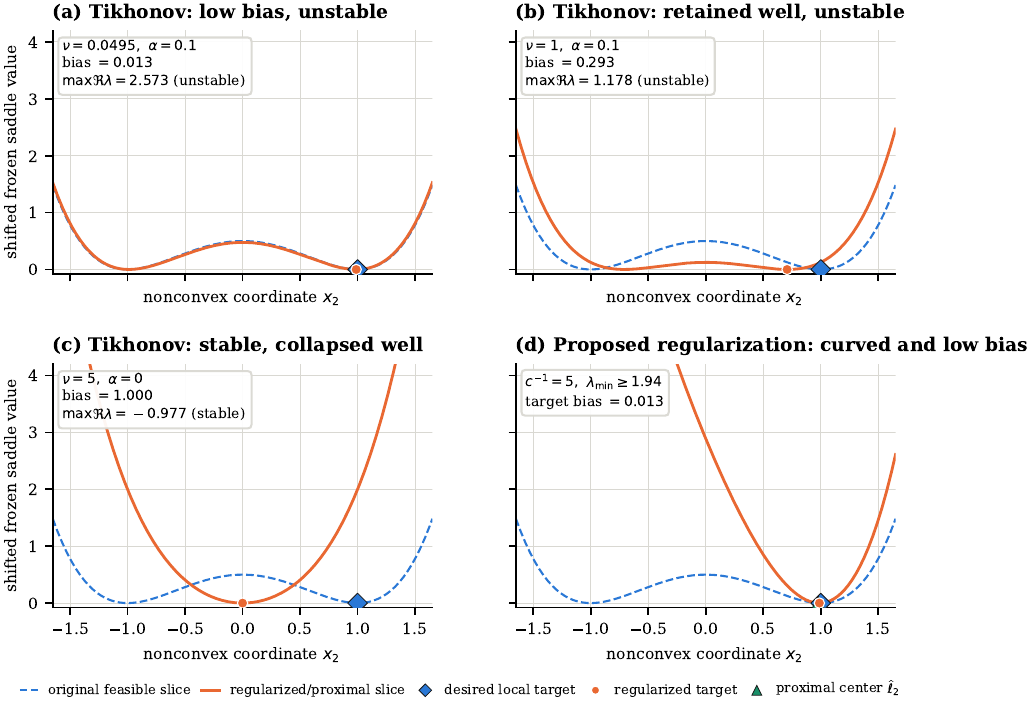}
\caption{Frozen stability--bias mechanism at $s=0$.  Dashed curves are the
original feasible slice and solid curves the direct-Tikhonov or proximal
slice after eliminating $x_1$ and $p$; every curve is shifted by its own
minimum.  (a) The direct parameters sharing the target of the proposed
regularization preserve
the desired well but are unstable.  (b) Increasing $\nu$ reduces instability
while already producing visible target displacement.  (c) The stable direct
tuning fills in the double well and moves the target to $x_2=0$.
(d) A center near the selected branch allows the proximal slice to be strongly
convex while retaining the low-bias target.}
\label{fig:tikhonov-tradeoff-mechanism}
\end{figure}

Figures~\ref{fig:tikhonov-state-space-trajectories}
and~\ref{fig:tikhonov-tradeoff-mechanism} show the resulting four behaviors.
Panels~(a)--(c) expose the stability--bias tradeoff faced by direct Tikhonov
regularization in this example.  The low-bias tuning in panel~(a) nearly
preserves the desired positive branch, with bias $0.01255$, but the trajectory
immediately turns away and exits the local region.  Increasing the direct
regularization in panel~(b) raises the bias to $0.2927$ and changes the escape
path, yet the flow remains unstable.  Only the much stronger tuning in
panel~(c) stabilizes the direct dynamics; by then the positive well has
disappeared, the target has moved to $x_2=0$, and the bias is $1$.  Thus none
of the direct configurations in panels~(a)--(c) is both stable and low bias:
they are either unstable or severely biased.  In contrast, the trajectory
generated by the proposed regularization in panel~(d) remains bounded,
retains bias $0.01255$, and follows
the curved positive branch with a small offset.  The state-space plots display
the dynamical consequence of this tradeoff, while the frozen landscapes show
how the proposed construction retains the desired well while stabilizing
the local motion.

\paragraph{What the frozen picture shows}
We freeze the problem at $t_0=20$, where $s(t_0)=0$ and the selected solution
is $(x_1^\star,x_2^\star,p^\star)=(0,1,-1/4)$.  Figure~\ref{fig:tikhonov-tradeoff-mechanism}
turns each frozen saddle problem into a one-dimensional landscape in $x_2$
by first maximizing over $p$ and minimizing over $x_1$.  The dashed curve is
the original double well and the solid curve is the landscape produced by
the indicated regularization; shifting each curve vertically to have minimum
zero does not change its wells or convexity.  Each panel reports two summary
numbers.  The bias is the distance from the regularized primal target to
$x^\star(t_0)$, while the stability indicator is the largest real part of the
linearized-flow eigenvalues.  A positive indicator means that nearby
perturbations grow; a negative one means that they decay exponentially.  The
calculation isolating the two regularizing terms behind this comparison is
given in Appendix~\ref{app:figure8-landscapes}.

The direct method faces a structural conflict: the same parameter
$\nu$ that supplies missing convexity in the $x_1$ direction also penalizes
the double-well coordinate $x_2$.  This is especially transparent when
$\alpha=0$.  The equality then enforces $x_1=0$, and the direct landscape is
\begin{equation}
\psi_{\nu,0}(x_2)
=\frac{1}{2}(x_2^2-1)^2+\frac{\nu}{2}x_2^2.
\label{eq:tikhonov-frozen-slice}
\end{equation}
For $0\leq\nu<2$, its two wells lie at
$x_2=\pm\sqrt{1-\nu/2}$ and move toward the origin as $\nu$ increases.  They
merge at $\nu=2$.  Thus increasing $\nu$ improves stability in one direction
while simultaneously displacing, and eventually eliminating, the desired
well in the other.

\FloatBarrier

\subsection{Simulator-in-the-loop traffic control}\label{sec:traffic}
\par\noindent
The traffic study asks whether the proposed dynamics can be embedded in a
black-box workflow. SUMO~\cite{PAL-MB-LBW-JE-YE-YF-RH-LL-JR-PW-EW:18}
provides function values, while a differentiable surrogate provides the
local derivatives needed by the dynamics.

\subsubsection{Composite model and implemented dynamics}
Consider the composite optimization problem
\begin{align}
  \begin{split}\label{problem}
    \min_x\quad &f_t(x,\mathcal F(x)), \\
    \text{subject to}\quad &h_t(x,\mathcal F(x))=0,\\
    &g_t(x,\mathcal F(x))\leq0,
  \end{split}
\end{align}
where $\mathcal F$ is known through measurements, data, or simulation rather
than in closed form. Define
\[
\widetilde f_t(x)=f_t(x,\mathcal F(x)),\qquad
\widetilde h_t(x)=h_t(x,\mathcal F(x)),\qquad
\widetilde g_t(x)=g_t(x,\mathcal F(x)).
\]
With the Jacobian convention of Section~\ref{sec:notation}, the chain rule
gives
\begin{align*}
\nabla\widetilde f_t
 &=\nabla_xf_t+\nabla\mathcal F\,\nabla_yf_t,\\
\nabla\widetilde h_t
 &=\nabla_xh_t+\nabla\mathcal F\,\nabla_yh_t,
&
\nabla\widetilde g_t
 &=\nabla_xg_t+\nabla\mathcal F\,\nabla_yg_t,
\end{align*}
where all terms on the right are evaluated at $(x,\mathcal F(x))$.

For reference, the unregularized composite primal--dual dynamics retained
from the original formulation is
\begin{align}
  \begin{split}\label{primal_dual_unknown}
    \dot{x} &=-\nabla\widetilde f_t(x)
    -\nabla\widetilde h_t(x)p-\nabla\widetilde g_t(x)q,\\
    \dot{p} &=\widetilde h_t(x),\\
    \dot{q} &=[\widetilde g_t(x)]^+_q.
  \end{split}
\end{align}
Because the black-box composition need not be convex, the implemented
two-time-scale convexified dynamics is
\begin{align}
  \begin{split}\label{primal_dual_known}
  \epsilon_1\dot{x} &=-\nabla\widetilde f_t(x)
  -\nabla\widetilde h_t(x)p-\nabla\widetilde g_t(x)q
  -\frac{x-\ell}{c},\\
  \epsilon_1\dot{p} &=\widetilde h_t(x)-\alpha^2p,\\
  \epsilon_1\dot{q} &=[\widetilde g_t(x)-\alpha^2q]^+_q,\\
  \epsilon_2\dot{\ell} &=\frac{x-\ell}{c}-\nu\ell.
  \end{split}
\end{align}
Evaluating either vector field requires values of $\mathcal F$ and an
estimate of its Jacobian. In the experiments, SUMO supplies the former and
an RBF surrogate supplies the latter. Figure~\ref{fig:traffic-workflow} separates sampling, surrogate fitting,
optimization, feasibility enforcement, and deployment.
\begin{figure}[tb]
\centering
\resizebox{0.95\textwidth}{!}{%
\begin{tikzpicture}[
  x=1cm,y=1cm,
  box/.style={draw=black!55, rounded corners=2pt, align=center,
    minimum height=0.92cm, text width=2.35cm, inner sep=3pt,
    font=\footnotesize},
  data/.style={box, fill=black!3},
  sim/.style={box, draw=guidegreen!75!black, fill=guidegreen!6},
  model/.style={box, draw=guideorange!85!black, fill=guideorange!6},
  opt/.style={box, draw=guideblue, fill=guideblue!6, very thick},
  deploy/.style={box, draw=guidegreen!75!black, fill=guidegreen!8,
    very thick},
  arrow/.style={-{Latex[length=2mm]}, thick, draw=black!65},
  warm/.style={-{Latex[length=2mm]}, thick, dashed, draw=guideblue}
]
\node[data] (state) at (0,0) {current state\\and cycle demand};
\node[data] (sample) at (3.05,0) {sampled feasible\\timing plans};
\node[sim] (sumosample) at (6.1,0) {SUMO throughput samples\\$\mathcal F_t(s^{(j)})$};
\node[model] (rbf) at (9.15,0) {differentiable\\RBF surrogate};

\node[opt] (flow) at (9.15,-2.0) {two-time-scale\\primal--dual flow};
\node[data] (project) at (6.1,-2.0) {exact feasibility\\projection + diagnostics};
\node[deploy] (commit) at (3.05,-2.0) {commit feasible plan\\to SUMO};
\node[sim] (next) at (0,-2.0) {realized throughput\\and next state};

\draw[arrow] (state) -- (sample);
\draw[arrow] (sample) -- (sumosample);
\draw[arrow] (sumosample) -- (rbf);
\draw[arrow] (rbf) -- (flow);
\draw[arrow] (flow) -- (project);
\draw[arrow] (project) -- (commit);
\draw[arrow] (commit) -- (next);
\draw[arrow] (next.west) -- ++(-0.55,0) |- (state.west)
  node[pos=0.28,left,font=\scriptsize,align=right]{physical state\\feedback};
\draw[warm] (flow.south) -- ++(0,-0.48) -| (sample.south)
  node[pos=0.47,below,font=\scriptsize,text=guideblue]
  {warm start $(x,\ell,p,q)$ at the next cycle};
\end{tikzpicture}
}
\caption{Receding simulator-in-the-loop workflow. Sampling and deployment
start from the same current traffic state. A fresh surrogate is fitted at
each cycle, the dynamical state is warm-started, and the committed timing
vector is projected to the exact cycle-sum and minimum-green constraints
before its realized throughput is recorded.}
\label{fig:traffic-workflow}
\end{figure}

\subsubsection{Single-intersection problem and receding-horizon trajectories}
We consider a controlled intersection with eight admissible signal stages.
The decision is the timing vector $s=(s^1,\ldots,s^8)$. The stages form a
$160$-second cycle and each stage
receives at least $5$ seconds. If $\mathcal F_t(s)$ is the number of vehicles
that leave during cycle $t$, the original timing problem is
\begin{equation}
\begin{array}{lcccr}
  \begin{aligned}
    \min_s &\quad -\mathcal F_t(s),\\
    \text{subject to} &\quad \sum_{i=1}^8s^i=160,\\
    &\quad s^i\geq5,
  \end{aligned}
  &&\text{or}&&
  \begin{aligned}
    \min_{x,s} &\quad -x,\\
    \text{subject to} &\quad x=\mathcal F_t(s),\\
    &\quad \sum_{i=1}^8s^i=160,\\
    &\quad s^i\geq5.
  \end{aligned}
\end{array}
\label{eq:traffic-timing}
\end{equation}
Equivalently, the feasible set is
\[
\mathcal S_1=\{s\in\real^8:\ \mathbf1^Ts=160,\ s_i\geq5\}.
\]
LICQ holds for this equality-plus-bound system. The objective is sampled in
SUMO, fitted with an RBF surrogate, and differentiated only through that
surrogate.

\begin{figure}[!t]
  \centering
  \includegraphics[width=0.47\textwidth,keepaspectratio=true]{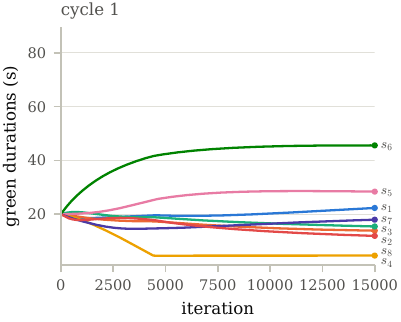}
  \includegraphics[width=0.47\textwidth,keepaspectratio=true]{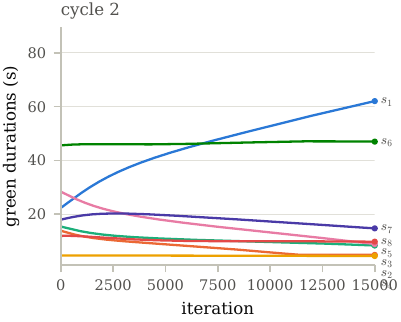}\\
  \includegraphics[width=0.47\textwidth,keepaspectratio=true]{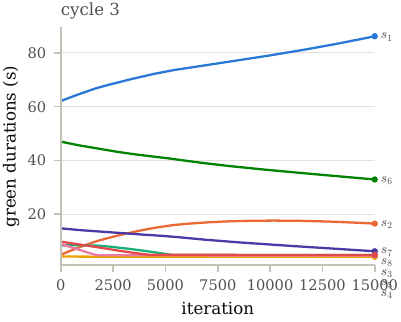}
  \includegraphics[width=0.47\textwidth,keepaspectratio=true]{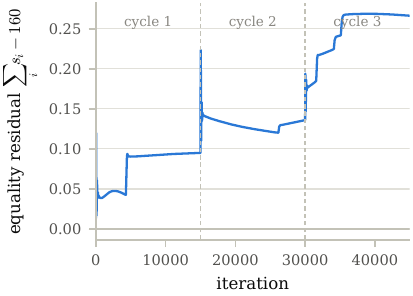}
  \caption{Evolution of the primal variables after three successive
  surrogate updates (top-left: cycle $1$; top-right: cycle $2$;
  bottom-left: cycle $3$). The bottom-right panel reports the equality residual
  $\sum_i s^i-160$ across the three cycles (dashed lines mark the
  surrogate updates); its small nonzero floor is the dual-regularization
  bias $\alpha^2\hat p$.}
  \label{fig:several_steps}
\end{figure}

The online cost of the dynamics is dominated by one surrogate-gradient
evaluation per Euler step, which is closed form for the RBF interpolant.
For the $500$-center RBF used in the diagnostic trajectories of
Figure~\ref{fig:several_steps} ($m=8$), one gradient evaluation costs about
$19\,\mu$s on the reproduction machine and a full primal--dual step costs
about $39\,\mu$s. Thus a complete $15000$-iteration solve takes about
$0.6$\,s, a small fraction of one $160$\,s cycle. This makes per-cycle
re-optimization with warm starts operational; the binding cost is simulator
sampling, a data-acquisition cost shared by any simulation-based method.

More precisely, Figure~\ref{fig:several_steps} uses one seed cloud of
$500$ timing vectors, reevaluated at each of three successive surrogate
updates. At the first update,
$x(0)=\ell(0)=20\,\mathbf 1$, $p(0)=0$, and $q(0)=0$; all four states are then
warm-started at subsequent updates. Each update uses $15000$ simultaneous
forward-Euler steps of size $0.025$, with
\[
c=0.05,\qquad \epsilon_1=1,\qquad \epsilon_2=0.9,
\qquad \alpha=1,\qquad \nu=0.001.
\]
\FloatBarrier

\subsubsection{Controller information and comparison protocol}
\label{sec:traffic-benchmark}
The comparison is deliberately organized around traffic-signal deployment
architectures rather than as a benchmark of general-purpose black-box
optimization algorithms. We consider fixed-time and queue-reactive traffic
controllers. The controller classes do not use the same information. This distinction is
part of the experiment: the proposed policy commits one timing vector for a
cycle, whereas max-pressure reacts to measured queues within the cycle.
Table~\ref{tab:controller-information} states the access model explicitly.

\begin{table}[tb]
\centering
\caption{Controller information and deployment structure. ``Aggregate''
means one throughput value for an entire simulated plan; ``lane state'' means
live per-lane vehicle counts during deployment.}
\label{tab:controller-information}
\small
\begin{tabularx}{\textwidth}{@{}lYYY@{}}
\toprule
Controller & Decision structure & Information used to construct or tune it
& Information required after deployment \\
\midrule
Proposed method
& Fixed eight-stage plan for each cycle
& Aggregate SUMO evaluations of sampled plans
& None within a deployed cycle \\
Equal split / Webster (SUMO)
& Fixed cyclic plan
& Cycle length; Webster also uses route-demand information
& None within a deployed cycle \\
Max-pressure
& Reactive phase selection with a minimum-green setting
& Minimum green selected on a training trace
& Live upstream and downstream lane counts \\
\bottomrule
\end{tabularx}
\end{table}

\subsubsection{Single intersection with full sensing: the benchmark}
\label{sec:plain-benchmark}
Table~\ref{tab:plain-benchmark} reports four traffic-control methods on the
single-intersection instance over four cycles of shifting demand, with the
simulator state carried from one cycle to the next. For the proposed method,
each run samples $80$ timing plans independently in each cycle, and the
reported mean $\pm$ standard deviation is computed over five sampling seeds.
Each cycle uses $15000$ simultaneous forward-Euler steps of size $0.025$, with
\[
c=0.05,\qquad \epsilon_1=1,\qquad \epsilon_2=0.9,
\qquad \alpha=1,\qquad \nu=0.001.
\]
The first cycle starts from $x=\ell=20\,\mathbf 1$, $p=0$, and $q=0$;
all four states are warm-started thereafter. The local sampling radius is
$26$ seconds in the first cycle and $15$ seconds in each subsequent cycle,
and the five sampling seeds are $0$--$4$.
The Webster baseline is generated with SUMO's
\texttt{tlsCycleAdaptation.py} implementation of Webster's formula
\cite{SUMO-TLS:26}.
Max-pressure obtains slightly higher throughput because it continuously
adapts to live traffic conditions within each cycle; among the methods that
commit to a cyclic timing plan, the proposed method performs best.

\begin{table}[tb]
\centering
\caption{Single-intersection benchmark under shifting demand. The proposed
method samples $80$ timing plans independently in each of four cycles; its
entries are the mean $\pm$ standard deviation over five sampling seeds.
Reference controllers are deterministic on this instance.}
\label{tab:plain-benchmark}
\small
\setlength{\tabcolsep}{4pt}
\begin{tabular}{@{}lrl@{}}
\toprule
Controller & Throughput & Sensing \\
\midrule
Max-pressure & $438\pm0$ & live counts, all lanes \\
Proposed & $410\pm12$ & aggregate objective \\
Webster (SUMO) & $317\pm0$ & route demand \\
Equal split & $227\pm0$ & none \\
\bottomrule
\end{tabular}
\end{table}

\subsubsection{A second objective: pedestrians}
\label{sec:pedestrian}
Our methods are able to deal with cost functions that are not amenable to other methods. As an example, we design an scenario where
the intersection now carries pedestrian demand, with its four crossings
separately signalized. We
therefore use the delay-sensitive objective
\[
  F_{\mathrm{veh}}(s)
  -\beta_{\mathrm{ped}}\,\overline W_{\mathrm{ped}}(s),
\]
where $F_{\mathrm{veh}}$ is vehicle throughput and
$\overline W_{\mathrm{ped}}$ is mean accumulated pedestrian waiting time.
The mean includes all pedestrians whose trips have begun by the horizon,
including those whose trips remain unfinished.

For each $\beta_{\mathrm{ped}}\in\{0,0.5,1,2\}$, the dynamics start from
the equal split and reuse the same two cubic-RBF surrogates, fitted from one
shared seed cloud of $600$ SUMO evaluations. Each proposed plan uses
$20000$ simultaneous forward-Euler steps of size $0.01$, with
\[
c=0.05,\qquad \epsilon_1=0.4,\qquad \epsilon_2=0.9,
\qquad \alpha=0.5,\qquad \nu=0.
\]
For every value of $\beta_{\mathrm{ped}}$, the states are initialized at
$x=\ell=20\,\mathbf 1$, $p=0$, and $q=0$.
Table~\ref{tab:pedestrian-waiting}
evaluates the resulting plans on ten unseen SUMO seeds. Across the
tested local sensitivity path, increasing $\beta_{\mathrm{ped}}$ from $0$ to
$2$ reduces mean waiting from $3.13$ to $1.40$\,s, while vehicle throughput
decreases from $33.6$ to $25.1$. The SUMO Webster plan lies inside this
trade-off region: at the level of the reported means, the
$\beta_{\mathrm{ped}}=2$ plan serves more vehicles and has lower pedestrian
waiting. By contrast, vehicle-only max-pressure serves fewer vehicles and
produces much larger pedestrian delay; its phase
selection again freezes at three switches, and all three tested minimum-green
settings give the same training-trace outcome.

\begin{table}[H]
\centering
\caption{Pedestrian-weight sensitivity over one $160$\,s cycle: mean
$\pm$ standard deviation over ten held-out SUMO seeds ($101$--$110$).
The four proposed rows reuse the same $600$ training evaluations;
Webster (SUMO) is constructed from route demand using
\texttt{tlsCycleAdaptation.py}, and max-pressure is re-run live with minimum
green $5$.}
\label{tab:pedestrian-waiting}
\small
\setlength{\tabcolsep}{4pt}
\begin{tabular}{@{}lrr@{}}
\toprule
Controller / weight & Vehicles & Mean wait (s) \\
\midrule
Proposed, $\beta_{\mathrm{ped}}=0$   & $33.6\pm2.2$ & $3.13\pm0.16$ \\
Proposed, $\beta_{\mathrm{ped}}=0.5$ & $32.0\pm3.0$ & $2.97\pm0.17$ \\
Proposed, $\beta_{\mathrm{ped}}=1$   & $31.9\pm2.4$ & $2.49\pm0.19$ \\
Proposed, $\beta_{\mathrm{ped}}=2$   & $25.1\pm2.2$ & $1.40\pm0.11$ \\
\midrule
Webster (SUMO) & $22.3\pm2.3$ & $3.03\pm0.26$ \\
Max-pressure  & $11.1\pm2.0$ & $13.08\pm0.06$ \\
\bottomrule
\end{tabular}
\end{table}

\FloatBarrier

\section{Conclusions}\label{sec:conclusions}
This paper develops a layered continuous-time framework for tracking
solutions of time-dependent constrained problems. A saddle-flow tracking
estimate is made applicable to convex problems through a geometry-aware
regularization, and to nonconvex problems through a local proximal
convexification. Ongoing work addresses:
\begin{itemize}
\item Discrete-time versions of the flows, with tracking and robustness
  bounds under explicit stepsize conditions.
\item Turning the bias--tracking formulas into principled
  parameter-selection rules.
\item Solution-curve bifurcations and active-set changes beyond the
  regular regime considered here.
\item Extending the reproducible benchmark of
  Section~\ref{sec:traffic-benchmark} to larger network instances,
  alternative surrogates, and distributed implementations.
\end{itemize}

\section*{AI-use disclosure}
The authors used generative artificial intelligence tools to assist with
drafting, language editing, and reviewing the manuscript, and with developing
and reviewing code for the numerical simulations. The authors critically
reviewed and verified all AI-assisted material and take full responsibility
for the final content
of the manuscript.

\appendix
\section{Block-matrix derivation for the local envelope}
\label{app:envelope-derivation}
This appendix gives the detailed calculation underlying
Proposition~\ref{lemma::gradient_varphi}; the main text retains only the
proof outline and its role in the construction.

\begin{proof}\leavevmode
Fix $t$ and abbreviate the implicit solution by
$(\inv{x},\inv{p},\inv{q})$.  For a vector $v$, let $v_+$ denote its
componentwise positive part.  The dual optimality conditions for
$\alpha>0$ give
\begin{equation}\label{dual-elimination-envelope}
 \inv{p}=\frac{h_t(\inv{x})}{\alpha^2},
 \qquad
 \inv{q}=\frac{(g_t(\inv{x}))_+}{\alpha^2}.
\end{equation}
Introduce the continuously differentiable, locally $C^{1}$ penalty
\[
 \psi_\alpha^t(x)=f_t(x)
 +\frac{\norm{h_t(x)}^2}{2\alpha^2}
 +\frac{\norm{(g_t(x))_+}^2}{2\alpha^2}.
\]
Evaluating the maximized dual terms by
\eqref{dual-elimination-envelope} shows that
\begin{equation}\label{penalized-envelope-representation}
 \varphi_{c,\alpha}^t(\ell)
 =\psi_\alpha^t(\inv{x})
 +\frac1{2c}\norm{\inv{x}-\ell}^2,
 \qquad
 \nabla\psi_\alpha^t(\inv{x})+\frac{\inv{x}-\ell}{c}=0.
\end{equation}
The second identity in \eqref{penalized-envelope-representation} is not an
additional assumption.  For fixed $(t,\ell,\alpha)$, eliminating the dual
variables reduces the local saddle subproblem to
\[
 \inv{x}\in\operatorname*{argmin}_{x\ \mathrm{near}\ \hat x(t)}
 \left\{\psi_\alpha^t(x)+\frac1{2c}\norm{x-\ell}^2\right\}.
\]
Since the primal variable is unconstrained, its first-order optimality
condition is
\[
 0=\left.\nabla_x\left(
 \psi_\alpha^t(x)+\frac1{2c}\norm{x-\ell}^2\right)\right|_{x=\inv{x}}
 =\nabla\psi_\alpha^t(\inv{x})+\frac{\inv{x}-\ell}{c}.
\]

\begin{enumerate}
\item
\emph{Gradient formula.}
The implicit solution is locally Lipschitz, so the value in
\eqref{penalized-envelope-representation} is locally Lipschitz.  Wherever
$\inv{x}$ is differentiable, differentiating the first identity in
\eqref{penalized-envelope-representation} gives
\begin{equation}\label{eq:regularized-envelope-chain-rule}
\begin{aligned}
 \nabla_\ell\varphi_{c,\alpha}^t(\ell)
 &= (D_\ell\inv{x})^T\nabla\psi_\alpha^t(\inv{x})
    +\frac1c(D_\ell\inv{x}-I)^T(\inv{x}-\ell)\\
 &= (D_\ell\inv{x})^T
    \left(\nabla\psi_\alpha^t(\inv{x})
          +\frac{\inv{x}-\ell}{c}\right)
    +\frac{\ell-\inv{x}}c\\
 &=\frac{\ell-\inv{x}}c.
\end{aligned}
\end{equation}
The final equality follows from the second identity in
\eqref{penalized-envelope-representation}.
The right-hand side is continuous.  At every exceptional point, the
Clarke generalized gradient of the value is therefore the singleton
containing this vector.  Proposition~$2.2.4$ of \cite{FHC:83} then gives
strict differentiability, and the displayed continuous formula proves
that $\varphi_{c,\alpha}^t$ is $\mathcal C^1$.

For $\alpha=0$, this conclusion is precisely
Proposition~\ref{prop::gradient_computation}.  The local implicit
selection in Proposition~\ref{prop:implicit-solution-map} agrees with the
unregularized KKT selection there, so the two gradient formulas are
consistent at $\alpha=0$.

\item
\emph{Uniform local strong convexity.}
We first prove the claim for $\alpha>0$ and then pass to
$\alpha=0$. We account explicitly for changes of active set. Define the reference
and current positive-multiplier sets
\[
 \mathcal A_0(t)=\{i:\hat q^i(t)>0\},
 \qquad
 \mathcal A_+(t,\ell,\alpha)
 =\{i:(\inv{q}(t,\ell,\alpha))^i>0\}.
\]
By continuity, after shrinking the neighborhood,
$\mathcal A_0(t)\subseteq\mathcal A_+(t,\ell,\alpha)$.  Set
\begin{align*}
H&=\nabla^2f_t(\inv{x})
 +\sum_i(\inv{p})^i\nabla^2h_t^i(\inv{x})
 +\sum_i(\inv{q})^i\nabla^2g_t^i(\inv{x}),\\
M_{\mathcal A}&=
 \bigl[\nabla h_t(\inv{x})\ \ 
       \nabla g_t^{\mathcal A}(\inv{x})\bigr],
 \qquad
N_{\mathcal A}=H+\frac{M_{\mathcal A}M_{\mathcal A}^T}{\alpha^2}.
\end{align*}
On every smooth active-set piece, differentiate the equality equations
and  those inequality equations indexed by the \emph{current}
set $\mathcal A=\mathcal A_+(t,\ell,\alpha)$.  The inactive multiplier
derivatives vanish.  The resulting block system is
\[
 \begin{bmatrix}
 H+I/c&M_{\mathcal A}\\
 M_{\mathcal A}^T&-\alpha^2I
 \end{bmatrix}
 \begin{bmatrix}
 D_\ell\inv{x}\\D_\ell(\inv{p},\inv{q}_{\mathcal A})
 \end{bmatrix}
 =
 \begin{bmatrix}I/c\\0\end{bmatrix}.
\]
Its Schur complement gives
\begin{equation}\label{current-active-sensitivity}
 D_\ell\inv{x}=(I+cN_{\mathcal A})^{-1},
 \qquad
 D_\ell\nabla\varphi_{c,\alpha}^t
 =N_{\mathcal A}(I+cN_{\mathcal A})^{-1}.
\end{equation}

The quadratic-form implication of Debreu~\cite[Theorem~$3$]{GD:52}
used below states that, for a symmetric matrix $B$ and any matrix $M$,
\begin{equation}\label{eq:debreu-implication}
\begin{aligned}
 &d^TBd>0\quad\text{for every }0\ne d\in\ker(M^T)\\[-1mm]
 &\qquad\Longrightarrow\quad
 \exists\rho_0>0\ \text{such that}\\[-1mm]
 &\qquad\qquad B+\rho MM^T\succ0
 \quad\text{for every }\rho\geq\rho_0.
\end{aligned}
\end{equation}

Let
$M_0=[\nabla h_t(\inv{x})\ \nabla g_t^{\mathcal A_0(t)}(\inv{x})]$.
At $(\ell,\alpha)=(\hat x(t),0)$,
Assumption~\ref{assumption::seven} gives
$d^THd>0$ for every $0\ne d\in\ker(M_0^T)$.  Applying
\eqref{eq:debreu-implication} therefore makes
$H+\rho M_0M_0^T$ positive definite for every sufficiently large
$\rho$.  Positive definiteness is open, so continuity implies that, on
a smaller parameter neighborhood, there are constants
$\rho_0,\gamma>0$ such that
\begin{equation}\label{uniform-debreu-bound}
 H+\rho_0M_0M_0^T\succeq\gamma I.
\end{equation}
Choose $\alpha$ so that $1/\alpha^2\geq\rho_0$.  Since
$\mathcal A_0(t)\subseteq\mathcal A$, the extra current-active columns
add a positive-semidefinite matrix, and
\[
 N_{\mathcal A}
 \succeq H+\frac{M_0M_0^T}{\alpha^2}
 \succeq\gamma I.
\]
Recalling \eqref{current-active-sensitivity}, every classical Jacobian
of $\nabla\varphi_{c,\alpha}^t$ on a smooth active-set piece satisfies
\[
 D_\ell\nabla\varphi_{c,\alpha}^t
 =N_{\mathcal A}(I+cN_{\mathcal A})^{-1}
 \succeq\frac{\gamma}{1+c\gamma}I=:\sigma I.
\]
To see the last inequality directly, diagonalize the symmetric matrix
$N_{\mathcal A}$.  Each eigenvalue $\lambda\geq\gamma$ is mapped to
$\lambda/(1+c\lambda)$, and the scalar function
$\lambda\mapsto\lambda/(1+c\lambda)$ is increasing on $[0,\infty)$.
There are only finitely many local complementarity faces.  At a switching
point, the Clarke generalized Jacobian is the convex hull of limits of
the corresponding current-active Jacobians.  Equivalently, the generalized
Hessian of the squared-hinge penalty assigns a selector in $[0,1]$ to a
weak constraint, which only adds a positive-semidefinite rank-one term.
The same lower bound is preserved by limits and convex combinations.  The
nonsmooth mean-value theorem
\cite[Proposition~$2.6.5$]{FHC:83} therefore yields
\[
 \left\langle
 \nabla\varphi_{c,\alpha}^t(\ell_1)
 -\nabla\varphi_{c,\alpha}^t(\ell_0),
 \ell_1-\ell_0\right\rangle
 \geq\sigma\norm{\ell_1-\ell_0}^2
\]
whenever the segment lies in the chosen neighborhood.  Hence the envelope
is locally $\sigma$-strongly convex.  This argument allows weakly active
constraints to enter or leave the current positive-multiplier set.

\begingroup
To make the preceding constants uniform, fix a covering time $t_0$ and
use the persistent strongly active set
\[
 \mathcal S_0=\{i:\hat q^i(t_0)>0\}.
\]
The multipliers indexed by $\mathcal S_0$ remain positive on a sufficiently
small time neighborhood of $t_0$. Continuity of the implicit solution then
gives
$\mathcal S_0\subseteq\mathcal A_+(t,\ell,\alpha)$ throughout a common
parameter neighborhood for nearby $t$ and small $\alpha>0$\auditfix{,
using that, by local uniqueness in
Proposition~\ref{prop:implicit-solution-map}, the implicit selection at
$(t,\hat x(t),0)$ coincides with the reference triple, so
$(\inv q)^i$ is near $\hat q^i(t)>0$ for $i\in\mathcal S_0$}.  Apply
\eqref{eq:debreu-implication} to the reference values $H_0$ and
$M_0=M_{\mathcal S_0}$ at $t_0$.  Assumption~\ref{assumption::seven}
is precisely its null-space hypothesis, so there are
$\rho_0,\gamma_0>0$ such that
\[
 H_0+\rho_0M_0M_0^T\succeq2\gamma_0I.
\]
Continuity then preserves
$H+\rho_0M_{\mathcal S_0}M_{\mathcal S_0}^T\succeq\gamma_0I$
on a sufficiently small time and parameter neighborhood. Since every
current active set contains $\mathcal S_0$, its Gram term dominates, so the preceding sensitivity argument supplies a
common positive convexity modulus on the whole patch. This construction
does not require the complete reference active set $\mathcal A_0(t)$ to
remain unchanged when a multiplier crosses zero. A finite subcover of
$[0,t_f]$, followed by taking the smallest modulus and radius, gives
uniform constants for $t\in[0,t_f]$ and
$0<\alpha\leq\bar\alpha$.
\endgroup
By
Propositions~\ref{prop:implicit-solution-map} and
\ref{prop::gradient_computation}, the map
\[
 G(t,\ell,\alpha)=
 \frac{\ell-\inv{x}(t,\ell,\alpha)}c
\]
is continuous also at $\alpha=0$.  Letting $\alpha\downarrow0$ in the
strong-monotonicity inequality shows that $G(t,\cdot,0)$ has the same
local lower bound.

\item
\emph{Critical-point branch.}
Moreover,
$G(t,\hat x(t),0)=0$.  The strong form of the Implicit Function Theorem
for monotone mappings \cite[Theorem~$1$H.$3$]{ALD-RTR:14} now gives a
locally unique, locally Lipschitz root $\hat x_\alpha(t)$ near
$\hat x(t)$.  The equation $G(t,\hat x_\alpha(t),\alpha)=0$ is exactly
\[
 \hat x_\alpha(t)
 =\inv{x}(t,\hat x_\alpha(t),\alpha).
\]
\end{enumerate}
\end{proof}

\section{Derivation of the frozen landscape terms in
Figure~\ref{fig:tikhonov-tradeoff-mechanism}}
\label{app:figure8-landscapes}
At $t_0=20$, one has $s(t_0)=0$, and the objective is
\begin{equation}
 f(x_1,x_2)
 =-\frac32x_1^2+\frac14x_1x_2
   +\frac12(x_2^2-1)^2.
 \label{eq:appendix-frozen-objective}
\end{equation}
The equality constraint is $x_1=0$, so the original feasible slice is
\begin{equation}
 \psi^0(x_2)=f(0,x_2)=\frac12(x_2^2-1)^2.
 \label{eq:appendix-original-slice}
\end{equation}
Subtracting the minimum of any of the following scalar functions only shifts
its graph vertically and does not change its wells or convexity.

\paragraph{Tikhonov term}
For $\alpha>0$, the frozen Tikhonov Lagrangian is
\begin{equation}
 L^{\rm T}_{\nu,\alpha}(x_1,x_2,p)
 =f(x_1,x_2)+px_1+\frac\nu2(x_1^2+x_2^2)
  -\frac{\alpha^2}{2}p^2.
 \label{eq:appendix-tikhonov-lagrangian}
\end{equation}
Maximization over $p$ gives $p^\sharp=x_1/\alpha^2$.  Substitution and
collection of the terms in $x_1$ give
\begin{align}
 \max_p L^{\rm T}_{\nu,\alpha}(x_1,x_2,p)
 &=\frac12(x_2^2-1)^2+\frac\nu2x_2^2
   +\frac{D_{\rm T}}2x_1^2+\frac14x_1x_2,
 \label{eq:appendix-tikhonov-after-p}
\end{align}
where
\begin{equation}
 D_{\rm T}:=\alpha^{-2}+\nu-3.
 \label{eq:appendix-DT}
\end{equation}
Assuming $D_{\rm T}>0$, minimization over $x_1$ gives
\begin{equation}
 x_1^\sharp(x_2)=-\frac{x_2}{4D_{\rm T}}.
 \label{eq:appendix-tikhonov-x1}
\end{equation}
Consequently,
\begin{align}
 \psi^{\rm T}_{\nu,\alpha}(x_2)
 &:=\min_{x_1}\max_p L^{\rm T}_{\nu,\alpha}(x_1,x_2,p) \notag\\
 &=\frac12(x_2^2-1)^2+\frac\nu2x_2^2
   -\frac{x_2^2}{32D_{\rm T}}.
 \label{eq:appendix-tikhonov-landscape}
\end{align}
We now specialize to $\alpha=0$.  As $\alpha\downarrow0$,
$D_{\rm T}\to\infty$, $x_1^\sharp(x_2)\to0$, and
\begin{equation}
 \psi^{\rm T}_{\nu,0}(x_2)
 :=\lim_{\alpha\downarrow0}\psi^{\rm T}_{\nu,\alpha}(x_2)
 =\frac12(x_2^2-1)^2+\frac\nu2x_2^2.
 \label{eq:appendix-tikhonov-undamped-landscape}
\end{equation}
The same expression follows directly at $\alpha=0$: if $x_1\ne0$, then
$\sup_{p\in\mathbb R}px_1=+\infty$, so maximization over $p$ enforces the
equality slice $x_1=0$.

\paragraph{Proposed regularization}
For a fixed center $\ell=(\ell_1,\ell_2)$, the frozen inner Lagrangian is
\begin{align}
 L^{\rm P}_{c,\alpha}(x_1,x_2,p;\ell)
 &=f(x_1,x_2)+px_1
   +\frac{(x_1-\ell_1)^2+(x_2-\ell_2)^2}{2c}
   -\frac{\alpha^2}{2}p^2.
 \label{eq:appendix-proximal-lagrangian}
\end{align}
For $\alpha>0$, maximization over $p$ gives
$p^\sharp=x_1/\alpha^2$.  Substitution and collection of the terms in $x_1$
give
\begin{align}
 \max_p L^{\rm P}_{c,\alpha}(x_1,x_2,p;\ell)
 &=\frac12(x_2^2-1)^2+\frac{(x_2-\ell_2)^2}{2c}
   +\frac{\ell_1^2}{2c} \notag\\
 &\quad+\frac{D_{\rm P}}2x_1^2
   +\left(\frac{x_2}{4}-\frac{\ell_1}{c}\right)x_1,
 \label{eq:appendix-proximal-after-p}
\end{align}
where
\begin{equation}
 D_{\rm P}:=\alpha^{-2}+c^{-1}-3>0
 \label{eq:appendix-DP}
\end{equation}
for the parameters used in Figure~\ref{fig:tikhonov-tradeoff-mechanism}.
Completing the square, or differentiating with respect to $x_1$, gives
\begin{equation}
 x_1^\sharp(x_2;\ell)
 =\frac{\ell_1/c-x_2/4}{D_{\rm P}}.
 \label{eq:appendix-proximal-x1}
\end{equation}
Substitution yields the reduced proximal landscape
\begin{align}
 \psi^{\rm P}_{c,\alpha}(x_2;\ell)
 &:=\min_{x_1}\max_p
 L^{\rm P}_{c,\alpha}(x_1,x_2,p;\ell) \notag\\
 &=\frac12(x_2^2-1)^2
   +\frac{(x_2-\ell_2)^2}{2c}
   -\frac{(x_2/4-\ell_1/c)^2}{2D_{\rm P}}
   +\frac{\ell_1^2}{2c}.
 \label{eq:appendix-proximal-landscape}
\end{align}
The final term is independent of $x_2$.  Thus, writing $\doteq$ for equality
up to an $x_2$-independent additive constant,
\begin{equation}
 \psi^{\rm P}_{c,\alpha}(x_2;\ell)
 \doteq
 \frac12(x_2^2-1)^2
 +\frac{(x_2-\ell_2)^2}{2c}
 -\frac{(x_2/4-\ell_1/c)^2}
 {2(\alpha^{-2}+c^{-1}-3)}.
 \label{eq:appendix-proximal-landscape-shifted}
\end{equation}

The high-level distinction is the center of the quadratic convexity added
directly in the nonconvex coordinate:
\begin{equation}
 \boxed{
 \begin{array}{c|c}
  \text{Tikhonov}
   & \displaystyle \frac{\nu}{2}x_2^2 \\[3mm]
  \text{Moving proximal center}
   & \displaystyle \frac{1}{2c}(x_2-\ell_2)^2
 \end{array}}
 \label{eq:appendix-high-level-comparison}
\end{equation}
Tikhonov convexity pulls toward the origin, whereas the proximal
convexity pulls toward the moving center $\ell_2$, which can remain close to
the selected well.  This is the mechanism emphasized in
Figure~\ref{fig:tikhonov-tradeoff-mechanism}.

\bibliographystyle{siamplain}%
\bibliography{references}

\end{document}